\title{Stability of geodesics in the Brownian map}
\author{Omer Angel \and Brett Kolesnik \and Gr{\'e}gory Miermont}
\date{}
  \noindent\textsc{Department of Mathematics, University of British Columbia} \par
  \noindent\textsc{Unit{\'e} de Math{\'e}matiques Pures et Appliqu{\'e}es, \'{E}cole Normale Sup\'{e}rieure de Lyon 
\& Institut Universitaire de France} \par
\theoremstyle{plain}
\newtheorem{lem}{Lemma}
\newtheorem{pro}[lem]{Proposition}
\newtheorem{thm}[lem]{Theorem}
\newtheorem{cor}[lem]{Corollary}
\theoremstyle{definition}
\newtheorem*{defn}{Definition}
\newtheorem*{con}{Conjecture}
\newcommand{\g}{\gamma}
\newcommand{\lam}{\lambda}
\newcommand{\eps}{\epsilon}
\renewcommand{\d}{d}
\newcommand{\m}{M}
\newcommand{\ee}{{\bf e}}
\newcommand{\Te}{{\cal T}_\ee}
\newcommand{\TZ}{{\cal T}_Z}
\newcommand{\dd}{d_\ee}
\newcommand{\pp}{p_\ee}
\newcommand{\PP}{{\bf p}}
\newcommand{\geo}{G}
\newcommand{\mul}{S}
\newcommand{\cut}{C}
\newcommand{\net}{N}
\renewcommand{\star}{Z}
\newcommand{\pair}{P}
\newcommand{\NN}{{\mathbb N}}
\newcommand{\RR}{{\mathbb R}}
\newcommand{\CC}{{\mathbb C}}
\newcommand{\HH}{{\mathbb H}}
\renewcommand{\SS}{{\mathbb S}}
\renewcommand{\H}{\dim}
\newcommand{\M}{\mathrm{Dim}\,}
\newcommand{\pack}{\dim_{\rm P}}
\newcommand{\uM}{\overline{\mathrm{Dim}}\,}
\newcommand{\lM}{\underline{\mathrm{Dim}}\,}
\newcommand{\tto}{\rightrightarrows}
\begin{document}
%
%
%
%
%
%
%
%
%

\maketitle

\begin{abstract}
The Brownian map is a random geodesic metric space 
arising as the scaling limit of random planar maps. We 
strengthen the so-called \emph{confluence of geodesics} 
phenomenon observed at the root of the map, and with this, 
reveal several properties of its rich geodesic structure.

Our main result is the continuity of the cut locus at typical 
points. A small shift from such a point results in a small, 
local modification to the cut locus. Moreover, the cut locus 
is uniformly stable, in the sense that any two cut loci coincide 
outside a closed, nowhere dense set of zero measure.

We obtain similar stability results for the set of points inside 
geodesics to a fixed point. Furthermore, we show that the 
set of points inside geodesics of the map is of first Baire 
category. Hence, most points in the Brownian map are 
endpoints.

Finally, we classify the types of geodesic networks which 
are dense. For each $k\in\{1,2,3,4,6,9\}$, there is a dense 
set of pairs of points which are joined by networks of exactly 
$k$ geodesics and of a specific topological form. We find the 
Hausdorff dimension of the set of pairs joined by each type 
of network. All other geodesic networks are nowhere dense.
\end{abstract}

%
%

\section{Introduction}\label{S_intro}

A universal scaling limit of random planar maps has recently 
been identified by Le Gall~\cite{LG13} (triangulations and 
$2k$-angulations, $k>1$) and Miermont~\cite{M13} 
(quadrangulations) as a random geodesic metric space called 
the \emph{Brownian map} $(\m,\d)$. In this work, we establish 
properties of the Brownian map which are a step towards a 
complete understanding of its geodesic structure.

The works of Cori and Vauquelin~\cite{CV81} and 
Schaeffer~\cite{S98} describe a bijection from well-labelled plane 
trees to rooted planar maps. The Brownian map is obtained as 
a quotient of Aldous'~\cite{A91,A93} \emph{continuum random tree}, 
or CRT, by assigning Brownian labels to the CRT and then 
identifying some of its non-cut-points, or \emph{leaves}, according 
to a continuum analogue of the \emph{CVS-bijection} 
(see Section~\ref{S_BM}). The resulting object is homeomorphic 
to the sphere $\SS^2$ (Le Gall and Paulin~\cite{LGP08} and 
Miermont~\cite{M08}) and of Hausdorff dimension 4 
(Le Gall~\cite{LG07}) and is thus in a sense a random, 
fractal, spherical surface. 

Le Gall~\cite{LG10} classifies the geodesics to the root, which is a 
certain distinguished point of the Brownian map 
(see Section~\ref{S_BM}), in terms of the label
process on the CRT (see Section~\ref{S_simple}). Moreover, the
Brownian map is shown to be invariant in distribution under uniform
re-rooting from the volume measure $\lam$ on $\m$ (see Section
\ref{S_BM}). Hence, geodesics to typical points exhibit a similar
structure as those to the root. It thus remains to investigate
geodesics from special points of the Brownian map.

\subsection{Geodesic nets}\label{S_net}

A striking consequence of Le Gall's description of geodesics to the 
root is that any two such geodesics are bound to meet and then 
coalesce before reaching the root, a phenomenon referred to as the
\emph{confluence of geodesics} (see Section~\ref{S_at}).
In fact, the set of points in the relative interior of a geodesic to the 
root is a small subset which is homeomorphic to an $\RR$-tree and 
of Hausdorff dimension 1 (see~\cite{LG10}).

\begin{defn}
  We call a subset $\g\subset\m$ a \emph{geodesic segment} if
  $(\g,\d)$ is isometric to a compact interval. The \emph{extremities} of the
  geodesic segment are the images, say $x$ and $y$, of the extremities
  of the source interval, and we say that $\g$ is a geodesic segment
  between $x$ and $y$ (or from $x$ to $y$ if we insist on
  distinguishing one orientation of $\g$). 
\end{defn}

We will often denote a particular geodesic segment between $x,y\in\m$
as $[x,y]$, and denote its relative interior by $(x,y)=[x,y]-\{x,y\}$.
(Since there might be more than one such geodesic segment, we will
be careful in lifting any ambiguity that might arise from this
notation.) We define $[x,y)$ and $(x,y]$ similarly. 

\begin{defn}
  For $x\in\m$, the \emph{geodesic net} of $x$, denoted $\geo(x)$, 
  is the set of points $y\in\m$ that are contained in the relative 
  interior of a geodesic segment to $x$.
\end{defn}

Although geodesics to the root of the Brownian map 
are understood,
the structure of geodesics to general points 
remains largely mysterious. 
Indeed, the main obstacle in establishing the
existence of the Brownian map
is to relate a geodesic
between a pair of typical points
to geodesics to the root.
A compactness argument of Le Gall~\cite{LG07} 
yields scaling limits of planar maps 
along subsequences, however the question
of uniqueness remained unresolved for some time.
Finally, making use of 
Le Gall's description of geodesics
to the root, Le Gall~\cite{LG13} and 
Miermont~\cite{M13} show that distances to the root
provide enough information to characterize
the Brownian map metric.
Let $\g$ be 
a geodesic between points selected uniformly according 
to $\lambda$. (By the confluence of geodesics phenomenon,
the root of the map is almost surely disjoint from $\g$.)
In~\cite{LG13,M13} the set of points
$z\in\g$ such that the relative interior of any
geodesic from $z$ to the root is disjoint from $\g$
is shown to be small compared to $\g$.
Hence, roughly speaking, 
``most'' points in ``most'' geodesics of the Brownian map 
are in a geodesic to the root.
(See the discussion around equation (2) in \cite{LG13} 
and \cite[Section 2.3]{M13} for precise statements.)

In this work, we show that for {\it any} two points $x,y\in\m$, 
points which are in a geodesic to $x$ but not in a geodesic to 
$y$ are exceptional. Hence, to a considerable extent, 
the geodesic 
structure of the Brownian map is similar as viewed from any 
point of the map, providing further evidence that it is, 
to quote Le Gall~\cite{LG14}, 
  ``very regular in its irregularity.''

\begin{thm}\label{T_net-nwd}
 Almost surely, for all $x,y\in\m$, 
 $\geo(x)$ and $\geo(y)$ coincide outside a 
 closed, nowhere dense set
 of zero $\lambda$-measure. 
\end{thm}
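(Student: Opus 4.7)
The strategy is to leverage the near-$\RR$-tree structure of the Brownian map. In an $\RR$-tree the set of non-leaf points is independent of the basepoint, so $\geo(x) = \geo(y)$ identically; in the Brownian map one should instead see $\geo(x)$ and $\geo(y)$ coincide up to a small, localised exceptional set that encodes the discrepancy between the tree viewed from $x$ and from $y$.

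First I would handle $\lam$-typical pairs $(x,y)$ using re-rooting invariance combined with Le Gall's description of geodesics to the root \cite{LG10}. Almost surely, the closure of the geodesic net to the root is an $\RR$-tree of Hausdorff dimension $1$, hence of zero $\lam$-measure, and re-rooting transfers this structure to $\geo(x)$ for $\lam$-a.e.\ $x$. To quantify coincidence, I would prove a strengthened confluence statement: for typical $(x,y)$ and any $z \in \geo(y)$, if $z$ is interior to some geodesic from $y$ to a point $w$, then provided $z$ does not lie on the geodesic $[x,y]$ and is not an exceptional branching point, the geodesic from $x$ to $w$ coalesces with that from $y$ to $w$ before reaching $z$, so $z \in \geo(x)$ as well. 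A symmetric argument controls $\geo(x) \setminus \geo(y)$. The points escaping these arguments lie on $[x,y]$ together with an a.s.\ $\lam$-null branching set, whose closure furnishes the desired closed nowhere dense null exceptional set $E_{x,y}$ for typical pairs.

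The main obstacle is upgrading this analysis from $\lam$-typical pairs to every pair on a single almost-sure event. The plan is to take sequences $x_n \to x$ and $y_m \to y$ of typical approximants and establish upper semicontinuity of $\geo(\cdot)$: a point $z$ interior to geodesics from $x_n$ to points $w_n$ bounded away from $x_n$ should remain, in the limit, interior to a geodesic from $x$. Combined with the typical-pair analysis applied to each $(x_n,y_m)$, this produces an exceptional set for $(x,y)$ built from limits of the $E_{x_n,y_m}$'s together with $[x,y]$; the key is to show that this limiting set remains closed, nowhere dense, and of zero $\lam$-measure, rather than degenerating into the countable union of typical-pair sets, which could a priori be dense.

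The hardest technical step is precisely this uniformity in $x,y$, since ordinary re-rooting almost-sure statements are indexed only by typical basepoints. One must rule out pathological exceptional basepoints where continuity of $\geo(\cdot)$ fails, and so upgrade confluence of geodesics from the root-based statement of \cite{LG10} to one valid simultaneously at every point of $\m$, with uniform control on the exceptional points of the map (pinch points, multi-geodesic endpoints, high-multiplicity points of the geodesic net). I expect the bulk of the argument to lie in this strengthened uniform confluence, which is also what powers the stability conclusions announced in the abstract.
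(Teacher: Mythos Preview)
Your plan misses the key simplification that makes the paper's proof almost immediate. You organise everything around confluence \emph{at} the basepoints $x$ and $y$ (or their typical approximants), and then struggle to pass from typical pairs to arbitrary pairs via semicontinuity of $x\mapsto\geo(x)$. The paper instead applies the strengthened confluence (Proposition~\ref{P_conf-point}) at a \emph{third}, typical point $u\in T-\{x,y\}$: there is an open neighbourhood $U_u\ni u$ and a confluence point $u_0\notin U_u$ such that every geodesic from any $v\in U_u$ to any point outside a fixed neighbourhood of $u$ --- in particular to either of the arbitrary points $x,y$ --- passes through $u_0$. From this one reads off directly that $\geo(x)\cap U_u=\geo(y)\cap U_u$: if $v\in U_u$ lies in the interior of some geodesic to $x$, take a nearby point $v'\in U_u$ on that geodesic; then $[v',x]$ passes through $u_0$, and splicing $[v',u_0]$ with any $[u_0,y]$ gives a geodesic to $y$ through $v$. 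Taking the union over the dense, full-measure set $T-\{x,y\}$ yields an open dense set of full measure on which $\geo(x)$ and $\geo(y)$ agree, and its complement is the required closed, nowhere dense, $\lambda$-null exceptional set.

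The point is that no property of $x$ or $y$ is ever used; all the ``typicality'' is spent on the auxiliary points $u$. Your approximation-and-semicontinuity route is therefore unnecessary, and the step you flag as hardest --- upgrading confluence to hold simultaneously at every basepoint and controlling limits of the exceptional sets $E_{x_n,y_m}$ --- is exactly what the paper sidesteps. Your approach might in principle be pushed through, but it would amount to reproving a uniform-in-basepoint confluence that is strictly stronger than what is needed here (and which the paper leaves as an open problem in the discussion of ghost geodesics).
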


Furthermore, for most points $x\in\m$, the effect of small 
perturbations of $x$ on $\geo(x)$ is localized.

\begin{thm}\label{T_net-cts}
  Almost surely, the function $x\mapsto\geo(x)$ is continuous 
  almost everywhere in the following sense.

  For $\lam $-almost every $x\in\m$, for any neighbourhood 
  $N$ of $x$, there is a sub-neighbourhood $N'\subset N$ 
  so that 
  $\geo(x')-N$ is the same for all $x'\in N'$.
  \end{thm}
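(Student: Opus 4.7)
The plan is to combine Le Gall's confluence of geodesics, transferred from the root to $\lam$-typical points via the uniform re-rooting invariance of the Brownian map, with a localization argument that turns Theorem~\ref{T_net-nwd} into a genuinely local statement. Fix a $\lam$-typical $x\in\m$.

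The key technical input I would establish is a \emph{uniform} confluence statement at $x$: for every $\eps>0$ there exist $\delta\in(0,\eps)$ and a point $c\in\m$ with $d(c,x)\in[\delta,\eps]$ such that every geodesic from a source $w\in\m-B(x,\eps)$ to any target $x'\in B(x,\delta)$ passes through $c$, and such that the initial portion of such a geodesic, up to $c$, depends only on $w$ and not on the target. I would derive this by iterating the classical confluence statement at $x$ at a sequence of decreasing scales, and then using the CRT label-process description of simple geodesics to show that all geodesics entering $B(x,\eps)$ from outside are channelled through the same bottleneck, regardless of which nearby target inside $B(x,\delta)$ they are heading to.

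Given such a uniform confluence, for any neighborhood $N$ of $x$ I would pick $\eps$ small enough that $B(x,2\eps)\subset N$ and set $N':=B(x,\delta)$. To show $\geo(x)-N=\geo(x')-N$ for every $x'\in N'$, take $z\in\geo(x)-N$ in the interior of some geodesic $[w,x]$ and pick $w''$ strictly between $w$ and $z$ with $w''\notin B(x,\eps)$ (possible since $z\notin B(x,\eps)$ and $z\ne w$). By uniform confluence, $[w'',x]$ passes through $c$, and since $[c,x]\subset B(x,\eps)\subset N$ while $z\notin N$, we must have $z\in[w'',c]$. Applying the uniform confluence again, this time with target $x'\in B(x,\delta)$ and the same source $w''$, the geodesic from $w''$ to $x'$ has the same initial segment $[w'',c]$, so $z\in\geo(x')$. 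The reverse inclusion is identical, using that $[c,x']\subset B(x,2\eps)\subset N$.

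The main obstacle, I expect, is establishing the uniform confluence. Classical confluence provides a bottleneck $c$ only for geodesics to the single fixed target $x$, while the argument above requires the same $c$ to serve every nearby target $x'\in B(x,\delta)$ simultaneously. Proving this amounts to showing that the tree of branches along which geodesics arrive at $x$ does not split further when the endpoint is perturbed at scales below $\delta$. I would handle this on the CRT side: shifting $x$ to $x'$ corresponds to a small shift of the re-rooting mark, and the label-process characterization of simple geodesics then reduces the stability of the confluence structure to a continuity property of the Brownian labels, which follows from standard modulus-of-continuity estimates for the Brownian snake.
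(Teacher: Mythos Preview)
Your overall architecture is exactly right: the key intermediate step is what you call ``uniform confluence'' --- for a typical $x$ and any neighbourhood $N$, there is a sub-neighbourhood $N'$ and a single confluence point through which \emph{every} geodesic from $N^c$ to \emph{any} $x'\in N'$ must pass. This is precisely the paper's Proposition~\ref{P_conf-point}, and once it is in hand your deduction of the theorem is essentially the paper's (your extra clause that ``the initial portion up to $c$ depends only on $w$'' is not needed and may well be false when $w$ has several geodesics to $c$; the correct argument just uses that $d(w,x')=d(w,c)+d(c,x')$, so any $[w,c]$ segment concatenates with any $[c,x']$ to give a geodesic).

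The genuine gap is in how you propose to \emph{prove} the uniform confluence. You suggest working on the CRT side, treating the perturbation $x\mapsto x'$ as ``a small shift of the re-rooting mark'' and reducing to continuity of the Brownian labels. This does not work as stated: the label-process description of geodesics applies only to geodesics \emph{to the root}. Re-rooting invariance is a distributional statement about a single uniformly chosen point; it says nothing about geodesics to a fixed atypical $x'$ in a given realization, and you need the conclusion simultaneously for \emph{every} $x'\in N'$, most of which are atypical (they may lie in the cut locus, on a geodesic, etc.). There is no known label-process description of geodesics to such points --- indeed, understanding geodesics between non-root points was the central difficulty in proving uniqueness of the Brownian map. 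Iterating classical confluence at decreasing scales does not help either, since each application still only controls geodesics targeting $x$ itself.

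The paper's route to Proposition~\ref{P_conf-point} is quite different and essentially topological. One first proves (Lemma~\ref{L_parallel}) that any sequence of geodesic segments converging to a sub-segment of a geodesic from the root in fact converges \emph{strongly}, i.e.\ eventually contains any compact sub-segment. The proof uses that $\m$ is homeomorphic to $\SS^2$: one builds geodesics between nearby typical points on either side of $\g$ (Lemma~\ref{L_u}), and then traps the approximating geodesics between them via the Jordan Curve Theorem, forcing coalescence. This yields Lemma~\ref{L_near} (geodesics from $x'$ near $x$ to $y'$ near $y$ coincide with some $[x,y]$ outside small neighbourhoods), and combining with classical confluence and compactness gives the uniform confluence point. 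The CRT/label description enters only indirectly, through the regularity of pairs $(x,y)$ with $x$ typical and through Lemma~\ref{L_at}.
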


The \emph{uniform infinite planar triangulation}, or UIPT, 
introduced by Angel and Schramm~\cite{AS03}, is a random lattice 
which arises as the \emph{local limit} of random triangulations 
of the sphere. The case of quadrangulations, giving rise to 
the UIPQ, is due to Krikun~\cite{K05}.
We remark that Theorem~\ref{T_net-cts} is in a sense a 
continuum analogue to a result of Krikun~\cite{K08} 
(see also Curien, M{\'e}nard, and Miermont~\cite{CMM13})  
which shows that the ``Schaeffer's tree'' of the UIPQ only 
changes locally after relocating its root. 

Next, we find that the union of all geodesic nets 
is relatively small.

\begin{defn}
  Let $F=\bigcup_{x\in\m}\geo(x)$ denote the set of 
  points in the relative interior of a geodesic in $(\m,\d)$. 
  We refer to $F$ as the \emph{geodesic framework} and 
  $E=F^c$ as the \emph{endpoints of the Brownian map}.
\end{defn}

\begin{thm}\label{T_frame}
  Almost surely, the geodesic framework of the Brownian 
  map, $F\subset\m$, is of first Baire category. 
\end{thm}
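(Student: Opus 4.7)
The plan is to decompose $F$ as a countable union of closed sets, and then argue that each piece has empty interior using the fact that the volume measure $\lambda$ has full support on $\m$.

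For each integer $n \geq 1$, define
$$F_n = \bigl\{\, z \in \m : \exists\, x, y \in \m \text{ and a geodesic segment } [x,y] \ni z \text{ with } \d(z,x) \wedge \d(z,y) \geq 1/n \,\bigr\}.$$
Since every point in the relative interior of a geodesic segment has positive distance from both endpoints, $F = \bigcup_{n \geq 1} F_n$. I would first check that each $F_n$ is closed: given $z_k \to z$ witnessed by geodesic segments $[x_k, y_k] \ni z_k$, extract a subsequence with $x_k \to x$ and $y_k \to y$ by compactness of $\m$. The unit-speed parameterizations are uniformly Lipschitz, and $\d(x_k, y_k) \geq 2/n$, so Arzel\`a--Ascoli together with lower semi-continuity of length yields a limiting geodesic segment $[x,y] \ni z$ with $\d(z,x) \wedge \d(z,y) \geq 1/n$.

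The crux is to show $\lambda(F) = 0$ almost surely. I would argue that the root of the Brownian map is almost surely an endpoint of $\m$, using the strengthened confluence of geodesics at the root from Section~\ref{S_at}. If the root lay in the relative interior of a geodesic segment $[x,y]$, then the two sub-segments from $x$ and $y$ to the root would be geodesics to the root meeting \emph{only} at the root. But confluence forces any two geodesics to the root to coalesce strictly before reaching it, which contradicts $[x, y]$ being a single geodesic segment. By invariance of the Brownian map under uniform re-rooting from $\lambda$, a $\lambda$-uniform sample $Z$ is distributionally indistinguishable from the root, so $\mathbb{P}(Z \in F) = 0$. Fubini then gives $\mathbb{E}[\lambda(F)] = 0$, hence $\lambda(F) = 0$ almost surely.

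To finish, recall that $\lambda$ has full topological support on $\m$, so any closed subset of $\lambda$-measure zero has empty interior. Each $F_n$ is therefore nowhere dense, and $F = \bigcup_n F_n$ is of first Baire category.

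The main obstacle is the endpoint-at-root step: a priori there may be geodesic segments through the root that are not captured by Le Gall's classification of simple geodesics to the root. The strengthening of confluence established earlier in the paper is what excludes such exotic geodesics; once this is in hand, the remaining measure-theoretic and topological steps are routine.
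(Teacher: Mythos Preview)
Your proof is correct and takes a genuinely different route from the paper's. The paper observes that $F\subset\bigcup_n \star(1/n)$, where $\star(\eps)$ is the set of centres of geodesic $\eps$-stars with two rays, and shows each $\star(\eps)$ is nowhere dense by directly exhibiting, via the new Proposition~\ref{P_conf-point} (confluence \emph{near} typical points), a sub-neighbourhood of every typical point disjoint from $\star(\eps)$. You instead show your closed pieces $F_n$ have zero $\lambda$-measure---using only the classical confluence \emph{at} the root (Lemma~\ref{L_LG}) plus re-rooting invariance to conclude $\rho\notin F$ almost surely---and then invoke full support of $\lambda$ (Proposition~\ref{P_Dim-U}). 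Your approach is thus more elementary, bypassing the paper's main technical novelty entirely; the paper's route has the advantage of handling Theorem~\ref{T_locus} in the same stroke and of producing explicit open sets in the complement rather than going through measure. Your worry in the final paragraph is unfounded: Le~Gall's theorem already asserts that \emph{all} geodesics to the root are simple geodesics, so Lemma~\ref{L_LG} applies to any two geodesics ending at $\rho$ without exception, and the endpoint-at-root step needs nothing beyond that.
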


Hence, the endpoints of the Brownian map, $E\subset\m$, 
is a residual subset. This property of the 
Brownian map is reminiscent of a result of 
Zamfirescu~\cite{Z82}, which states that for 
most convex surfaces --- that is, for all surfaces in a 
residual subset of the Baire space of convex surfaces 
in $\RR^n$ endowed with the Hausdorff metric --- the 
endpoints form a residual set.

\subsection{Cut loci}\label{S_cut}

Recall that the cut locus of a point $p$ in a Riemannian 
manifold --- first examined by Poincar\'{e}~\cite{P05} --- is the 
set of points $q\neq p$ which are endpoints of maximal 
(minimizing) geodesics from $p$. This collection of points is 
more subtle than merely the set of points with multiple geodesics 
to $p$, and in fact, is generally the closure thereof 
(see Klingenberg~\cite[Section 2.1.14]{K95}). 

In the Brownian map this equivalence breaks completely. 
Indeed, 
almost all (in the sense of volume, by
the confluence of geodesics phenomenon and
invariance under re-rooting) 
and most (in the sense of Baire category, by Theorem~\ref{T_frame})
points are the end of a maximal geodesic, and every point is joined by 
multiple geodesics to a dense set of points 
(see the note after the proof of Proposition~\ref{P_mul}).
Moreover, whereas in the Brownian map 
there are points with multiple geodesics to the root which coalesce before 
reaching the root, 
in a Riemannian manifold
any (minimizing) geodesic which is not the unique geodesic between
its endpoints cannot be extended (see, for example, the 
``short-cut principle'' discussed in 
Shiohama, Shioya and Tanaka~\cite[Remark~1.8.1]{SST03}).

We introduce the following notions of cut locus for the 
Brownian map.

\begin{defn}  
  For $x\in \m$, the \emph{weak cut locus} of $x$, denoted $\mul(x)$,
  is the set of points $y\in\m$ with multiple geodesics to $x$. 
  The {\em strong cut locus} of $x$, 
  denoted $\cut(x)$, is the set of points $y\in\m$ to which 
  there are at least two geodesics from $x$ that are disjoint 
  in a neighbourhood of $y$. 
\end{defn}

We will see that for most points $x$, it holds that $\mul(x)=\cut(x)$
(Proposition \ref{P_S=C}).  However, in some sense, $\cut(x)$ is
better-behaved than $\mul(x)$ for the remaining exceptional points,
and we will 
argue in Section~\ref{S_cut-pf} below that $\cut(x)$
is more effective at capturing 
the essence of a cut-locus for the metric space
$(M,d)$.
 
The construction of the Brownian map as a quotient 
of the CRT gives a natural mapping from the CRT to the map. 
Let $\rho$ denote the root of the map. 
Cut-points of the CRT correspond to a dense subset 
$\mul(\rho)\subset \m$ of Hausdorff dimension 2 
(see~\cite{LG10}). Le Gall's description of geodesics reveals that 
$\mul(\rho)$ is almost surely
exactly the set of points with multiple geodesics to  
$\rho$ (see Section~\ref{S_simple}). 
More specifically, for any $y\in\m$, the number of connected 
components of $\mul(\rho)-\{y\}$ is precisely the number of 
geodesics from $y$ to $\rho$. 
This is similar to the case 
of a complete, analytic Riemannian surface homeomorphic 
to the sphere 
(see Poincar\'{e}~\cite{P05} and Myers~\cite{M35}) 
where the cut locus $\mul$ of a point $x$ is a tree 
and the number of ``branches'' emanating from a point in 
$\mul$ is exactly the number of geodesics to $x$.

Since the strong cut locus of the root of the Brownian map 
corresponds to the CRT minus its leaves --- that is, 
almost surely
$\mul(\rho)=\cut(\rho)$, where $\rho$ is the root 
(see Section~\ref{S_simple}) --- it is a fundamental 
subset of the map. 

We obtain analogues of 
Theorems~\ref{T_net-nwd},\ref{T_net-cts}
for the strong cut locus.

\begin{thm}\label{T_cut-nwd}
  Almost surely, for all $x,y\in\m$, $\cut(x)$ and $\cut(y)$ 
  coincide outside a closed, nowhere dense set of zero 
  $\lambda$-measure.
\end{thm}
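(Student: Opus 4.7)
The plan is to leverage Theorem~\ref{T_net-nwd} (stability of geodesic nets) by comparing the strong cut locus $\cut(x)$ to $\geo(x)$ at the level of the local geodesic structure. The key input is the confluence of geodesics, which, via the Brownian map's invariance under uniform re-rooting from $\lam$, holds at $\lam$-a.e. target $z\in\m$: geodesics from any fixed pair of basepoints $x,y$ to $z$ must coalesce into a finite number of bundles that approach $z$ along well-defined incoming directions, a finite local ``tangent tree'' at $z$.

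First I would establish the measure-zero conclusion. Fix $z$ outside a $\lam$-null set on which confluence fails from $x$ or from $y$. Every geodesic from $x$ to $z$ must, in a sufficiently small neighbourhood of $z$, merge with some geodesic from $y$ to $z$ (and conversely), since both families are captured by the same finite tangent tree at $z$. Hence the number and positions of locally disjoint incoming geodesic segments at $z$ are independent of the basepoint, so $z\in\cut(x)\iff z\in\cut(y)$. Integrating against $\lam$ gives $\lam(\cut(x)\triangle\cut(y))=0$.

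For the closed and nowhere dense conclusion, the exceptional points $z$ where the identification can fail correspond to places where the branching pattern of geodesics is resolved differently when viewed from $x$ versus $y$. Such $z$ lie in the local difference of $\geo(x)$ and $\geo(y)$, and by Theorem~\ref{T_net-nwd} this discrepancy is controlled by a closed, nowhere dense, $\lam$-null set. Taking the closure of the collection of such $z$ (to absorb the distinction between $\cut$ and $\mul$) then furnishes the required closed, nowhere dense, null exceptional set containing $\cut(x)\triangle\cut(y)$.

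The main obstacle is the upgrade from $\lam$-a.e. pair $(x,y)$ to \emph{every} pair in $\m$. Confluence and the tangent-tree picture are typical-point statements, so quantifying uniformly over basepoints requires a continuity-in-basepoint argument in the spirit of Theorem~\ref{T_net-cts}, carried out for the stronger local-disjointness condition defining $\cut$ rather than mere multiplicity. Showing that exceptional basepoints (e.g.\ points in other geodesic nets) do not spoil the closed-nowhere-dense conclusion, and controlling the interaction between the split-location of two geodesics from $x$ with the corresponding split from $y$, is where the bulk of the technical work should concentrate.
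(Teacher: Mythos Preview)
Your measure-zero argument is essentially sound (indeed, for $\lam$-a.e.\ $z$ the pair $(z,w)$ is regular for every $w$ by Proposition~\ref{P_reg}, so all geodesics from any basepoint to $z$ coincide near $z$, whence $z\notin\cut(w)$ for every $w$). But the nowhere-dense part has a real gap. You assert that points in $\cut(x)\triangle\cut(y)$ ``lie in the local difference of $\geo(x)$ and $\geo(y)$'', so that Theorem~\ref{T_net-nwd} can be invoked. This reduction is not justified: $\geo(x)$ is only a \emph{set}, carrying no information about which geodesics pass through which points or how they branch at $z$. Even if $\geo(x)$ and $\geo(y)$ agree on a full neighbourhood of $z$, the two geodesic arcs from $x$ meeting at $z$ that witness $z\in\cut(x)$ could, from the viewpoint of $y$, be pieces of geodesics to $y$ terminating elsewhere, or of a single geodesic to $y$ passing through $z$; nothing then forces $z\in\cut(y)$. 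Taking closures does not repair this, and the link to Theorem~\ref{T_net-nwd} breaks.

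The paper's argument is quite different and sidesteps both this issue and what you call the ``main obstacle''. Rather than applying confluence at the target $z$, it applies the strengthened confluence of Proposition~\ref{P_conf-point} at a typical point $u\in T-\{x,y\}$: there exist an open neighbourhood $U_u\ni u$ and a point $u_0\notin U_u$ such that \emph{every} geodesic from any $v\in U_u$ to \emph{any} point outside a fixed larger neighbourhood (chosen to avoid $x$ and $y$) passes through $u_0$. Consequently, for $v\in U_u$ the family of geodesic segments $[v,x]$, restricted near $v$, is exactly the family of segments $[v,u_0]$ near $v$, which is in turn exactly the family $[v,y]$ near $v$. Since membership in $\cut(\cdot)$ depends only on this local picture, $v\in\cut(x)\iff v\in\cut(y)$ throughout $U_u$. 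This holds for every typical $u$ and for \emph{arbitrary} $x,y\in\m$ simultaneously (because $u_0$ captures all far-away targets at once), so $\cut(x)$ and $\cut(y)$ agree on the open, dense, full-measure set $\bigcup_{u}U_u$, and the upgrade to all pairs is automatic rather than the crux of the proof.
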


\begin{thm}\label{T_cut-cts}
  Almost surely, the function $x\mapsto\cut(x)$ is continuous 
  almost everywhere in the following sense.
  
  For $\lam $-almost every $x\in\m$, for any neighbourhood 
  $N$ of $x$, there is a sub-neighbourhood $N'\subset N$ 
  so that 
  $\cut(x')-N$ is the same for all $x'\in N'$.
\end{thm}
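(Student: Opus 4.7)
The plan is to mirror the proof of Theorem~\ref{T_net-cts}, adding local control on pairs of geodesics so as to track the disjointness condition defining the strong cut locus. The governing idea is that perturbing $x$ to a nearby $x'$ modifies the structure of geodesics to $x$ only near $x$, leaving intact the property that two such geodesics are disjoint in a neighbourhood of a distant point $z$. Fix $x\in\m$ in the full-measure set where the conclusion of Theorem~\ref{T_net-cts} holds, Proposition~\ref{P_S=C} applies (so $\mul(x)=\cut(x)$), and the strengthened confluence of geodesics of Section~\ref{S_at} is valid at $x$. Given a neighbourhood $N$ of $x$, choose $\delta>0$ with $B(x,2\delta)\subset N$. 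Confluence provides a finite set $\Gamma\subset\partial B(x,\delta)$ of ``gateway'' points through which every geodesic from $x$ to a point outside $B(x,\delta)$ must pass. I would then pick $N'\subset B(x,\delta')$ with $\delta'\ll\delta$, small enough that the following uniform version of confluence holds: for every $x'\in N'$ and every $z\notin B(x,\delta)$, any geodesic from $x'$ to $z$ traverses a point close to some $p\in\Gamma$ and coincides, outside $B(x,\delta)$, with a geodesic from $x$ to $z$ emanating from $p$.

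Granted this, the inclusion $\cut(x)\setminus N\subset\cut(x')$ for $x'\in N'$ goes as follows. If $z\notin N$ admits two geodesics $\g_1,\g_2$ from $x$ that are disjoint in a neighbourhood of $z$, let $p_i\in\Gamma$ be the gateway traversed by $\g_i$ and let $\g_i'$ be the sub-segment of $\g_i$ from $p_i$ to $z$. Concatenating any geodesic from $x'$ to $p_i$ with $\g_i'$ yields a path from $x'$ to $z$ of length $d(x',p_i)+d(p_i,z)$; the uniform confluence statement supplies the identity $d(x',z)=d(x',p_i)+d(p_i,z)$, so this concatenation is itself a geodesic. The two geodesics from $x'$ to $z$ obtained this way coincide with $\g_1,\g_2$ outside $B(x,\delta)\subset N$, so they remain disjoint in a neighbourhood of $z$, showing $z\in\cut(x')$. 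The reverse inclusion follows by a symmetric argument, transporting any pair of geodesics from $x'$ to $z$ that are disjoint near $z$ back to the corresponding pair from $x$.

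The main obstacle is the uniform confluence statement on which the whole argument hinges: for all sufficiently small $N'$, every geodesic from any $x'\in N'$ to any $z\in\m\setminus B(x,\delta)$ should, outside $B(x,\delta)$, coincide with a geodesic between $x$ and $z$. A one-sided version is the content of the strengthened confluence phenomenon advertised in the abstract, and the uniform-in-$x'$ version should follow from it by a compactness argument: were the conclusion to fail, sequences $x_n\to x$ with associated geodesics violating it would produce, upon passing to a subsequential limit, a configuration contradicting the confluence structure available at $x$. With this uniform confluence in place, the two symmetric inclusions of the previous paragraph give $\cut(x)\setminus N=\cut(x')\setminus N$ for every $x'\in N'$, which is the desired conclusion.
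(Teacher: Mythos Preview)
Your approach is correct and, at its core, the same as the paper's---but you are re-deriving machinery that the paper has already packaged. The ``uniform confluence'' statement you identify as the main obstacle is exactly Proposition~\ref{P_conf-point}: for $\lambda$-almost every $x$, given a neighbourhood $N$ of $x$, there is a sub-neighbourhood $N'\subset N$ and a \emph{single} point $x_0\in N-N'$ such that every geodesic from any $x'\in N'$ to any $y\in N^c$ passes through $x_0$. With this in hand, your two inclusions collapse to a one-line observation: for $y\in N^c$ and $x'\in N'$, every geodesic $[x',y]$ decomposes as $[x',x_0]\cup[x_0,y]$, so $y\in\cut(x')$ if and only if there are two geodesics from $x_0$ to $y$ disjoint near $y$, a condition independent of $x'$. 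This is precisely the paper's proof.

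Two minor points. First, your ``finite set $\Gamma$'' of gateways should be a single point: Lemma~\ref{L_LG} says all geodesics from $x$ to points outside $B(x,\epsilon)$ coincide inside $B(x,\eta)$ for some $\eta<\epsilon$, so on $\partial B(x,r)$ for $r<\eta$ they meet in one point. With $|\Gamma|>1$ your push-forward argument would have a gap, since uniform confluence only says each geodesic from $x'$ uses \emph{some} gateway, not that every gateway is used by some geodesic from $x'$. Second, your compactness sketch for uniform confluence is exactly how the paper proves Proposition~\ref{P_conf-point} from Lemma~\ref{L_near}, so you are on the right track but should simply cite the proposition.
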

 
Theorem~\ref{T_cut-cts} brings to mind the results of 
Buchner~\cite{B77} and Wall~\cite{W77}, which show 
that the cut locus of a fixed point in a compact manifold 
is continuously stable under perturbations of the metric 
on an open, dense subset of its Riemannian 
metrics (endowed with the Whitney topology).

As for the geodesic nets in Theorem~\ref{T_frame}, 
we show that the union of all strong cut loci is a 
small subset of the map.
 
\begin{thm}\label{T_locus}
  Almost surely, $\bigcup_{x\in\m}\cut(x)$ 
  is of first Baire category. 
\end{thm}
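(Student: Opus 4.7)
The plan is to show $\bigcup_{x\in\m}\cut(x) \subseteq \cut(\rho)$ almost surely, and that $\cut(\rho)$ is itself of first Baire category; the theorem then follows since being of first category is preserved under passing to subsets.

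First I would establish that $\cut(\rho)$ is of first Baire category. By the identification discussed in Section~\ref{S_cut}, $\cut(\rho)$ almost surely coincides with the image under the quotient $\pp : \Te \to \m$ of the skeleton of $\Te$ (the set of non-leaves). Enumerate the branch points of $\Te$ as $(b_n)_{n\geq 1}$; every non-leaf of $\Te$ lies on the compact geodesic arc $[[\rho_\ee, b_n]]$ for some $n$ (take $b_n$ to be any branch point in the descendant sub-tree), so the skeleton decomposes as $\bigcup_n [[\rho_\ee, b_n]]$. Hence $\cut(\rho) = \bigcup_n \pp([[\rho_\ee, b_n]])$, a countable union of compact subsets of $\m$. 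Each $\pp([[\rho_\ee, b_n]])$ is a subset of $\cut(\rho)$, which has Hausdorff dimension $2$ by~\cite{LG10}, so has dimension at most $2$ itself. Since $\m$ has Hausdorff dimension $4$, no compact subset of dimension strictly less than $4$ can contain an open ball; each $\pp([[\rho_\ee, b_n]])$ is therefore nowhere dense, and $\cut(\rho)$ is a countable union of nowhere dense sets.

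Next I would establish the inclusion $\bigcup_x\cut(x) \subseteq \cut(\rho)$. Suppose $y \in \cut(x)$ for some $x$: then there are two geodesics from $y$ to $x$ that are disjoint in a neighborhood of $y$, so $y$ admits at least two distinct local geodesic directions in $\m$. Conversely, a point $y \notin \cut(\rho)$ corresponds to a leaf of $\Te$, and any geodesic in $\m$ starting at such $y$ must begin by following the unique direction up the tree; any two geodesics from $y$ therefore share an initial segment, preventing $y \in \cut(x)$ for any $x$.

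Combining these two steps yields $\bigcup_x\cut(x) \subseteq \cut(\rho)$, which is of first Baire category. The main obstacle is the second step: justifying rigorously that leaves of $\Te$ admit a unique local geodesic direction in $\m$, so that any two geodesics emanating from such a point must share an initial segment. This should follow from the structure of simple geodesics and the local confluence phenomenon developed in Sections~\ref{S_simple} and~\ref{S_at}, which control how geodesics in $\m$ must begin by tracking an arc of $\Te$ up from the leaf.
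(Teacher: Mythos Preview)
Your Step~1 is fine: $\cut(\rho)=\mul(\rho)$ is the image of the skeleton of $\Te$, which is a countable union of compact arcs, each projecting to a compact set of Hausdorff dimension at most $2$; since every nonempty open set in $\m$ has dimension $4$ (Proposition~\ref{P_Dim-U}), these images are nowhere dense.

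The genuine gap is Step~2. The inclusion $\bigcup_{x}\cut(x)\subseteq\cut(\rho)$ is \emph{not} a consequence of anything established in the paper, and your proposed justification does not go through. The confluence phenomena in Sections~\ref{S_simple} and~\ref{S_at} (Lemma~\ref{L_LG}, Lemma~\ref{L_at}, and even the stronger Proposition~\ref{P_conf-point}) apply only to $\lam$-almost every point, not to every leaf of $\Te$; the leaves form a set of full measure, so the almost-sure exceptional set could contain uncountably many of them. The assertion that a leaf of $\Te$ admits a \emph{unique} local geodesic direction in $\m$ --- equivalently, that every geodesic emanating from such a point initially coincides with the unique simple geodesic toward $\rho$ --- is precisely the kind of statement the paper cannot prove: it is bound up with the open question of whether ghost geodesics exist (see the discussion after Proposition~\ref{P_key}). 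Indeed, your inclusion would immediately give $\dim\bigcup_{x}\cut(x)\le 2$, which the paper explicitly flags as open in the remark following Proposition~\ref{P_cutLam}. So Step~2 is not a matter of filling in routine details; it would resolve a problem the authors leave unsettled.

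The paper's own route avoids this entirely. It observes that any $y\in\cut(x)$ is the centre of a geodesic $\eps$-star with two rays for some $\eps>0$, so $\bigcup_{x}\cut(x)\subseteq\bigcup_{n\ge 1}\star(n^{-1})$. Proposition~\ref{P_stars}, proved via the confluence-point Proposition~\ref{P_conf-point}, shows each $\star(\eps)$ is nowhere dense: near any typical point there is a sub-neighbourhood all of whose long geodesics funnel through a single confluence point, forbidding two disjoint rays. This argument never needs to control geodesic behaviour at \emph{every} leaf of $\Te$, only at a dense full-measure set.
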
  

We remark that Gruber~\cite{G82} 
(see also Zamfirescu~\cite{Z91}) shows that for most 
(in the sense of Baire category) convex 
surfaces $X$, for any point $x\in X$, the set of points with 
multiple geodesics to $x$ is of first Baire category. 
Since for typical points $x\in\m$, $\cut(x)$ is exactly 
the set of points with multiple geodesics to $x$ (that is, $\cut(x)=S(x)$, 
see Proposition~\ref{P_S=C}),  
Theorem~\ref{T_locus} shows that this 
property holds almost surely for
almost every point of the Brownian map.
That being said, there is a dense set of atypical points 
$D$ such that every $x\in D$ is connected to \emph{all} 
points outside 
a small neighbourhood of $x$ by multiple geodesics 
(see Proposition~\ref{P_mul}).

\subsection{Geodesic networks}\label{S_networks}

Next, we investigate the structure of geodesic segments 
between pairs of 
points in the Brownian map.

\begin{defn}
  For $x,y\in\m$, the \emph{geodesic network} between $x$ 
  and $y$, denoted $\geo(x,y)$, is the set of points in 
  some 
  geodesic segment between $x$ and $y$.
\end{defn}

Geodesic networks with one endpoint being the root of 
the map (or a typical point by invariance under re-rooting) are 
well understood. As discussed in Section~\ref{S_cut}, for any 
$y\in\m$, the number of connected components in 
$\mul(\rho)-\{y\}$ gives the number of geodesics from $y$ to 
$\rho$. Hence, by properties of the 
CRT, almost surely there is a dense 
set with Hausdorff dimension 2
of points with exactly two geodesics to the root; 
a dense, countable set of points with 
exactly three geodesics to the root; 
and no points connected to the root 
by more than three geodesics. 
By invariance under re-rooting, it follows 
that the set of pairs that are joined by multiple geodesics is 
a zero-volume subset of $(\m^2,\lam\otimes\lam)$ 
(see also Miermont~\cite{M09}). Hence the vast majority 
of networks in the Brownian map consist of a single 
geodesic segment. Furthermore, by Le Gall's description 
of geodesics to the root and invariance under re-rooting, 
geodesic segments 
from a typical point of the Brownian map have 
a specific topological structure.

For $x\in\m$, let $B(x,\eps)$ denote the open ball of radius $\eps$
centred at $x$.

\begin{defn}
  We say that the ordered pair of distinct points $(x,y)$
  is {\em regular} if any two distinct geodesic segments 
  between $x$ and $y$ are disjoint inside, and coincide outside, 
  a punctured ball centred at $y$ of radius less than
  $d(x,y)$. Formally, if $\g$ and $\g'$ are geodesic segments between $x$
  and $y$, then there exists $r\in (0,d(x,y))$ such that $\g\cap
  \g'\cap B(y,r)=\{y\}$ and $\g- B(y,r)=\g'-B(y,r)$.
\end{defn}

For typical points $x$, all pairs $(x,y)$ are regular
(see Section~\ref{S_simple}). 

We note that this notion is not symmetric, that is, $(x,y)$ being
regular does not imply that $(y,x)$ is regular.  In fact, observe that
$(x,y)$ and $(y,x)$ are regular if and only if there is a unique
geodesic from $x$ to $y$.

A key property is the following. 

  \begin{lem}
   \label{sec:geodesic-networks}
   If $(x,y)$ is regular and $\g$ is a geodesic segment between $x$
   and $y$, then for any point $z$ in the relative interior of $\g$,
   the segment $[x,z]\subset \g$ is the unique geodesic segment
   between $x$ and $z$. Hence, any points $z\neq z'$ in the 
   relative interior of $\g$ are joined by a unique geodesic. 

   Consequently, any geodesic segment $\g'$ to $x$ that intersects
   the relative interior of $\g$ at some point $z$ coalesces with $\g$
   from that point on, that is, $\g\cap B(x,d(x,z))=\g'\cap
   B(x,d(x,z))$.  
  \end{lem}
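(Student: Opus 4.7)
The plan is to reduce all three assertions to the first: that $[x,z]\subset\g$ is the unique geodesic from $x$ to $z$ whenever $z$ lies in the relative interior of $\g$. I would argue this by contradiction. Suppose $\eta$ were a geodesic segment from $x$ to $z$ differing from the sub-segment of $\g$ between $x$ and $z$. Concatenating $\eta$ with the sub-segment $[z,y]\subset\g$ yields a path $\eta'$ from $x$ to $y$ of total length $\d(x,z)+\d(z,y)=\d(x,y)$. A short triangle-inequality check (for $u\in\eta$ and $v\in[z,y]$, combining $\d(x,v)+\d(v,y)\ge\d(x,y)$ with $\d(x,u)=\d(x,z)-\d(u,z)$ and $\d(v,y)=\d(z,y)-\d(z,v)$ forces $\d(u,v)=\d(u,z)+\d(z,v)$) verifies that $\eta'$ is actually isometric to an interval, hence a bona fide geodesic segment from $x$ to $y$; and it differs from $\g$ since $\eta$ does.

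Now I invoke the regularity of $(x,y)$: some $r\in(0,\d(x,y))$ satisfies $\g\cap\eta'\cap B(y,r)=\{y\}$. However $[z,y]$ lies in both $\g$ and $\eta'$, so the sub-arc $\{w\in[z,y]:\d(w,y)<r\}$ is contained in $\g\cap\eta'\cap B(y,r)$ and is strictly larger than $\{y\}$ as soon as $r>0$ and $z\ne y$. This contradiction proves the uniqueness.

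The remaining assertions follow cleanly. For two interior points $z\ne z'$ of $\g$ with, say, $z$ between $x$ and $z'$ along $\g$, any geodesic $\sigma$ from $z$ to $z'$ concatenates with $[x,z]\subset\g$ to yield (by the same triangle-inequality argument) a geodesic segment from $x$ to $z'$ of length $\d(x,z')$; uniqueness at $z'$ forces this concatenation to equal the sub-arc $[x,z']\subset\g$, so $\sigma$ is the sub-arc of $\g$ between $z$ and $z'$. For the coalescence statement, if $\g'$ is a geodesic segment to $x$ passing through an interior point $z\in\g$, then its sub-segment from $x$ to $z$ is a geodesic from $x$ to $z$ and so equals $[x,z]\subset\g$ by the first part; intersecting with $B(x,\d(x,z))$ gives the claimed identity, since both $\g\cap B(x,\d(x,z))$ and $\g'\cap B(x,\d(x,z))$ then equal $\{w\in[x,z]:\d(x,w)<\d(x,z)\}$.

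The main technicality is not the logical structure but the verification that the concatenation of two geodesics whose lengths sum to the straight-line distance actually yields a geodesic segment in the isometric-interval sense used in the paper — a brief exercise, but the one place where the definition of ``geodesic segment'' enters essentially. Everything else is a direct unwinding of the regularity hypothesis.
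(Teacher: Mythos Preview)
Your proof is correct and follows essentially the same route as the paper: concatenate an alternative geodesic from $x$ to $z$ with the sub-segment $[z,y]\subset\g$ to produce a second geodesic from $x$ to $y$ that agrees with $\g$ near $y$, contradicting regularity. You supply more detail than the paper does (in particular the verification that the concatenation is a geodesic segment and the explicit deductions of the second and third claims), but the underlying argument is the same.
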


  \begin{proof}
    Let $(x,y)$ be regular and let $\g$ be a geodesic segment between
    $x$ and $y$. Assume that there are two distinct geodesic segments
    $\g_1,\g_2$ between $z$ and $x$, where $z$ is some point in the
    relative interior of $\g$. By adding the sub-segment $[y,z]\subset
    \g$ to $\g_1$ and $\g_2$, we obtain two distinct geodesic segments
    between $y$ and $x$ that coincide in the non-empty neighbourhood
    $B(y,d(y,z))$ of $y$, contradicting the definition of regularity
    for $(x,y)$. This gives the first part of the statement, and the
    second part is a straightforward consequence.
  \end{proof}

We find that all except very few
geodesic networks in the Brownian 
map are, in the following sense, a concatenation of two 
regular networks.

\begin{defn}
  For $(x,y)\in\m^2$ and $j,k\in\NN$, we say that 
  $(x,y)$ induces a \emph{normal $(j,k)$-network}, and 
  write $(x,y)\in\net(j,k)$, if for some $z$ in 
  the relative interior of 
  all geodesic 
  segments between $x$ and $y$,  
  $(z,x)$ and $(z,y)$ are regular and $z$ is connected
  to $x$ and $y$ by exactly $j$ and $k$ geodesic 
  segments, respectively.
\end{defn}

\begin{figure}[h]
\centering
\includegraphics[scale=0.8]{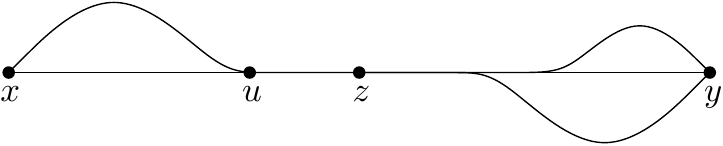}
\caption{
  As depicted, $(x,y)\in\net(2,3)$. 
  Note that $(u,x)$ does not induce 
  a normal $(j,k)$-network. 
}
\label{F_normal}
\end{figure}

In particular, note if $x,y$ are joined by exactly
$k$ geodesics and $(x,y)$ is regular, 
then $(x,y)\in\net(1,k)$. (Take $z$ to be a point
in the relative interior of the geodesic segment
contained in all $k$ segments from $x$ to $y$.)

Not all networks are normal $(j,k)$-networks. For 
instance, if $(x,y)\in\net(j,k)$ and $j>1$, then there is 
a point $u\in\geo(x,y)$ so that $u$ is joined to $x$ by 
two geodesics with disjoint relative interiors. See 
Figure~\ref{F_normal}. That being said, most pairs 
induce normal $(j,k)$-networks. Moreover, for each 
$j,k\in\{1,2,3\}$, there are many normal $(j,k)$-networks 
in the map. Hence, in particular, we establish the existence 
of atypical networks comprised of more than three 
geodesics (and up to nine).

\begin{thm}\label{T_normal}
  The following hold almost surely.
  \begin{enumerate}[nolistsep,label={(\roman*)}]
  \item For any $j,k\in\{1,2,3\}$, $\net(j,k)$ is dense in $\m^2$.
  \item $\m^2- \bigcup_{j,k\in\{1,2,3\}} N(j,k)$ 
  is nowhere dense in $\m^2$.
  \end{enumerate}
\end{thm}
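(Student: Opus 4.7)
The plan rests on two inputs: Le Gall's classification~\cite{LG10} of geodesics to the root, transported by re-rooting invariance to any typical point, and the stability results (Theorems~\ref{T_net-cts} and~\ref{T_cut-cts}) developed earlier in the paper.

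For part (i), the case $(j,k)=(1,1)$ is immediate: for any open $U,V\subset\m$, a typical pair $(x,y)\in U\times V$ has a unique geodesic, and any interior point $z$ of it witnesses $(x,y)\in\net(1,1)$, since regularity of $(z,x)$ and $(z,y)$ is vacuous. For general $(j,k)$, I would fix a typical anchor $z$ and construct $x\in U$, $y\in V$ such that $z$ lies in the interior of every geodesic between $x$ and $y$, with exactly $j$ geodesics $[z,x]$ and $k$ geodesics $[z,y]$. Concretely, take a typical pair $(x_0,y_0)\in U\times V$ with unique geodesic $\g_0$ and a typical interior point $z\in\g_0$, and re-root at $z$; by Le Gall's classification, all pairs $(z,\cdot)$ are regular, $\mul(z)=\cut(z)$, and for each $m\in\{1,2,3\}$ the set of points with exactly $m$ geodesics to $z$ is dense in $\m$. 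I would then perturb $x_0$ into $x\in U$ with $j$ geodesics to $z$, and $y_0$ into $y\in V$ with $k$ geodesics to $z$, choosing $x$ and $y$ in distinct subtrees of the simple-geodesic tree rooted at $z$. Each of the $jk$ concatenations of a geodesic $[x,z]$ with a geodesic $[z,y]$ has length $d(x,z)+d(z,y)=d(x,y)$ and hence is a geodesic; provided $z$ lies on every geodesic between $x$ and $y$, these exhaust the geodesic network and yield exactly the structure required by $\net(j,k)$, with regularity of $(z,x)$ and $(z,y)$ inherited from the typicality of $z$.

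For part (ii), I would show that every open $W\subset\m^2$ contains an open subset lying in $\bigcup_{(j,k)\in\{1,2,3\}^2}\net(j,k)$. Part (i) produces a pair $(x_0,y_0)\in W\cap\net(j_0,k_0)$ for some $(j_0,k_0)$. The stability of cut loci and geodesic nets at typical points (Theorems~\ref{T_net-cts} and~\ref{T_cut-cts}), applied both at the anchor $z$ and at $x_0,y_0$, implies that for $(x,y)$ in a sufficiently small neighbourhood of $(x_0,y_0)$ the normal-network structure around $z$ persists, even if the exact type $(j,k)$ fluctuates within $\{1,2,3\}^2$ (which it does, since $\cut(x)$ is dense and perturbing $y$ freely switches the number of geodesics from $x$ to $y$). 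Crucially, the multiplicities at $z$ are controlled by Le Gall's classification and remain in $\{1,2,3\}$, so the perturbed pair still lies in some $\net(j,k)$.

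The principal obstacle is part (i), specifically the requirement that the anchor $z$ lie on \emph{every} geodesic between the perturbed endpoints, not merely on one. To handle this I would exploit the sphere topology of $\m$ together with the tree structure of simple geodesics from $z$: although $\m-\{z\}$ is connected, for typical $z$ the point $z$ acts as a metric cut-point between points lying in distinct subtrees of the simple-geodesic tree rooted at $z$, since any path from $x$ to $y$ avoiding a small neighbourhood of $z$ must skirt $\cut(z)$ and incur a strict additional length, quantifiable from the local label process on the underlying CRT. Combined with an upper semi-continuity argument on the number of geodesics between nearby pairs, this promotes the existence of one geodesic through $z$ to the conclusion that every geodesic between $x$ and $y$ passes through $z$.
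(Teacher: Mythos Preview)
Your proposal correctly isolates the crux of the argument --- guaranteeing that the anchor $z$ lies on \emph{every} geodesic between the perturbed endpoints --- but the mechanism you offer for this does not work, and in trying to repair it you are re-proving the hardest technical result of the paper without realising it.

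First, a structural issue: you cannot simply ``take a typical interior point $z\in\g_0$''. Geodesic segments have $\lam$-measure zero (they have Hausdorff dimension $1$ in a space of dimension $4$), so there is no reason a full-measure set of typical points should meet $\g_0$ at all. Re-rooting at such a $z$ is therefore not justified. The paper sidesteps this entirely: it takes the anchor $z$ to be the \emph{midpoint} of the unique geodesic $[u,v]$ between two typical points $u,v$, with no typicality assumption on $z$ itself.

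Second, and more seriously, your ``metric cut-point'' heuristic --- that a path from $x$ to $y$ avoiding $z$ must skirt $\cut(z)$ and pick up strict extra length, quantifiable from the label process --- is not an argument. The sphere topology gives you nothing here ($\m-\{z\}$ is connected), and the tree structure of simple geodesics from $z$ only controls geodesics \emph{to} $z$, not geodesics between arbitrary nearby points. What you actually need is exactly Lemma~\ref{L_near}: for typical $u$, any geodesic $[u',v']$ with $u'$ near $u$ and $v'$ near $v$ coincides with some geodesic $[u,v]$ outside small balls around $u$ and $v$. Applied with $u,v\in T$ joined by a unique geodesic, this forces every such $[u',v']$ through the midpoint $z$. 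The regularity of $(z,u')$ and $(z,v')$ then follows from the regularity of $(u,u')$ and $(v,v')$ (property~(iii) of $T$), since geodesics from $z$ are restrictions of geodesics from $u$ or $v$. Lemma~\ref{L_near} is the content of Section~\ref{S_near} and rests on the delicate Lemma~\ref{L_parallel}; your proposal is essentially attempting to re-derive it by hand.

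Finally, for part~(ii) you are working too hard. Once Lemma~\ref{L_near} gives you open balls $U,V$ such that every geodesic $[u',v']$ with $(u',v')\in U\times V$ passes through $z$, the decomposition $U=\bigcup_j U_j$, $V=\bigcup_k V_k$ already yields $U\times V\subset\bigcup_{j,k}\net(j,k)$. No separate stability argument is needed.
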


By Theorem~\ref{T_normal}, there are essentially only six types 
of geodesic networks which are dense in the Brownian map. See 
Figure~\ref{F_networks}. 

\begin{figure}[h]
\centering
\includegraphics[scale=0.8]{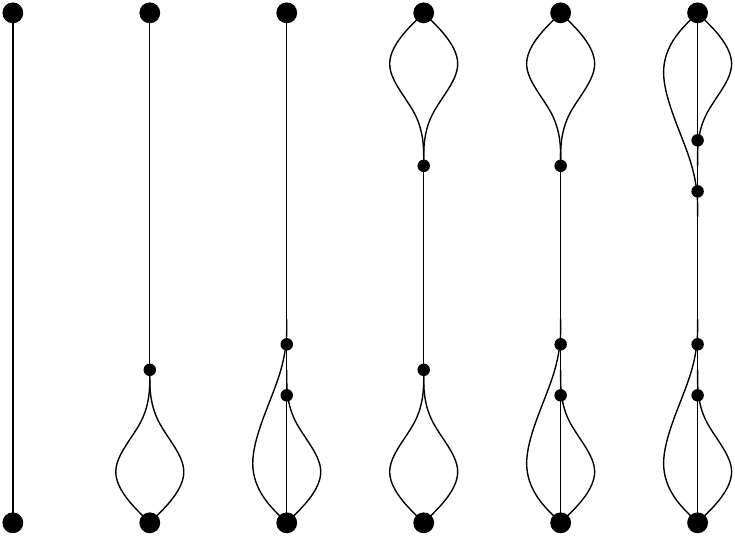}
\caption{Theorem~\ref{T_normal}: 
  Classification of networks which are dense in the Brownian map
  (up to symmetries and homeomorphisms of the sphere).
  } 
\label{F_networks}
\end{figure} 

Since the geodesic net of the root,
or a typical point by invariance under re-rooting, 
is a binary tree --- which follows by the uniqueness of 
local minima of the label process $Z$, see
\cite[Lemma~3.1]{LGP08}, and since  
$\geo(\rho)$ is the tree $[0,1]/\{d_Z=0\}$, 
see Section~\ref{S_simple} --- it can be shown 
using ideas in the proof of 
Theorem~\ref{T_normal-dim} below
that the pairs of small dots near the large dots in the 
3rd, 5th and 6th networks in Figure~\ref{F_networks}  
are indeed distinct points. (That is, Theorem~\ref{T_normal}
would still hold if we were to further require that normal networks
have this additional property.)
For instance, in Figure~\ref{F_normal-dim} below, 
note that 
all geodesic segments from $y$ to $y'$ are 
sub-segments of geodesics from $y$ to the typical
point $z_n$, and hence do not coalesce at the same point.
We omit further discussion on this 
small detail.

It remains an interesting open problem to 
fully classify the types of geodesic networks 
in the Brownian map.

Additionally, we obtain the dimension of the sets 
$N(j,k)$, $j,k\le3$.

For a set $A\subset\m$, let $\H A$ and $\pack A$
denote its Hausdorff and packing dimensions, 
respectively.

\begin{thm}\label{T_normal-dim}
  Almost surely, we have that 
  $\H\net(j,k)= \pack\net(j,k)=2(6-j-k)$, 
  for all $j,k\in\{1,2,3\}$. 
  Moreover, $N(3,3)$ is countable.
\end{thm}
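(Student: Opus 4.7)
The plan is to reduce the computation of $\H\net(j,k)$ and $\pack\net(j,k)$ to the dimensions of the fibers $T_j(z) := \{y\in\m\setminus\{z\}: y \text{ is joined to } z \text{ by exactly } j \text{ geodesics}\}$. By Le Gall's classification of geodesics to typical points (see Section~\ref{S_simple}) and the re-rooting invariance of $(\m,\d)$ under $\lam$, for $\lam$-a.e.\ $z\in\m$ we have $\H T_j(z) = \pack T_j(z) = 2(3-j)$ for $j\in\{1,2\}$, while $T_3(z)$ is countable. By definition of $\net(j,k)$, every pair $(x,y)\in\net(j,k)$ admits a confluence $z$ with $x\in T_j(z)$ and $y\in T_k(z)$, linking network dimensions to fiber dimensions.

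For the lower bound, I fix a typical $z$. Within $T_j(z)\times T_k(z)$ I identify the subset of pairs $(x,y)$ whose geodesics to $z$ approach along transverse directions in the underlying labelled CRT and which satisfy $\d(x,y)=\d(x,z)+\d(z,y)$, so that $z$ lies on every geodesic between $x$ and $y$ and is joined to each by regular collections of $j$ and $k$ geodesics; by Lemma~\ref{sec:geodesic-networks} these pairs lie in $\net(j,k)$. Provided this transversality subset retains full Hausdorff dimension in $T_j(z)\times T_k(z)$, and using that $\H T_j(z)=\pack T_j(z)$ for each $j\in\{1,2\}$, the product dimension formula yields $\H\net(j,k)\geq\pack\net(j,k)\geq 2(3-j)+2(3-k)=2(6-j-k)$.

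For the upper bound, I sample an i.i.d.\ dense sequence $(z_n)_{n\geq 1}$ from $\lam$, so that each $z_n$ is almost surely a typical point. The local $j$-star structure at $x$ in an $\net(j,k)$-pair is witnessed along the common initial geodesic segment emanating from $x$; using the continuity of geodesic nets (Theorem~\ref{T_net-cts}) and a careful choice of scale, this structure can be detected by a nearby typical $z_n$. Hence $\net(j,k)$ is essentially covered by $\bigcup_n T_j(z_n)\times T_k(z_n)$, and $\H\net(j,k)\leq 2(6-j-k)$ follows from the product inequality $\H(A\times B)\leq \H A+\pack B$. The countability of $\net(3,3)$ is then immediate, since each $T_3(z_n)$ is countable, so the covering is a countable union of countable sets.

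The main obstacle is the upper bound: the confluence point of a pair $(x,y)\in\net(j,k)$ depends intricately on $(x,y)$ and is not itself distributed according to $\lam$, so the reduction to typical confluence points must be carefully justified. Concretely, one needs that the $j$-star structure at $x$ is encoded in a local combinatorial quantity on the CRT near $x$, stable under small perturbations of the witnessing confluence point --- this follows by combining the stability theorems of Section~\ref{S_cut} with Le Gall's classification of geodesics to typical points.
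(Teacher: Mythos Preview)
Your overall plan --- relate $N(j,k)$ to products $S_j(z)\times S_k(z)$ for typical $z$ and apply Howroyd's product inequalities --- is the same as the paper's, but the lower bound as you describe it cannot work. You propose to fix a $\lambda$-typical $z$ and look for pairs $(x,y)\in T_j(z)\times T_k(z)$ with $d(x,y)=d(x,z)+d(z,y)$ and ``transverse'' approaches at $z$. But confluence at typical points (Lemma~\ref{L_LG}) forces \emph{all} geodesics from a typical $z$ to points outside $B(z,\epsilon)$ to coincide in some $B(z,\eta)$; applying this to the two halves $[z,x]$, $[z,y]$ of a putative geodesic through $z$ shows that a $\lambda$-typical $z$ lies in the interior of \emph{no} geodesic segment whatsoever. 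Your transversality set is therefore empty. The paper circumvents this by taking $z$ to be the midpoint of the unique geodesic $[u,v]$ between two typical points $u,v\in T$. This $z$ is decidedly \emph{not} typical, but Lemma~\ref{L_near} guarantees that every geodesic between small neighbourhoods $U\ni u$ and $V\ni v$ passes through $z$, and regularity of $(z,\cdot)$ is inherited from that of $(u,\cdot)$ and $(v,\cdot)$. One then gets open sets $U,V$ with $(U\cap S_j(u))\times(V\cap S_k(v))\subset N(j,k)$, and the lower bound follows from Proposition~\ref{P_ScapU} and Lemma~\ref{L_dimx}(i). (Incidentally, you have written $\H\geq\pack$; the inequality goes the other way.)

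For the upper bound, your target inclusion $N(j,k)\subset\bigcup_n S_j(z_n)\times S_k(z_n)$ over a countable dense set of typical $z_n$ is exactly what the paper establishes, but your appeal to Theorem~\ref{T_net-cts} does not do the job: that result controls $G(x')$ for $x'$ near a \emph{typical} base point, whereas the endpoints $x,y$ of a pair in $N(j,k)$ with $j\geq 2$ are never typical, and there is no ``common initial segment emanating from $x$'' when $j\geq 2$. The paper's mechanism is different: given $(x,y)\in N(j,k)$ with common middle segment $[x',y']$, it places an auxiliary typical point $x_0$ in a bounded complementary component of $G(x,y)$; by the Jordan Curve Theorem any geodesic $[x_0,y]$ contains $[x',y']$, and since $x_0\in T$, Proposition~\ref{P_key} makes every sub-segment of $[x_0,y]$ stable. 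Taking typical $z_n\to z$ (the midpoint of $[x',y']$), stability of $[x',z]$ and $[z,y']$ forces all geodesics $[x,z_n]$ and $[z_n,y]$ to pass through $x'$ and $y'$ respectively for large $n$, yielding $x\in S_j(z_n)$, $y\in S_k(z_n)$. Finally, to get the packing (not just Hausdorff) upper bound you need Lemma~\ref{L_dimx}(ii), $\pack(A\times B)\leq\pack A+\pack B$, rather than the mixed inequality you cite.
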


We remark that since $\net(j,k)$, for any 
$j,k\in\{1,2,3\}$, is dense in $\m^2$ 
(by Theorem~\ref{T_normal})
its Minkowski 
dimension is that of $\m^2$, which by 
Proposition~\ref{P_Dim-U} below is 
almost surely equal to 8.

\begin{defn}
  For each $k\in \NN$, let $\pair(k)\subset\m^2$ denote the
  set of pairs of points that are connected by exactly $k$ geodesics.
\end{defn}

Theorems~\ref{T_normal},\ref{T_normal-dim} imply the following results.

\begin{cor}\label{T_pairs}
  Put $K=\{1,2,3,4,6,9\}$. 
  The following hold almost surely.
  \begin{enumerate}[nolistsep,label={(\roman*)}]
  \item For each $k\in K$, $\pair(k)$ is dense in $\m^2$.
  \item $\m^2-\bigcup_{k\in K}\pair(k)$ is nowhere 
  	dense in $\m^2$.
  \end{enumerate}
\end{cor}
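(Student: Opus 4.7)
The plan is to reduce Corollary \ref{T_pairs} to Theorem \ref{T_normal} via the key inclusion
\[
\net(j,k) \subseteq \pair(jk) \qquad \text{for all } j,k\in\{1,2,3\}.
\]
Since $\{jk : j,k\in\{1,2,3\}\} = \{1,2,3,4,6,9\} = K$, both parts of the corollary follow immediately by monotonicity.

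To prove the inclusion, fix $(x,y)\in\net(j,k)$ and let $z$ witness membership: $(z,x)$ and $(z,y)$ are regular, $z$ lies in the relative interior of every geodesic segment from $x$ to $y$, and there are exactly $j$ geodesic segments from $z$ to $x$ and $k$ from $z$ to $y$. I would set up a bijection between geodesic segments from $x$ to $y$ and pairs $(\g_1,\g_2)$ of geodesic segments, where $\g_1$ runs from $z$ to $x$ and $\g_2$ from $z$ to $y$. In one direction, any geodesic segment $\g$ from $x$ to $y$ contains $z$ in its relative interior, and restriction to the two halves produces such a pair. In the other direction, I would show that any pair $(\g_1,\g_2)$ concatenates into a geodesic segment from $x$ to $y$: first, $\g_1\cap\g_2=\{z\}$, since any other common point $w$ would sit at the same arc-length distance from $z$ along either segment, yielding a path from $x$ to $y$ of length strictly less than $d(x,z)+d(z,y)=d(x,y)$; second, the natural length-preserving parameterization of $\g_1\cup\g_2$ by $[-d(x,z),d(z,y)]$ is an isometry, where the only non-trivial case is $s<0<t$, which follows from applying the triangle inequality at $x$ and $y$. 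Distinct pairs produce distinct concatenations (as subsets of $\m$), so the bijection shows that $x$ and $y$ are joined by exactly $jk$ geodesic segments, giving $(x,y)\in\pair(jk)$.

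With the inclusion in hand, for (i) I would observe that for any $k\in K$, choosing $j,k'\in\{1,2,3\}$ with $jk'=k$ yields $\pair(k)\supseteq\net(j,k')$, which is dense in $\m^2$ by Theorem \ref{T_normal}(i). For (ii), the inclusion gives
\[
\m^2 - \bigcup_{k\in K}\pair(k) \;\subseteq\; \m^2 - \bigcup_{j,k\in\{1,2,3\}}\net(j,k),
\]
and the right-hand side is nowhere dense by Theorem \ref{T_normal}(ii), hence so is the left.

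The only real content is the bijection argument, and within it the main subtlety is verifying that the concatenated subset $\g_1\cup\g_2$ is a genuine geodesic segment (an isometric image of an interval), not merely a length-minimizing path. This is where the explicit triangle-inequality estimate across the junction at $z$ is required. Once this geometric step is in place, the reduction to Theorem \ref{T_normal} is a matter of bookkeeping.
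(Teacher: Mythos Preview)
Your proposal is correct and follows the same approach as the paper: the paper's proof simply asserts the inclusion $N(j,k)\subset P(jk)$ for all $j,k\in\NN$ and observes that Corollary~\ref{T_pairs} then follows from Theorem~\ref{T_normal}. You supply the routine verification of this inclusion (the bijection between geodesics $[x,y]$ and pairs of geodesics $[x,z],[z,y]$) that the paper leaves implicit, but the reduction is identical.
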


\begin{cor}\label{T_pairs-dim}
  Almost surely, we have that $\H\pair(2)\ge6$, 
  $\H\pair(3)\ge4$, $\H\pair(4)\ge4$ 
  and $\H\pair(6)\ge2$.
\end{cor}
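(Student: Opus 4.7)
The plan is to establish the inclusion $\net(j,k)\subset\pair(jk)$ for each $j,k\in\{1,2,3\}$, and then invoke Theorem~\ref{T_normal-dim} together with the monotonicity of Hausdorff dimension under inclusion. The four bounds in the corollary will follow by specializing to $(j,k)=(1,2),(1,3),(2,2),(2,3)$, which give $2(6-j-k)=6,4,4,2$, respectively.

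To verify the inclusion, fix $(x,y)\in\net(j,k)$ and let $z$ be the point provided by the definition: $z$ lies in the relative interior of every geodesic segment between $x$ and $y$, the pairs $(z,x)$ and $(z,y)$ are regular, and there are exactly $j$ geodesic segments from $z$ to $x$ and exactly $k$ from $z$ to $y$. Since $z$ lies on at least one geodesic from $x$ to $y$, we have the distance additivity $d(x,z)+d(z,y)=d(x,y)$. I claim that the geodesic segments from $x$ to $y$ are in bijection with pairs $(\g_1,\g_2)$, where $\g_1$ is a geodesic segment from $x$ to $z$ and $\g_2$ is one from $z$ to $y$. In one direction, any geodesic $\g$ from $x$ to $y$ passes through $z$ by hypothesis, and its two sub-segments meeting at $z$ form such a pair. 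Conversely, given $(\g_1,\g_2)$, the concatenation $\g_1\cup\g_2$ has total length $d(x,y)$; for any $a\in\g_1$ and $b\in\g_2$, combining the triangle inequality $d(x,y)\le d(x,a)+d(a,b)+d(b,y)$ with $d(a,b)\le d(a,z)+d(z,b)$ and the fact that $\g_1,\g_2$ are geodesics forces $d(a,b)=d(a,z)+d(z,b)$, so $\g_1\cup\g_2$ is isometric to an interval. The splitting at $z$ is manifestly injective, so $x$ and $y$ are joined by exactly $jk$ geodesic segments, giving $(x,y)\in\pair(jk)$.

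With the inclusion in hand, Theorem~\ref{T_normal-dim} and monotonicity of Hausdorff dimension yield
$$\H\pair(jk)\ge\H\net(j,k)=2(6-j-k)$$
for each $j,k\in\{1,2,3\}$; the four specializations above then give the corollary. I do not anticipate any serious obstacle here: the only mildly delicate step is verifying that the concatenation of two geodesic segments meeting at $z$ is genuinely a geodesic segment rather than merely a length-minimizing path, but this is immediate from the additivity $d(x,z)+d(z,y)=d(x,y)$ via the triangle inequality. Note also that the regularity assumptions in the definition of $\net(j,k)$ are not needed for this argument; the essential content used is that every geodesic from $x$ to $y$ passes through a common interior point $z$ with the prescribed counts $j$ and $k$ of geodesics to $x$ and $y$.
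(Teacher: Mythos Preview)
Your proposal is correct and follows exactly the paper's approach: the paper simply notes that $\net(j,k)\subset\pair(jk)$ and then invokes Theorem~\ref{T_normal-dim}. You have supplied the details of this inclusion (the bijection between geodesics $[x,y]$ and pairs of geodesics through the common interior point $z$), which the paper leaves implicit; your verification is sound, including the check that concatenations are genuine geodesic segments.
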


We expect the lower bounds in Corollary~\ref{T_pairs-dim} 
to give the correct Hausdorff dimensions of the sets 
$P(k)$, $k\in K-\{1,9\}$. As discussed in Section~\ref{S_cut}, 
$\pair(1)$ is of full volume, and hence $\H\pair(1)=8$. 
We suspect that $\pair(9)$ is countable. It would be of 
interest to determine if the set $\pair(k)$ is non-empty 
for some $k\notin K$, and whether there is any $k\not\in K$ 
for which it has positive dimension. 
We hope to address these issues in future work.

\subsection{Confluence points}\label{S_conf-point}

Our key tool is a strengthening of the confluence of 
geodesics phenomenon of Le Gall~\cite{LG10} 
(see Section~\ref{S_at}). We find that for any 
neighbourhood $N$ of a typical point in the Brownian 
map, there is a \emph{confluence point} $x_0$ between 
a sub-neighbourhood $N'\subset N$ and the complement 
of $N$. See Figure~\ref{F_conf-point}. 

\begin{pro}\label{P_conf-point}
  Almost surely, for $\lam$-almost every $x\in\m$, 
  the following holds. For any neighbourhood $N$ of $x$, 
  there is a sub-neighbourhood $N'\subset N$ and 
  some $x_0\in N-N'$ so that all geodesics between 
  any points $x'\in N'$ and $y\in N^c$ pass through $x_0$.
\end{pro}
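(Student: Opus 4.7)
The plan is to bootstrap Le Gall's confluence of geodesics at a typical point to a statement about geodesics emanating from a whole small neighbourhood, using a compactness argument together with a quantitative length-gap estimate. By invariance of the Brownian map under uniform re-rooting, it suffices to verify the statement at a fixed typical point $x$, say the root. Fix a neighbourhood $N$ of $x$, choose $r > 0$ with $\overline{B(x,r)} \subset N$, and apply the classical confluence to obtain a point $x_0$ with $0 < \d(x,x_0) < r$ such that every geodesic from $x$ to any $y \in \m - B(x,r)$ passes through $x_0$. In particular $x_0 \in N - \{x\}$.

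Next, take $N' = B(x,r')$ for some $r' > 0$ much smaller than $\d(x,x_0)$, so that $x_0 \in N - N'$. The claim to be proved is that every geodesic from any $x' \in N'$ to any $y \in N^c$ passes through $x_0$. Suppose this fails. Then one finds sequences $x_n' \to x$, $y_n \in N^c$, and geodesics $\g_n$ from $x_n'$ to $y_n$ that avoid $x_0$. Using compactness of $\m$ and Arzel\`a--Ascoli (applied to the $1$-Lipschitz parameterizations of the $\g_n$ on intervals of nearly constant length $\d(x_n',y_n)$), extract subsequences with $y_n \to y^*$ and $\g_n$ converging uniformly to a geodesic $\g^*$ from $x$ to $y^*$, where necessarily $\d(x,y^*) \geq r$. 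The classical confluence yields $x_0 \in \g^*$.

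The crux is to upgrade $x_0 \in \g^*$ to $x_0 \in \g_n$ for all large $n$. For this we need a quantitative form of confluence at $x$: for every sufficiently small $\eps > 0$, there exists $\mu > 0$ such that any path from $x$ to any $y \in \m - B(x,r)$ avoiding $B(x_0,\eps)$ has length at least $\d(x,y) + \mu$. Concatenating a geodesic from $x$ to $x_n'$ (of length $< r' \to 0$) with $\g_n$ gives a path from $x$ to $y_n$ of total length $\d(x,x_n') + \d(x_n',y_n) \to \d(x,y^*)$; by the gap estimate this path must enter $B(x_0,\eps)$ for all large $n$, whence so does $\g_n$. Letting $\eps \to 0$ along a diagonal extraction and invoking the uniqueness of the common initial segment $[x,x_0]$ for geodesics from $x$ to far points (so that a subsequential limit of points $z_n \in \g_n \cap B(x_0,\eps)$ must equal $x_0$, together with the fact that $\g_n$ is geodesic and so cannot ``nearly pass through'' $x_0$ without actually hitting it), one concludes $x_0 \in \g_n$ for large $n$, yielding the desired contradiction.

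The main obstacle is producing the quantitative length gap $\mu(\eps) > 0$: compactness alone yields only qualitative convergence, so a generic argument can force $\g_n$ arbitrarily close to $x_0$ but not onto it. To establish the gap we anticipate exploiting Le Gall's description of geodesics via the label process on the CRT underlying the Brownian map: at a typical $x$, the confluence point $x_0$ corresponds to a strict, isolated local minimum along a single branch of the encoding tree, so any detour around a small neighbourhood of $x_0$ forces a macroscopic excess in the relevant label values, which translates, via the Cori--Vauquelin--Schaeffer encoding of distances, into a macroscopic excess in path length. Combined with invariance under re-rooting, this label-gap estimate makes the bootstrap work $\lam$-almost surely.
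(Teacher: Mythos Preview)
Your argument has a genuine gap at the final step, and it is precisely the step that carries all the difficulty. Granting your quantitative length-gap claim, your concatenation argument does show that the geodesics $\g_n$ must enter every ball $B(x_0,\eps)$ for $n$ large. But this only says that $x_0$ lies in the Hausdorff limit of the $\g_n$, which you already knew from $\g_n\to\g^*\ni x_0$. The assertion that ``$\g_n$ is geodesic and so cannot `nearly pass through' $x_0$ without actually hitting it'' is not a general fact about geodesic metric spaces and is not justified by anything you have written; in fact, it is exactly the strong convergence phenomenon (Proposition~\ref{P_key}) that the paper isolates and proves separately. Without it, there is no reason the $\g_n$ cannot slide past $x_0$ at distance $\eps_n\to 0$ forever, and your diagonal extraction yields only $z_n\to x_0$ with $z_n\in\g_n$, not $x_0\in\g_n$.

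The paper's route to this missing step is quite different from your length-gap idea and is essentially topological rather than metric. Fixing a geodesic $\g$ from the root and using that $\m$ is a topological sphere, the paper constructs (Lemma~\ref{L_u}) auxiliary geodesics $[u_\ell,v_\ell]$ and $[u_r,v_r]$ between \emph{typical} points lying on the left and right sides of $\g$, each of which coincides with $\g$ along a long sub-segment. Because their endpoints are typical, these auxiliary geodesics are unique, so $\g_n$ cannot cross them; a Jordan Curve argument then traps $\g_n$ onto the common sub-segment of $\g$ for large $n$ (Lemma~\ref{L_parallel}). This is what forces $x_0\in\g_n$ rather than merely $d(x_0,\g_n)\to 0$. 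Your label-process sketch for the length gap does not supply a substitute for this trapping: the label description controls geodesics to the root, not arbitrary continuous paths in $\m- B(x_0,\eps)$, and in any case a uniform excess $\mu$ over all $y\in N^c$ is asserted but not derived. If you want to salvage the approach, you would need an argument that a geodesic passing within $\eps$ of $x_0$ must pass through $x_0$ exactly; that is the content of strong convergence, and it appears to require the planarity argument.
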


\begin{figure}[h]
\centering
\includegraphics[scale=0.8]{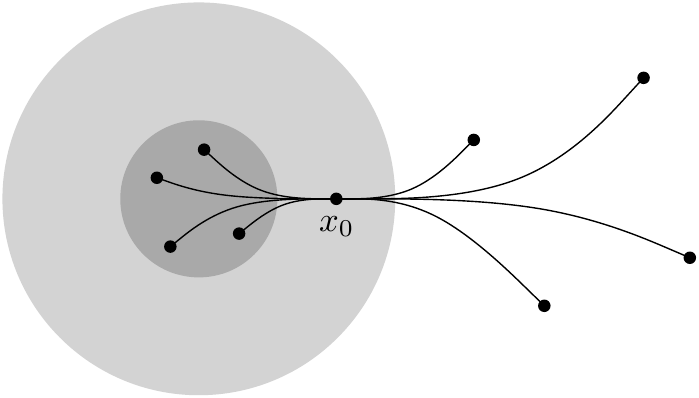}
\caption{Proposition~\ref{P_conf-point}:
  All geodesics from points in 
  $N'$ to points in the complement of $N\supset N'$
  pass through a confluence point $x_0$.
}
\label{F_conf-point}
\end{figure}

\begin{defn}
  We say that a sequence of geodesic segments 
  $\g_n$ converges to a geodesic segment $\g$, 
  and write $\g_n\to \g$, if $\g_n$ converges to $\g$ 
  with respect to the Hausdorff topology.
\end{defn}

Since $(\m,\d)$ is almost surely homeomorphic 
to $\SS^2$, and hence almost surely compact, 
the following lemma is a straightforward consequence 
of the Arzel\`{a}-Ascoli Theorem
(see, for example, 
Bridson and Haefliger {\cite[Corollary 3.11]{BH99}}). 

\begin{lem}\label{L_AA}
  Almost surely, the set of geodesic segments in 
  $(\m,\d)$ is compact (with respect to the 
  Hausdorff topology).
\end{lem}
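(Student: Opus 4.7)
The plan is to reduce the statement to a closedness assertion inside the hyperspace of closed subsets of $\m$. Since $(\m,\d)$ is almost surely homeomorphic to $\SS^2$, and in particular compact, the set $\mathcal{K}(\m)$ of non-empty compact subsets of $\m$ endowed with the Hausdorff distance is itself a compact metric space (Blaschke selection). It therefore suffices to prove that the subset of $\mathcal{K}(\m)$ consisting of geodesic segments is closed in the Hausdorff topology; the lemma then follows because a closed subset of a compact space is compact.

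So suppose $\g_n\to A$ in the Hausdorff topology, with each $\g_n$ a geodesic segment. Parametrize $\g_n$ by arc length as an isometry $f_n\colon [0,L_n]\to\m$, where $L_n$ is the $\d$-length of $\g_n$. Since $L_n\le\mathrm{diam}(\m,\d)<\infty$, we may pass to a subsequence along which $L_n\to L\in[0,\mathrm{diam}(\m,\d)]$ and the extremities $f_n(0),f_n(L_n)$ converge to points $x,y\in\m$. Extend each $f_n$ to $[0,\mathrm{diam}(\m,\d)]$ by setting it constant equal to $f_n(L_n)$ beyond $L_n$; the extended maps are $1$-Lipschitz, and their images lie in the compact set $\m$. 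This is the point at which Arzel\`a–Ascoli (as in \cite[Corollary~3.11]{BH99}) applies: we extract a further subsequence along which $f_n$ converges uniformly to a $1$-Lipschitz map $f\colon[0,\mathrm{diam}(\m,\d)]\to\m$.

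The restriction of $f$ to $[0,L]$ is an isometry onto its image: for any $s,t\in[0,L]$ we may pick $s_n,t_n\in[0,L_n]$ with $s_n\to s$ and $t_n\to t$, and then $\d(f(s),f(t))=\lim_n\d(f_n(s_n),f_n(t_n))=\lim_n|s_n-t_n|=|s-t|$. Hence $\g:=f([0,L])$ is a geodesic segment between $x$ and $y$. Since uniform convergence of the parametrizations entails Hausdorff convergence of their images (the images of $f_n\!\restriction_{[0,L_n]}$ and of $f$ are within Hausdorff distance $\sup_{t\le L\wedge L_n}\d(f_n(t),f(t))+|L_n-L|$), we obtain $\g_n\to\g$ in Hausdorff topology along this subsequence. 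By uniqueness of Hausdorff limits, $A=\g$, which is a geodesic segment.

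The one mild subtlety to keep track of is the degenerate case $L=0$, in which the limit $A$ reduces to a single point; this is still a geodesic segment under the convention adopted in the paper (a geodesic segment is isometric to a compact interval, and $[0,0]$ is allowed). Everything else is bookkeeping: the only real ingredient is the equicontinuity of the family $\{f_n\}$, which is automatic from the isometry property, combined with the compactness of $\m$.
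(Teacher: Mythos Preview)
Your argument is correct and follows exactly the approach the paper indicates: the paper does not spell out a proof but simply notes that the lemma is a straightforward consequence of the Arzel\`a--Ascoli theorem (citing \cite[Corollary~3.11]{BH99}), and your write-up supplies precisely those details via Blaschke selection plus equicontinuity of arc-length parametrizations. The only minor remark is that the degenerate case $L=0$ is indeed covered by the paper's definition of a geodesic segment as a set isometric to a compact interval.
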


Our key result, Proposition~\ref{P_conf-point}, is 
related to the fact that many sequences of geodesic
segments in the Brownian map converge in a 
stronger sense.

\begin{defn}
  We say that a sequence of geodesic segments $[x_n,y_n]$ \emph{converges
    strongly} to $[x,y]$, and write $[x_n,y_n] \tto [x,y]$, if $x_n\to x$,
  $y_n\to y$, and for any geodesic segment $[x',y'] \subset (x,y)$
  (excluding the endpoints) we have that $[x',y'] \subset [x_n,y_n]$ for
  all sufficiently large $n$.
\end{defn}

Strong convergence is stronger than convergence in the Hausdorff topology.
Indeed, if $x',y'$ are $\eps$ away from $x,y$ along $[x,y]$, then for large
$n$ $[x',y']\subset [x_n,y_n]$.  Moreover, since
$d(x_n,x') \leq d(x_n,x) + \eps$ for all such $n$, $[x_n,x']$ is eventually contained in
$B(x,2\eps)$.  Similarly, $[y',y_n]$ is eventually contained in
$B(y,2\eps)$.  In the Euclidean plane, or generic smooth manifolds, strong
convergence does not occur.  In contrast, in the Brownian map it is the
norm, as we shall see below. In light of this we also make the following
definition.

\begin{defn}
  A geodesic segment $\gamma$ is called a 
  \emph{stable geodesic} if whenever
  $[x_n,y_n]\to \gamma$ we also have $[x_n,y_n] \tto \gamma$.  Otherwise,
  $\gamma$ is called a \emph{ghost geodesic}.
\end{defn}

\begin{pro}\label{P_key}
  Almost surely, for $\lam$-almost every $x\in\m$, for all $y\in\m$,
  all sub-segments of all geodesic segments $[x,y]$ are stable. 
\end{pro}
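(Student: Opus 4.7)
The plan is to leverage Proposition~\ref{P_conf-point} (confluence at typical $x$) and Lemma~\ref{sec:geodesic-networks} (uniqueness of $[x,z]$ for $z$ in the relative interior of a geodesic from $x$) in order to reduce the stability of a sub-segment to the stability of geodesics emanating from the typical endpoint $x$. Fix typical $x$ so that Proposition~\ref{P_conf-point} and regularity of $(x,z)$ for every $z$ both hold. Let $\gamma=[x,y]$, let $\sigma=[x',y']\subset(x,y)$ be a sub-segment, and suppose $[x_n,y_n]\to\sigma$ in the Hausdorff topology. Fix any $[a,b]\subset(x',y')$; the aim is to prove $[a,b]\subset[x_n,y_n]$ for all large $n$. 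By Lemma~\ref{sec:geodesic-networks} the geodesics $[a,b]$, $[x,x']$, $[x,y']$, $[x,a]$ and $[x,b]$ are uniquely determined sub-segments of $\gamma$; combined with Lemma~\ref{L_AA} (Arzel\`a--Ascoli), this uniqueness forces $[x,x_n]\to[x,x']$ and $[x,y_n]\to[x,y']$ in the Hausdorff topology.

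The main step is to show that $[x_n,y_n]$ is eventually a sub-segment of $[x,y_n]$, equivalently that the excess $\delta_n:=d(x,x_n)+d(x_n,y_n)-d(x,y_n)$ vanishes for all large $n$. One has $\delta_n\ge 0$ and $\delta_n\to 0$ because $d(x,x')+d(x',y')=d(x,y')$. To preclude $\delta_n>0$, I invoke Proposition~\ref{P_conf-point} at $x$ for neighbourhoods $N$ containing neither $x_n$ nor $y_n$ (which holds once $N$ is small and $n$ is large): both $[x,x_n]$ and $[x,y_n]$ must pass through the associated confluence point $x_0\in N\cap\gamma$, and by Lemma~\ref{sec:geodesic-networks} their initial sub-segments $[x,x_0]$ coincide with the corresponding piece of $\gamma$. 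Exhausting $x$ by an increasing family of neighbourhoods and tracking the confluence points that arise along $\gamma$, one propagates this coincidence all the way out to $x_n$, forcing $x_n\in[x,y_n]$ and hence $\delta_n=0$.

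Once $[x_n,y_n]$ is identified as the sub-segment of $[x,y_n]$ running from $x_n$ to $y_n$, the problem reduces to showing $a,b\in[x,y_n]$; this will suffice because $d(x,a),d(x,b)>d(x,x_n)$ for large $n$, so $a$ and $b$ automatically lie past $x_n$ on $[x,y_n]$. An analogous confluence argument at $x$, using the uniqueness of $[x,a]$ and $[x,b]$, pins the initial portions of $[x,y_n]$ to the corresponding sub-segments of $\gamma$ and thus yields $a,b\in[x,y_n]$ and hence $a,b\in[x_n,y_n]$. The uniqueness of $[a,b]$ then gives $[a,b]\subset[x_n,y_n]$, as desired. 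The residual cases where $\sigma$ shares an endpoint with $\gamma$ are handled by the same reduction with minor modifications.

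The principal obstacle is the vanishing of $\delta_n$ for all large $n$. Confluence at a single scale only ensures that $[x,x_n]$ and $[x,y_n]$ agree on an initial sub-segment of length bounded by the diameter of $N$, which is not enough by itself. The rigidity needed to promote this to the full coincidence beyond $x_n$ comes from the fact that Proposition~\ref{P_conf-point} supplies confluence points at \emph{every} scale, each of which necessarily lies on $\gamma$ and on the geodesics $[x,x_n]$, $[x,y_n]$; combining these with the uniqueness from Lemma~\ref{sec:geodesic-networks} and passing to the limit as the scale increases should close the argument.
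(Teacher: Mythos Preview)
Your proposal is circular. In the paper, Proposition~\ref{P_conf-point} is \emph{derived from} Proposition~\ref{P_key}: one first proves Lemma~\ref{L_parallel} (the heart of Proposition~\ref{P_key}), uses it to deduce Lemma~\ref{L_near}, and only then obtains Proposition~\ref{P_conf-point} by combining Lemma~\ref{L_near} with Lemma~\ref{L_LG} and compactness. There is no independent route to Proposition~\ref{P_conf-point}; the existence of a confluence point robust under perturbation of \emph{both} endpoints is precisely what stability of sub-segments is needed to establish. So invoking Proposition~\ref{P_conf-point} at the outset assumes what you are trying to prove.

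Even setting the circularity aside, the propagation step does not go through. Proposition~\ref{P_conf-point} applied to a neighbourhood $N$ of $x$ only produces a confluence point $x_0\in N$, and it controls geodesics from $N'$ to $N^c$. Enlarging $N$ does not push $x_0$ outward toward $x_n$; nothing prevents $x_0$ from staying close to $x$ for every admissible $N$. Thus you never force $[x,x_n]$ and $[x,y_n]$ to coincide all the way out to $x_n$, and the claim $\delta_n=0$ remains unproved. The assertion that ``tracking the confluence points \ldots\ one propagates this coincidence all the way out to $x_n$'' is exactly the missing content.

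The paper's actual argument (Lemma~\ref{L_parallel}) is topological rather than metric. One fixes a homeomorphism sending $\gamma$ to $[0,1]\subset\hat{\mathbb C}$, uses Lemma~\ref{L_at} (which depends only on simple geodesics, not on Proposition~\ref{P_key}) to build geodesics $[u_\ell,v_\ell]$ and $[u_r,v_r]$ between typical points on the left and right of $\gamma$ that share a long common sub-segment with $\gamma$, and then applies the Jordan curve theorem: since $[u_\ell,v_\ell]$ and $[u_r,v_r]$ are unique geodesics between their endpoints, the approximating geodesics $\gamma_n$ cannot cross into the regions $H_\ell$, $H_r$ they bound, and are therefore forced to contain the common sub-segment. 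This planar trapping is the key idea your proposal lacks.
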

 
Proposition~\ref{P_conf-point} follows by 
Proposition~\ref{P_key}, the confluence of geodesics 
phenomenon, and the fact that $(\m,\d)$ is almost 
surely compact (see Section~\ref{S_near}).

In closing, we remark that it would be interesting to know if
Proposition~\ref{P_key} holds for all $x\in\m$, that is, are all geodesics
in $\m$ stable, or are there any ghost geodesics?  Ghost geodesics have
various properties, and in particular they intersect every other geodesic
in at most one point.  It would be quite surprising if such geodesics
exist, and we hope to rule them out in future work.  We thus expect an
analogue of Proposition~\ref{P_conf-point} to hold for all $x\in\m$. If so,
then as a consequence, we would obtain the following result.

\begin{con}
  Almost surely, the geodesic framework of the Brownian map, $F\subset\m$,
  is of Hausdorff dimension 1.
\end{con}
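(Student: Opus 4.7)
The plan is to derive the conjecture from the hypothesis, raised just above the statement, that the analogue of Proposition~\ref{P_conf-point} holds for every $x\in\m$ (equivalently, that every geodesic segment in $\m$ is stable). The lower bound $\H F\ge 1$ is immediate, since $F$ contains every open geodesic segment, which is isometric to an open interval. For the upper bound, the strategy is to show $F\subset \bigcup_{x\in D}\geo(x)$ for a countable dense set $D\subset\m$ of $\lam$-typical points. Since, by Le Gall's classification~\cite{LG10} combined with invariance of the Brownian map under uniform re-rooting, each $\geo(x)$ with $x\in D$ has Hausdorff dimension $1$, such an inclusion yields $\H F\le 1$.

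Fix $y\in F$ lying in the relative interior of some geodesic segment $[a,b]$. Applying the conjectured strong form of Proposition~\ref{P_conf-point} at $a$ with a neighbourhood $N_a\ni a$ so small that $y,b\notin N_a$ yields a sub-neighbourhood $N_a'\subset N_a$ and a confluence point $x_0\in N_a\setminus N_a'$. Since $[a,b]$ is a geodesic from $a\in N_a'$ to $b\in N_a^c$, it must pass through $x_0$. A symmetric application at $b$ produces $y_0\in [a,b]$, and for $N_a,N_b$ chosen sufficiently small (and disjoint) the points $x_0$ and $y_0$ lie on $[a,b]$ with $y$ strictly between them. By density of $D$, we may select typical $x_n\in N_a'\cap D$ and $y_n\in N_b'\cap D$ with $x_n\to a$ and $y_n\to b$; every geodesic $[x_n,y_n]$ is then forced through both $x_0$ and $y_0$.

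By Lemma~\ref{L_AA}, after extracting a subsequence we have $[x_n,y_n]\to\gamma$ in the Hausdorff sense, where $\gamma$ is a geodesic segment from $a$ to $b$ passing through $x_0$ and $y_0$. Under the stability hypothesis, this convergence is in fact strong: $[x_n,y_n]\tto\gamma$. Consequently, every sub-segment strictly interior to $\gamma$ is eventually contained in $[x_n,y_n]$. If we can argue that $\gamma$ coincides with $[a,b]$ on the arc between $x_0$ and $y_0$, then in particular $y$ lies in the relative interior of $\gamma$, whence $y\in [x_n,y_n]$ for all large $n$; since $y\ne x_n$ and $y\ne y_n$, this places $y$ in $\geo(x_n)\subset\bigcup_{x\in D}\geo(x)$, as desired.

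The main obstacle is the final step, identifying $\gamma$ with $[a,b]$ between $x_0$ and $y_0$: since $(a,b)$ is not assumed regular, there may be several geodesics between $x_0$ and $y_0$, and $\gamma$ could a priori differ from $[a,b]$ on this arc. To resolve this, one combines Lemma~\ref{sec:geodesic-networks} applied to the regular typical pair $(x_n,y_n)$ (giving uniqueness of the sub-geodesic $[x_n,z]$ for every $z$ in the interior of $[x_n,y_n]$) with iterated applications of the strong confluence hypothesis at interior points of $[a,b]$ straddling $y$. Each such application produces additional confluence points on $[a,b]$ through which $\gamma$ must pass by taking Hausdorff limits, and by refining the neighbourhoods one forces $\gamma$ to agree with $[a,b]$ on an arbitrarily small arc around $y$. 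Combined with the reasoning above, this yields $F\subset\bigcup_{x\in D}\geo(x)$ and hence $\H F\le 1$.
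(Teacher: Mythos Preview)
This statement is a \emph{conjecture} in the paper, not a theorem; the paper does not supply a proof. The intended derivation under the hypothesis ``Proposition~\ref{P_conf-point} holds for every $x\in\m$'' is the obvious extension of the proof of Proposition~\ref{P_netLam}: under that hypothesis the proof of Theorem~\ref{T_net-cts} goes through for every $x$, and then the Lindel\"of covering argument already written for $T$ applies verbatim with $T$ replaced by $\m$, giving $\H F=\H\bigcup_{x\in\m}\geo(x)=1$. Your route---showing directly that $F\subset\bigcup_{x\in D}\geo(x)$ for a countable dense $D\subset T$---is a legitimate alternative, but the execution has an unnecessary detour and a gap.

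The detour is the passage to a Hausdorff limit $\gamma$ of $[x_n,y_n]$ and the appeal to stability. You never need this. Once you know that a geodesic $[x_n,y_n]$ with $x_n\in D$ passes through both confluence points $x_0$ and $y_0$, you are already done: since $x_n$ is typical, $(x_n,y_n)$ is regular, and Lemma~\ref{sec:geodesic-networks} says that any two interior points of $[x_n,y_n]$ are joined by a \emph{unique} geodesic. In particular the sub-segment of $[x_n,y_n]$ between $x_0$ and $y_0$ is the unique geodesic between those two points. But $[a,b]$ also contains a geodesic from $x_0$ to $y_0$, so these segments coincide; hence $y\in[x_n,y_n]$ and $y\in\geo(x_n)$ immediately, for any single choice of typical $x_n\in N_a'\cap D$. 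No sequences, no limits, no stability.

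The gap is in your proposed ``iterated applications'' fix. Applying the strong confluence hypothesis at an interior point $z$ of $[a,b]$ produces a confluence point $z_0$ for geodesics \emph{emanating from a neighbourhood of $z$}; it says nothing about a geodesic $\gamma$ from $a$ to $b$ unless you already know $\gamma$ enters that neighbourhood, which is precisely what you are trying to establish. This argument, as written, is circular. You already cite the ingredient that actually closes the gap (Lemma~\ref{sec:geodesic-networks} applied to the regular pair $(x_n,y_n)$), but you do not notice that it suffices on its own.
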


In this way, we suspect that although the Brownian 
map is a complicated object of Hausdorff dimension 4,
it has a relatively simple geodesic framework
which is of first Baire category (Theorem~\ref{T_frame}) 
and Hausdorff dimension 1.

\section{Preliminaries}\label{S_prelims}

In this section, we briefly recount the construction 
of the Brownian map and what is known regarding its geodesics.

\subsection{The Brownian map}\label{S_BM}

Fix $q\in\{3\}\cup2(\NN+1)$ and set $c_q$ equal to 
$6^{1/4}$ if $q=3$ or $(9/q(q-2))^{1/4}$ if $q>3$. 
Let $M_n$ denote a uniform $q$-angulation of the sphere 
(see Le Gall and Miermont~\cite{LGM12}) with $n$ faces, and 
$d_{n}$ the graph distance on $M_n$ scaled by $c_q n^{-1/4}$.
The works of Le Gall~\cite{LG13} and Miermont~\cite{M13} 
(for $q=4$) show that in the Gromov-Hausdorff topology on 
isometry classes of compact metric spaces 
(see Burago, Burago and Ivanov~\cite{BBI01}), 
$(M_n,d_n)$ converges in distribution to a random 
metric space called the \emph{Brownian map} $(\m,\d)$.

The Brownian map has also been identified as the 
  scaling limit of 
  several other types of maps, 
  see~\cite{A16,ABA13,BLG13,BJM13,LG13}.

The construction of the Brownian map involves a normalized 
Brownian excursion $\ee=\{\ee_t:t\in[0,1]\}$, a random 
$\RR$-tree $(\Te,d_{\ee})$ indexed by $\ee$, and a Brownian 
label process $Z=\{ Z_a: a\in \Te \}$. More specifically, define 
$\Te=[0,1]/\{\dd=0\}$ as the quotient under the pseudo-distance 
\[
\dd(s,t)
=\ee_s+\ee_t-2\cdot\min_{s\wedge t\le u\le s\vee t}\ee_u,
\quad s,t\in[0,1]
\]
and equip it with the quotient distance, again denoted by $\dd$. 
The random metric space $(\Te,\dd)$ is Aldous' 
\emph{continuum random tree}, or CRT. Let $\pp:[0,1]\to\Te$ 
denote the canonical projection. Conditionally given $\ee$, 
$Z$ is a centred Gaussian process satisfying 
${\bf E}[(Z_s-Z_t)^2]=\dd(s,t)$ for all $s,t \in [0,1]$. 
The random process $Z$ is the so-called 
\emph{head of the Brownian snake} (see~\cite{LGM12}). 
Note that $Z$ is constant on each equivalence class 
$\pp^{-1}(a)$, $a\in\Te$. In this sense, $Z$ is Brownian 
motion indexed by the CRT.

Analogously to the definition of $d_\ee$, we put
\[
d_Z(s,t)
=Z_s+Z_t-2\cdot
\max\left\{ \inf_{u\in[s,t]}Z_u, \inf_{u\in[t,s]}Z_u \right\},
\quad s,t\in[0,1]
\]
where we set $[s,t]=[0,t]\cup[s,1]$ in the case that $s>t$.
Then, to obtain a pseudo-distance on $[0,1]$, we define
\[
D^*(s,t)
=\inf\left\{\sum_{i=1}^k d_Z(s_i,t_i) : 
s_1=s,t_k=t, d_\ee(t_i,s_{i+1})=0\right\}, 
\quad s,t\in[0,1].
\]
 
Finally, we set $\m=[0,1]/\{D^*=0\}$ and endow it with 
the quotient distance induced by $D^*$, which we denote 
by $d$. An easy property of the Brownian map is that $d_\ee(s,t)=0$
implies $D^*(s,t)=0$, so that $\m$ can also be seen as a quotient of
$\Te$, and we let $\Pi:\Te\to \m$ denote the canonical projection, 
and put ${\bf p}=\Pi\circ \pp$. Almost surely, the process $Z$ 
attains a unique minimum on $[0,1]$, say at $t_*$. We set 
$\rho={\bf p}(t_*)$. The random metric space 
$(\m,\d)=(\m,\d,\rho)$ is called the \emph{Brownian map} 
and we call $\rho$ its \emph{root}. Being the 
Gromov-Hausdorff limit of geodesic spaces, 
$(\m,\d)$ is almost surely a geodesic space 
(see~\cite{BBI01}).

Almost surely, for every pair of distinct points $s\neq t\in[0,1]$, at
most one of $d_\ee(s,t)=0$ or $d_Z(s,t)=0$ holds, except in the
particular case $\{s,t\}=\{0,1\}$ where both identities hold
simultaneously (see~\cite[Lemma 3.2]{LGP08}).  Hence, only leaves
(that is, non-cut-points) of $\Te$ are identified in the construction
of the Brownian map; and this occurs if and only if they have the same
label and along either the clockwise or counter-clockwise,
contour-ordered path around $\Te$ between them, one only finds
vertices of larger label.  Thus, as mentioned at the beginning of
Section~\ref{S_intro}, in the construction of the Brownian map,
$(\Te,Z)$ is a continuum analogue for a well-labelled plane tree, and
the quotient by $\{D^*=0\}$ for the CVS-bijection (which, as discussed in 
Section~\ref{S_intro}, 
identifies well-labelled plane trees with rooted planar maps).  See
Section~\ref{S_simple} for more details.

Lastly, we note that although the Brownian map is a rooted 
metric space, it is not so dependent on its root. The volume 
measure $\lam $ on $\m$ is defined as the push-forward of 
Lebesgue measure on $[0,1]$ via $\PP$. Le Gall~\cite{LG10} 
shows that the Brownian map is \emph{invariant under re-rooting} 
in the sense that if $U$ is uniformly distributed over $[0,1]$ and 
independent of $(\m,\d)$, then $(\m,\d,\rho)$ and $(\m,\d,\PP(U))$ 
are equal in law. Hence, to some extent, the root of the 
map is but an artifact of its construction.

\subsection{Simple geodesics}\label{S_simple}

Recall that a \emph{corner} of a vertex $v$ in a discrete 
plane tree $T$ is a sector centred at $v$ and 
delimited by edges which precede and follow $v$ 
along a contour-ordered path around $T$.
Leaves of a tree have exactly one corner, and in general,
the number of corners of $v$ is equal to the number
of connected components in $T-\{v\}$. Similarly, we may 
view the $\RR$-tree $\Te$ as having corners, 
however in this continuum setting all sectors reduce 
to points. Hence, for the purpose of the 
following (informal) discussion, let us think of  
each $t\in[0,1]$ as corresponding to a \emph{corner} of $\Te$ 
with label $Z_t$. 

Put $Z_*=Z_{t_*}$. As it turns out, $d(\rho,\PP(t))=Z_t-Z_*$ for 
all $t\in[0,1]$ (see~\cite{LG07}). In other words, up to a shift by 
the minimum label $Z_*$, the Brownian label of a point in $\Te$ 
is precisely the distance to $\rho$ from the corresponding point 
in the Brownian map.

All geodesics to $\rho$ are \emph{simple geodesics}, 
constructed as follows. For $t\in[0,1]$ and $\ell\in[0,Z_t-Z_*]$, 
let $s_t(\ell)$ denote the point in $[0,1]$ corresponding to the 
first corner with label $Z_t-\ell$ in the clockwise, contour-ordered 
path around $\Te$ beginning at the corner corresponding to $t$. 
For each such $t$, the image of the
function $\Gamma_t:[0,Z_t-Z_*]\to\m$ 
taking $\ell$ to $\PP(s_t(\ell))$ is a geodesic 
segment
from $\PP(t)$ to 
$\rho$. Moreover, the main result of~\cite{LG10} shows that all 
geodesics to $\rho$ are of this form. Hence, the geodesic net 
of the root, $\geo(\rho)$, is precisely the set of cut-points of the 
$\RR$-tree $\TZ=[0,1]/\{d_Z=0\}$ projected into $\m$. 

These results mirror the fact that from each corner of a 
labelled, discrete plane tree, the CVS-bijection draws 
geodesics to the root of the resulting map in such a way 
that the label of a vertex visited by any such geodesic 
equals the distance to the root. 
See~\cite{LG14,LG10} for further details.

Moreover, since the cut-points of $\Te$ are its vertices with 
multiple corners, we see that the set $\mul(\rho)$ 
(discussed in Section~\ref{S_cut}) of points with multiple 
geodesics to $\rho$ is exactly the set of cut-points of the 
$\RR$-tree $\Te=[0,1]/\{\dd=0\}$ projected into $\m$.

Furthermore, since points in $\mul(\rho)$ correspond to 
leaves of $\TZ$ (see~\cite[Lemma 3.2]{LGP08}), geodesics 
to the root of the map (or a typical point, by invariance 
under re-rooting)
have a particular topological structure,
as discussed in Section~\ref{S_networks}.
We state this here for the record.

\begin{pro}\label{P_reg}
  Almost surely, for $\lambda$-almost every $x$,
  for all $y\in\m$, $(x,y)$ is regular. 
\end{pro}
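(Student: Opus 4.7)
The plan is to reduce to $x = \rho$ by re-rooting invariance, and then to exploit Le Gall's explicit description of simple geodesics to $\rho$ together with the confluence of geodesics phenomenon. By the invariance of $(\m,\d)$ under re-rooting at $\PP(U)$ for an independent uniform $U \in [0,1]$ (Section~\ref{S_BM}), a Fubini-type argument reduces the claim to showing that, almost surely, the pair $(\rho, y)$ is regular for every $y \in \m$.

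To verify this, fix $y \in \m$. By the classification recalled in Section~\ref{S_simple}, any geodesic from $y$ to $\rho$ is a simple geodesic $\Gamma_t$ with $t \in \PP^{-1}(y)$; since $|\PP^{-1}(y)| \le 3$ almost surely for all $y$, there are at most three such geodesics. The case of a single geodesic is trivial, so let $\Gamma, \Gamma'$ be two distinct simple geodesics from $y$ to $\rho$, parametrized by arclength from $y$. Any intersection of $\Gamma$ and $\Gamma'$ occurs at equal arclength parameters, since a point at parameter $\ell$ lies at distance $\ell$ from $y$. By the confluence of geodesics phenomenon (Section~\ref{S_at}), $\Gamma$ and $\Gamma'$ meet and coalesce at some point $w$ with $d(y, w) < d(y, \rho)$. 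Let $\ell_0 = \inf\{\ell > 0 : \Gamma(\ell) = \Gamma'(\ell)\}$; by continuity, $w := \Gamma(\ell_0) = \Gamma'(\ell_0)$. Applying confluence a second time to the continuations $\Gamma|_{[\ell_0, d(y, \rho)]}$ and $\Gamma'|_{[\ell_0, d(y, \rho)]}$---both simple geodesics from $w$ to $\rho$ already agreeing at $w$---one obtains $\Gamma(\ell) = \Gamma'(\ell)$ for all $\ell \in [\ell_0, d(y, \rho)]$. Taking $r = \ell_0$ then verifies regularity: $r \in (0, d(y, \rho))$ since $w \neq \rho$; the geodesics coincide outside $B(y, r)$ (each equals the common segment from $w$ to $\rho$); and they are disjoint in $B(y, r) - \{y\}$ by minimality of $\ell_0$.

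The main obstacle is justifying the coalescence-from-first-meeting step cleanly and uniformly in $y$. One must rule out the scenario that $\Gamma$ and $\Gamma'$ intersect at isolated points before ultimately coalescing at a later time. I would handle this by identifying the two continuations past each meeting point with the simple geodesics associated to specific corners of the meeting point in the CRT, then invoking the uniform bound $|\PP^{-1}| \le 3$ to rule out an infinite cascade of meet-and-diverge behaviour. The entire argument then runs on a single event of full probability on which Le Gall's classification, the bound on corner multiplicities, and the confluence phenomenon all hold.
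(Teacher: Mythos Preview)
Your reduction to $x=\rho$ via re-rooting and your use of Le Gall's classification are correct and match the paper. But the coalescence step has a genuine gap that your proposed repair does not close. You claim that ``applying confluence a second time'' to the continuations from the first meeting point $w$ forces agreement on all of $[\ell_0,d(y,\rho)]$; confluence (Lemma~\ref{L_LG}) only gives agreement inside some small ball around $\rho$, not near $w$. Your fix via $|\PP^{-1}|\le 3$ does not help: that bound is per meeting point and does not limit how many distinct meeting points can occur along the geodesic. Worse, even a \emph{single} meet-then-diverge event already violates regularity: if the geodesics touch at $\ell_0$, separate, and finally coalesce at $\ell_1>\ell_0$, then no radius $r$ works (for $r\ge\ell_1$ they intersect at $\Gamma(\ell_0)$ inside the punctured ball; for $r<\ell_1$ they differ outside it). So you must exclude \emph{any} such event, not merely infinitely many.

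The missing ingredient, which is precisely what the paper's discussion preceding the proposition is invoking, is the disjointness $\geo(\rho)\cap \mul(\rho)=\emptyset$: the net $\geo(\rho)$ is the image of the cut-points of $\TZ$, the cut locus $\mul(\rho)$ is the image of the cut-points of $\Te$, and by \cite[Lemma~3.2]{LGP08} cut-points of one tree are leaves of the other. Hence if $\Gamma(\ell)=\Gamma'(\ell)=w$ for some $\ell\in(0,d(y,\rho))$, then $w\in\geo(\rho)$, so $w\notin \mul(\rho)$, so there is a \emph{unique} geodesic from $w$ to $\rho$; both continuations must equal it, giving immediate and permanent coalescence. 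Regularity then follows with $r=\ell_0$ (and $\ell_0>0$, since otherwise the same argument forces $\Gamma=\Gamma'$).
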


Hence, as mentioned in Section~\ref{S_cut}, we have 
that $\mul(\rho)=\cut(\rho)$. That is, all points with multiple 
geodesics to the root 
are in the strong cut locus of the root.

\subsection{Confluence at the root}\label{S_at}

As discussed in Section~\ref{S_net}, a 
\emph{confluence of geodesics} is observed at the root 
of the Brownian map. Combining this with invariance
under re-rooting, the following result is obtained.

\begin{lem}[{Le Gall~\cite[Corollary 7.7]{LG10}}]\label{L_LG}
  Almost surely, for $\lam $-almost every $x\in\m$, the 
  following holds. For every $\eps>0$ there is an 
  $\eta\in(0,\eps)$ so that if $y,y'\in B(x,\eps)^c$, 
  then any pair of geodesics from $x$ to $y$ and $
  y'$ coincide inside of $B(x,\eta)$.
\end{lem}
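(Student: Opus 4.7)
The plan is to establish this confluence property at a $\lam$-typical point by transferring the corresponding statement at the root $\rho$ via invariance under re-rooting. So the lemma really follows from two ingredients: confluence of geodesics at the root, and invariance of $(\m,\d,\rho)$ under uniform re-rooting by $\lam$.

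First, I would take as input the confluence of geodesics at the root: almost surely, for every $\eps>0$ there exists $\eta\in(0,\eps)$ such that any two geodesics from $\rho$ to points outside $B(\rho,\eps)$ coincide inside $B(\rho,\eta)$. The key ingredient is the description, recalled in Section~\ref{S_simple}, of every geodesic from any point of $\m$ to $\rho$ as a simple geodesic $\Gamma_t$, together with the almost sure uniqueness of the minimiser $t_*$ of the label process $Z$ on $[0,1]$. As the parameter $\ell$ approaches the terminal value $Z_t-Z_*$, the exploration point $s_t(\ell)$ must converge to $t_*$, and a direct comparison of two such maps shows that they agree on a common initial portion near $t_*$, with length quantified uniformly in the starting index $t$ provided its label stays above $Z_*+\eps$.

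Next, I would invoke invariance under re-rooting. Let $U$ be uniform on $[0,1]$ and independent of $(\m,\d)$, so that $(\m,\d,\PP(U))$ and $(\m,\d,\rho)$ are equal in law. Applying the root confluence statement to the re-rooted Brownian map $(\m,\d,\PP(U))$ shows that almost surely the confluence property of the lemma holds at $\PP(U)$. Since the push-forward of Lebesgue measure on $[0,1]$ by $\PP$ is $\lam$, a Fubini argument on the product space of $(\omega,U)$ converts this into the statement that, almost surely, the confluence property holds at $\lam$-almost every $x\in\m$, which is precisely what is claimed.

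The main obstacle will be the quantitative control in the root confluence, namely producing $\eta$ uniformly over all pairs of geodesics ending at $\rho$. This is precisely the content of \cite[Corollary~7.7]{LG10}, and rests on the compactness of $\m$ (guaranteed by its almost sure homeomorphism to $\SS^2$) together with the continuous dependence of $\Gamma_t$ on its starting index $t$ away from the minimiser $t_*$. Once the quantitative root statement is in hand, the transfer to $\lam$-typical points via re-rooting invariance is a soft measure-theoretic step.
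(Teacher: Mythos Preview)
Your proposal is correct and matches the paper's approach: the paper does not give a self-contained proof but simply cites the result and indicates it follows by combining confluence of geodesics at the root with invariance under re-rooting, which is exactly the two-step strategy you outline. Your identification of the quantitative root confluence as the substantive ingredient (handled in \cite{LG10}) and the Fubini transfer as the soft step is accurate.
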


Moreover, geodesics to the root of the map 
tend to
coalesce quickly.

For $t\in[0,1]$, let $\g_t$ denote the image of the simple 
geodesic $\Gamma_t$ from $\PP(t)$ to the root of the map 
$\rho$ (see Section~\ref{S_simple}).

\begin{lem}[Miermont {\cite[Lemma 5]{M13}}]\label{L_M}
  Almost surely, for all $s,t\in[0,1]$, $\g_s$ and 
  $\g_t$ coincide outside of $B(\PP(s),d_Z(s,t))$.
\end{lem}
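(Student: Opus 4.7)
Set $L = \max\{m_1, m_2\}$ where $m_1 = \inf_{u \in [s,t]} Z_u$ and $m_2 = \inf_{u \in [t,s]} Z_u$, so that $d_Z(s, t) = Z_s + Z_t - 2L$. The plan is to exhibit a \emph{coalescence point} $x^* \in \m$ at label $L$ through which both $\g_s$ and $\g_t$ pass, show that the two geodesics coincide from $x^*$ to $\rho$, and verify that both initial portions $[\PP(s), x^*]$ and $[\PP(t), x^*]$ lie in $\overline{B(\PP(s), d_Z(s,t))}$. Let $u^* = s_s(Z_s - L)$ and $v^* = s_t(Z_t - L)$ denote the first clockwise corners from $s$ and $t$ respectively with label exactly $L$, well defined since $Z$ attains $L$ on each of the two arcs.

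For the identification $\PP(u^*) = \PP(v^*) =: x^*$, it suffices to find a cyclic arc in $[0,1]$ joining $u^*$ and $v^*$ along which $Z \ge L$: then $d_Z(u^*, v^*) = 2L - 2L = 0$, hence $D^*(u^*, v^*) = 0$. If $L = m_1$, the arc through $t$ works, since $Z \ge L = m_1$ on $[u^*, t] \subset [s,t]$ and $Z > L$ on the clockwise path from $t$ to $v^*$ by the minimality in the definition of $v^*$. If $L = m_2$, the arc through $0,1$ works by the symmetric argument, using $Z \ge m_2 = L$ on $[t,1] \cup [0,s]$ together with $Z > L$ on the clockwise path from $s$ to $u^*$.

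Next, I show that $\g_s$ and $\g_t$ coincide from $x^*$ to $\rho$: for any $\lambda \in [Z_*, L)$, both the first clockwise corner from $s$ and from $t$ with label $\lambda$ must lie past the label-$L$ arc identified above, so both equal the first clockwise corner (from a common gateway) with label $\lambda$ — namely $v^*$ if $L = m_1$, or the first label-$\lambda$ corner just past $u^*$ in $(u^*, t)$ if $L = m_2$. Hence $\Gamma_s(Z_s - \lambda) = \Gamma_t(Z_t - \lambda)$ for all $\lambda \le L$, so the portions of $\g_s$ and $\g_t$ beyond $x^*$ agree pointwise. Finally, $[\PP(s), x^*] \subset \g_s$ has length $Z_s - L \le d_Z(s,t)$; and for $y \in [\PP(t), x^*]$, the triangle inequality yields $d(y, \PP(s)) \le d(y, x^*) + d(x^*, \PP(s)) \le (Z_t - L) + (Z_s - L) = d_Z(s,t)$. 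Hence $\g_s \triangle \g_t \subset \overline{B(\PP(s), d_Z(s,t))}$, proving the lemma.

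The main obstacle is the identification $\PP(u^*) = \PP(v^*)$, which hinges on exhibiting the correct cyclic arc with $Z \ge L$. The case split on $L = m_1$ versus $L = m_2$ reflects the asymmetry between the one-sided (clockwise) construction of simple geodesics and the symmetric maximum defining $L$. Once this identification is in hand, the common-tail property and the ball containment follow rather directly from the first-clockwise-corner description of simple geodesics and the triangle inequality.
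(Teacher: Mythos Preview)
The paper does not supply its own proof of this lemma; it is quoted verbatim from Miermont~\cite[Lemma~5]{M13}, so there is no in-paper argument to compare against. Your proposal reproduces the standard proof: locate the coalescence point $x^*$ at label $L=\max\{m_1,m_2\}$ via the identification $d_Z(u^*,v^*)=0$, then use the first-clockwise-corner description to see that $\Gamma_s$ and $\Gamma_t$ agree beyond $x^*$, and finally bound the two initial segments by the triangle inequality. The case split on whether $L=m_1$ or $L=m_2$ is handled correctly, and the identification $\PP(u^*)=\PP(v^*)$ is justified properly.

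Two small imprecisions are worth flagging. First, in the coalescence step for the case $L=m_2$, you write that the common first label-$\lambda$ corner lies ``in $(u^*,t)$''; this is not guaranteed, since the first point past $u^*$ with label $\lambda<L$ may lie anywhere in the forward arc from $u^*$ (possibly past $t$). The conclusion is unaffected: what matters is only that both searches from $s$ and from $t$ must pass $u^*$ before encountering a label below $L$, so they agree from $u^*$ onward. Second, your final inclusion is stated for the \emph{closed} ball $\overline{B(\PP(s),d_Z(s,t))}$, whereas the lemma as written uses the open ball. In fact your own estimates give strict inequality for every point of the symmetric difference other than possibly $\PP(t)$ itself, and in any case the closed-ball version already suffices for the paper's sole application (the proof of Lemma~\ref{L_at}, where one takes $d_Z(t,t')<\eps$).
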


We require the following lemma. 

\begin{lem}\label{L_at}
  Almost surely, for $\lam $-almost every $x\in\m$, the 
  following holds. For any $y\in\m$ and 
  neighbourhood $N$ of $y$, there is a 
  sub-neighbourhood $N'\subset N$ so that if $y'\in N'$, 
  then any geodesic from $x$ to $y'$ coincides with a 
  geodesic from $x$ to $y$ outside of $N$.
\end{lem}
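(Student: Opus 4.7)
The plan is to use invariance of the Brownian map under re-rooting (Section~\ref{S_BM}) to reduce the statement to Miermont's Lemma~\ref{L_M} in a re-rooted encoding. Concretely, I would fix a $\lam$-typical $x\in\m$ so that $(\m,\d,x)$ has the law of the standard Brownian map, and work with the associated encoding by a normalised Brownian excursion and label process $\tilde Z$, with $\tilde\PP:[0,1]\to\m$ the corresponding canonical projection. By Le Gall's classification of geodesics to the root (Section~\ref{S_simple}), every geodesic segment from $x$ to any $z\in\m$ is then a simple geodesic $\g_t$ for some $t\in\tilde\PP^{-1}(z)$.

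Next, with $y\in\m$ and a neighbourhood $N$ of $y$ fixed, I would argue by contradiction. If no sub-neighbourhood $N'\subset N$ has the desired property, then there exist points $y_n\to y$ and geodesic segments $\g_n$ from $x$ to $y_n$ that do not coincide outside $N$ with any geodesic from $x$ to $y$. Writing $\g_n=\g_{t_n}$ with $\tilde\PP(t_n)=y_n$, by compactness of $[0,1]$ I extract a subsequence with $t_n\to t_\infty$. Continuity of $\tilde\PP$ then gives $\tilde\PP(t_\infty)=y$, so $\g_{t_\infty}$ is a genuine geodesic from $x$ to $y$.

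Applying Lemma~\ref{L_M} in the re-rooted picture, the simple geodesics $\g_{t_n}$ and $\g_{t_\infty}$ coincide outside the ball $B(\tilde\PP(t_n),d_{\tilde Z}(t_n,t_\infty))=B(y_n,d_{\tilde Z}(t_n,t_\infty))$. Joint continuity of $d_{\tilde Z}$ (a consequence of the formula in Section~\ref{S_BM} together with uniform continuity of $\tilde Z$ on $[0,1]$) shows this radius tends to $0$, while $y_n\to y$; hence this ball is contained in $N$ for all sufficiently large $n$. Therefore $\g_n$ agrees outside $N$ with the geodesic $\g_{t_\infty}$ from $x$ to $y$, contradicting the choice of $\g_n$, so the desired $N'$ exists.

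The main obstacle is the re-rooting reduction itself: Lemma~\ref{L_M} and Le Gall's classification of geodesics to the root are stated with the canonical root $\rho$ playing a distinguished role, and transferring them to a $\lam$-typical point $x$ requires the invariance of the pointed Brownian map under re-rooting from $\lam$. Once that is in hand, the remaining steps --- the compactness extraction in $[0,1]$ and the control on the coalescence radius coming from Lemma~\ref{L_M} --- are routine.
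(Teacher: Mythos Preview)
Your proposal is correct and essentially identical to the paper's argument. Both proofs reduce to the root $\rho$ via invariance under re-rooting, use Le~Gall's classification that all geodesics to $\rho$ are simple geodesics $\g_t$, extract a convergent subsequence of preimages $t_n\to t_\infty$ in $[0,1]$ by compactness, identify $\PP(t_\infty)=y$ by continuity of $\PP$, and then invoke Lemma~\ref{L_M} to force $\g_{t_n}$ and $\g_{t_\infty}$ to agree outside a ball of radius $d_Z(t_n,t_\infty)\to 0$; the only cosmetic difference is that the paper packages this as showing an explicitly defined set $N_\eps=\{y':\forall\, t'\in\PP^{-1}(y')\ \exists\, t\in\PP^{-1}(y),\ d_Z(t,t')<\eps\}$ is a neighbourhood of $y$, whereas you phrase the same subsequence extraction as a proof by contradiction.
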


\begin{proof}
  Let $\rho$ denote the root of the map. Let $y\in\m$ 
  and a neighbourhood $N$ of $y$ be given. Select 
  $\eps>0$ so that $B(y,\eps)\subset N$. Let $N_\eps$ 
  denote the set of points $y'\in\m$ with the property that 
  for all $t'\in[0,1]$ for which $\PP(t')=y'$, there exists some 
  $t\in[0,1]$ so that $\PP(t)=y$ and $d_Z(t,t')<\eps$. As 
  discussed in Section~\ref{S_simple}, Le Gall~\cite{LG10} 
  shows that all geodesics to $\rho$ are simple geodesics. 
  Hence, by Lemma~\ref{L_M}, any geodesic from $\rho$ to 
  a point $y'\in N_\eps$ coincides with some geodesic from 
  $\rho$ to $y$ outside of $N$. 
  
  We claim that $N_\eps$ is a 
  neighbourhood of $y$. To see this, note that if 
  $\PP(t_n)=y_n\to y$ in $(\m,\d)$, then there is a
  subsequence $t_{n_k}$ so that 
  for some $t_y\in[0,1]$, we have that
  $t_{n_k}\to t_y$ as $k\to\infty$. 
  Hence $d_Z(t_y,t_{n_k})<\eps$ for all large $k$, and
  since $\PP$ is continuous (see~\cite{LG10}), $\PP(t_y)=y$.
  Therefore, for any $y_n\to y$ in $(\m,\d)$, 
  $y_n\notin N_\eps$ for at most finitely many $n$,
  giving the claim.
  
  Hence the lemma follows by invariance under re-rooting. 
\end{proof}

We remark that the size of $N'$ in Lemma~\ref{L_at} depends strongly
on $x$ and $y$. For instance, for a fixed $\eps>0$ and convergent
sequences of typical points $x_n$ (that is, points satisfying the
statement of Lemma~\ref{L_at}) and general points $y_n$, for each $n$
let $\eta_n>0$ be such that the
statement of the 
 lemma holds for the pair $x_n,y_n$
with $N_n=B(y_n,\eps)$ and $N_n'=B(y_n,\eta_n)$.  It is quite possible
that $\eta_n\to0$ as $n\to\infty$.

\subsection{Dimensions}\label{S_dims}

Finally, we collect some facts about the dimension of various subsets of
the Brownian map.  These statements are easily derived from established
results, but are not explicitly stated in the literature.

For a metric space $X\subset\m$, let $\H X$ denote its Hausdorff dimension,
$\pack X$ its packing dimension, and $\lM X$ (resp.\ $\uM X$) its lower
(resp.\ upper) Minkowski dimension.  If the lower and upper Minkowski
dimensions coincide, we denote their common value by $\M X$.  We note that
for any metric space $X$ we have
\begin{align*}
  \H X\le \lM X\le \uM X && \text{and} && \H X\le \pack X\le \uM X.
\end{align*}
See Mattila~\cite{M95}, for instance, for detailed definitions and other
properties of these dimensions.

We require the following result, which is implicit in 
Le Gall's~\cite{LG07} proof that $\dim\m=4$. 
For completeness, we include a proof via the uniform 
volume estimates of balls in the Brownian map.

\begin{pro}\label{P_Dim-U}
  Almost surely, for any non-empty, open subset
  $U\subset\m$, we have that 
  $\lambda(U)>0$ (hence $\lambda$ has full support) and
  $\H U=\pack U=\M U=4$.
\end{pro}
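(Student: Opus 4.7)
The plan is to derive everything from the uniform ball volume estimates for the Brownian map, implicit in Le Gall's~\cite{LG07} proof that $\H\m=4$: almost surely, for every $\eps>0$ there exist random constants $c,C,r_0>0$ with
\[
c\,r^{4+\eps} \;\le\; \lam(B(x,r)) \;\le\; C\,r^{4-\eps}
\]
for all $x\in\m$ and $r\in(0,r_0)$. Granted these, everything reduces to standard dimension theory.

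First, the full support claim is easy and logically separate. Since $\PP:[0,1]\to\m$ is continuous and surjective, $\PP^{-1}(U)$ is a non-empty open subset of $[0,1]$, hence of positive Lebesgue measure, and $\lam$ is the push-forward of Lebesgue measure under $\PP$; so $\lam(U)>0$.

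Next, I would show $\H U=\M U=4$ and deduce $\pack U=4$ from the sandwich $\H U\le\pack U\le\uM U$. The Hausdorff upper bound $\H U\le\H\m=4$ is immediate. For the matching lower bound, fix $\eps>0$ and apply the mass distribution principle to $\lam|_U$: the upper ball volume bound at exponent $4-\eps$ together with $\lam(U)>0$ yields $\H U\ge 4-\eps$, and letting $\eps\downarrow 0$ gives $\H U\ge 4$.

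For the Minkowski dimensions, I would use the standard packing-covering correspondence. A maximal $r$-separated subset $\{x_i\}\subset U$ simultaneously bounds the $r$-covering number $N(U,r)$ from above by $|\{x_i\}|$ (by maximality the balls $B(x_i,r)$ cover $U$) and produces a disjoint family $\{B(x_i,r/2)\}$ in $\m$; the lower volume bound then caps the cardinality of this family by a constant multiple of $r^{-(4+\eps)}$, yielding $\uM U\le 4+\eps$. Conversely, any cover of $U$ by $r$-balls has total $\lam$-mass at most $N(U,r)\cdot Cr^{4-\eps}$, which must be at least $\lam(U)>0$, so $\lM U\ge 4-\eps$. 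Sending $\eps\downarrow 0$ closes both bounds at $4$, and the sandwich gives $\pack U=4$ as well. The only non-routine input here is the uniform ball volume estimate, which is not explicitly isolated as a lemma in~\cite{LG07} but is extracted from the proof that $\H\m=4$; this bookkeeping is the main (and essentially the only) obstacle.
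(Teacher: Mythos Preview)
Your proof is correct and follows essentially the same route as the paper: both derive everything from the uniform two-sided ball volume estimates (the paper cites \cite[Lemmas~14,~15]{M13} rather than extracting them from~\cite{LG07} directly), use the mass distribution principle for the Hausdorff lower bound, and a covering argument for the upper Minkowski bound, then close via the standard inequalities $\H\le\pack\le\uM$ and $\H\le\lM\le\uM$. The only minor deviation is that you obtain $\lam(U)>0$ from the continuity and surjectivity of $\PP$, whereas the paper reads it off directly from the lower volume bound; both are fine, and your explicit computation of $\lM U$ is redundant since $\lM U\ge\H U$ already gives it.
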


\begin{proof}
  Let a non-empty, open subset $U\subset\m$ be given.
    Fix some arbitrary $\eta>0$.

  By~\cite[Lemma 15]{M13}, there is a
  $c\in(0,\infty)$ and $\eps_0>0$ so that for all $\eps\in(0,\eps_0)$ and
  $x\in\m$, we have that $\lam(B(x,\eps))\ge c\eps^{4+\eta}$.  
  In particular, $\lambda(U)>0$.
  For $\eps>0$, let $N(\eps)$ denote the number of balls of radius $\eps$
  required to cover $\m$. 
  By a
  standard argument, it follows that 
  there exists a $c'\in(0,\infty)$ so that for all
  $\eps\in(0,2\eps_0)$ we have $N(\eps)\le c'\eps^{-(4+\eta)}$.  It follows
  directly that $\uM \m\le4+\eta$, and the same bound holds for  $U\subset
  M$. 
  
  On the other hand, by~\cite[Lemma 14]{M13} (a consequence
  of~\cite[Corollary 6.2]{LG07}), there is a $C\in(0,\infty)$ so that for
  all $\eps>0$ and $x\in\m$, we have that
  $\lambda(B(x,\eps))\le C \eps^{4-\eta}$.  In particular, for all $\eps>0$
  and $x\in U$ we have $\lambda(B(x,\eps)\cap U)\le C\eps^{4-\eta}$.  It follows
  that $\H U\ge 4-\eta$ (see, for example,
  Falconer~\cite[Exercise~1.8]{F86}).

  Since $\eta>0$ is arbitrary, the general dimension inequalities imply the
  claim.
\end{proof}

\begin{defn}
  For $x\in\m$, and $k\ge 1$ or $k=\infty$, let $\mul_k(x)$ denote the set
  of points $y\in \m$ with exactly $k$ geodesics to $x$.
\end{defn}

We believe that $\mul_\infty(x)$ is empty for all $x$. In fact, it is
plausible that all $\mul_k(x)$ are empty for all $k>k_0$ (perhaps even $k_0=9$).

In particular, the weak cut locus $S(x)$, as defined in Section~\ref{S_cut},
is equal to $S_\infty(x) \cup \bigcup_{k\geq 2}S_k(x)$. As discussed in
Section~\ref{S_networks}, by Le Gall's description of geodesics to the
root, properties of the CRT, and invariance under re-rooting, we have the
following result.

\begin{pro}\label{P_S123}
  Almost surely, for $\lambda$-almost every $x\in\m$ 
  \begin{enumerate}[nolistsep,label=(\roman*)]
  \item $S(x) = S_2(x) \cup S_3(x)$;
  \item $S_2(x)$ is dense, and has Hausdorff dimension 2 (and measure 0);
  \item $S_3(x)$ is dense and countable.
  \end{enumerate}
\end{pro}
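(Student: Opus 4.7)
My plan is to reduce to the case $x=\rho$ via invariance under re-rooting, and then to read off all three claims from Le Gall's description of simple geodesics together with the structure of the CRT, both recalled in Section~\ref{S_simple}.

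First, I would invoke the invariance of $(\m,d)$ under uniform re-rooting (Section~\ref{S_BM}) to observe that if (i)--(iii) hold almost surely at $x=\rho$, then by Fubini they hold almost surely for $\lam$-almost every $x\in\m$. I therefore work with the root throughout.

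The crux is the identification, recalled in Section~\ref{S_simple}, that every geodesic from a point $y\in\m$ to $\rho$ is a simple geodesic $\Gamma_t$ for some corner $t\in\PP^{-1}(y)$, and that $\mul(\rho)$ equals $\Pi$ applied to the set of cut-points of $\Te$. I would then use Lemma~3.2 of~\cite{LGP08} (cited in Section~\ref{S_BM}), which says that the $d_\ee=0$ and $d_Z=0$ identifications are disjoint off of $\{0,1\}$; this implies that the number of geodesics from $y$ to $\rho$ equals the branching degree of the corresponding cut-point of $\Te$. Since the CRT is almost surely binary, this degree is either $2$ or $3$, yielding (i).

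Claim (iii) then follows at once: branch points of $\Te$ form an almost surely countable, dense subset, and by continuity and surjectivity of $\Pi$ their image $\mul_3(\rho)$ is countable and dense. For (ii), the degree-$2$ cut-points of $\Te$ (the skeleton minus its branch points) are also dense in $\Te$, so $\mul_2(\rho)$ is dense in $\m$; the equality $\H\mul_2(\rho)=2$ follows from the known value $\H\mul(\rho)=2$ recalled in Section~\ref{S_net}, combined with the countability of $\mul_3(\rho)$. To see $\lam(\mul_2(\rho))=0$, I would use that $\lam$ is the push-forward of Lebesgue measure on $[0,1]$ via $\PP$, and that the set of $t\in[0,1]$ whose image under $\pp$ is a cut-point of $\Te$ has Lebesgue measure zero, since the volume measure on the CRT concentrates on leaves.

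The only delicate point, and hence the main obstacle, is matching the ``number of corners'' count of geodesics with the cut-point description of $\mul(\rho)$ while ruling out spurious contributions from leaves of $\Te$ identified in $\m$. This is exactly what the disjointness of $d_\ee$- and $d_Z$-identifications provides; once that is in hand, the binariness of the CRT and the classification of simple geodesics reduce (i)--(iii) to standard tree facts.
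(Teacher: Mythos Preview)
Your proposal is correct and follows essentially the same approach as the paper: the paper does not give an explicit proof of this proposition, but simply records it as a consequence of Le~Gall's classification of geodesics to the root, the binariness and skeleton structure of the CRT, and invariance under re-rooting --- precisely the ingredients you assemble. Your write-up supplies the details the paper leaves implicit (in particular the use of \cite[Lemma~3.2]{LGP08} to match the corner count with the tree degree, and the measure-zero argument via the volume measure concentrating on leaves).
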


We observe that the proof 
in \cite[Proposition 3.3]{LG10} 
that 
$S(\rho)$ is almost surely of Hausdorff dimension 2
gives additional information.

\begin{pro}\label{P_ScapU}
  Almost surely, for $\lambda$-almost every $x\in\m$, 
  for any non-empty, open set $U\subset\m$ and each 
  $k\in\{1,2,3\}$, we have that
  \[
    \dim(\mul_k(x)\cap U) = \pack(\mul_k(x)\cap U) = 2(3-k).
  \]
\end{pro}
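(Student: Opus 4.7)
\smallskip
\noindent\textbf{Proof plan.} By invariance of $(\m,\d)$ under re-rooting (Section~\ref{S_BM}), it suffices to establish the claim for the single point $x=\rho$. The three values of $k$ are then handled separately.

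The cases $k=1$ and $k=3$ reduce directly to Proposition~\ref{P_S123}. For $k=3$, the set $\mul_3(\rho)\cap U$ is countable, so its Hausdorff and packing dimensions both equal $0=2(3-3)$. For $k=1$, Proposition~\ref{P_S123} gives $\mul(\rho)=\mul_2(\rho)\cup\mul_3(\rho)$, which has Hausdorff dimension at most $2<4$ and therefore $\lam$-measure zero. Hence $\mul_1(\rho)\cap U=U\setminus\mul(\rho)$ has full $\lam$-measure in $U$, which is positive by Proposition~\ref{P_Dim-U}. The uniform ball upper bound \cite[Lemma~14]{M13} --- the main input in the proof of Proposition~\ref{P_Dim-U} --- combined with a standard Frostman-type argument (e.g.\ Falconer~\cite[Exercise~1.8]{F86}) implies that any Borel subset of $\m$ of positive $\lam$-measure has Hausdorff dimension at least $4-\eta$ for every $\eta>0$. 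Combined with $\pack(\mul_1(\rho)\cap U)\le\pack\m=4$, we obtain $\H(\mul_1(\rho)\cap U)=\pack(\mul_1(\rho)\cap U)=4=2(3-1)$.

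For $k=2$, the upper bound $\H(\mul_2(\rho)\cap U)\le 2$ is immediate from Proposition~\ref{P_S123}. The proof of $\H\mul(\rho)\le 2$ in \cite[Proposition~3.3]{LG10} produces a H\"older continuous surjection from a two-dimensional parameter space onto (a superset of) $\mul(\rho)$; the regularity of this parametrization also gives $\pack\mul(\rho)\le 2$, and in particular $\pack(\mul_2(\rho)\cap U)\le 2$. The matching lower bound $\H(\mul_2(\rho)\cap U)\ge 2$ is obtained by localizing Le Gall's lower-bound construction. Since $\PP:[0,1]\to\m$ is continuous, the open set $\PP^{-1}(U)$ contains some open interval $(a,b)$, and the set $\mul_2(\rho)$ is the $\PP$-image of the simple cut-times of $\Te$. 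Le Gall's construction of a Borel subset of $\mul_2(\rho)$ of Hausdorff dimension $2$ proceeds through families of simple cut-times; repeated with only time parameters in $(a,b)$, it yields a subset of $\mul_2(\rho)\cap\PP((a,b))\subset\mul_2(\rho)\cap U$ of Hausdorff dimension at least $2$, using that the relevant H\"older regularity of the Brownian snake is uniform on any sub-interval of $[0,1]$.

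The main obstacle is precisely this localization step: one must verify that Le Gall's construction of a two-dimensional Borel subset of $\mul_2(\rho)$ can be carried out using only time parameters inside an arbitrary sub-interval $(a,b)$, and that no step of the argument implicitly exploits the global behaviour of the Brownian snake on $[0,1]$. This amounts to a careful re-inspection of the proof of \cite[Proposition~3.3]{LG10}; the remaining pieces of the plan are routine.
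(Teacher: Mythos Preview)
Your outline is correct and closely parallels the paper's argument: reduce to $x=\rho$ by re-rooting, dispose of $k=3$ by countability, handle $k=1$ via the volume estimates underlying Proposition~\ref{P_Dim-U}, and treat $k=2$ by combining the packing upper bound for $\mul(\rho)$ with a localized lower bound. Two points deserve comment.

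First, your description of the packing upper bound is slightly off. You speak of a H\"older surjection from a \emph{two-dimensional} parameter space onto $\mul(\rho)$; but a H\"older map from a two-dimensional space would not give $\pack\le 2$. The correct input, used in the paper, is that $\mul(\rho)$ is the image under $\Pi:\Te\to\m$ of the skeleton $\mathrm{Sk}\subset\Te$, which has packing dimension $1$ (being a countable union of sets isometric to line segments), and that $\Pi$ is $(1/2-\eps)$-H\"older for every $\eps>0$. This yields $\pack\mul(\rho)\le 2$ directly.

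Second, the paper sidesteps precisely the ``main obstacle'' you isolate. Rather than re-running Le~Gall's construction on a sub-interval $(a,b)\subset[0,1]$, it uses a topological fact from~\cite{LG10}: the restriction $\Pi|_{\mathrm{Sk}}$ is a \emph{homeomorphism} from $\mathrm{Sk}$ onto $\mul(\rho)$. Since $\mul(\rho)$ is dense in $\m$, the preimage $(\Pi|_{\mathrm{Sk}})^{-1}(\mul(\rho)\cap U)$ is a non-empty open subset of the $\RR$-tree $\mathrm{Sk}$, and therefore contains a geodesic segment of $\Te$. The proof of \cite[Proposition~3.3]{LG10} already shows that the $\Pi$-image of \emph{any} such segment has Hausdorff dimension at least $2$, so no localization step is needed. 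Your approach via sub-intervals of $[0,1]$ would also work, but the homeomorphism route is cleaner and avoids re-inspecting the construction.
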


\begin{proof}
  By invariance under re-rooting, it suffices to prove 
  the claim holds almost surely when $x=\rho$ is the root of the map.
  
  Let a non-empty, open subset $U\subset\m$ be given.
  
  Let $S=S(x)$ and $S_i=S_i(x)$ for $i=1,2,3$.  
  By Proposition~\ref{P_S123}(i), 
  $S=S_2\cup S_3$ and $M-\{x\}=S_1\cup S$. 
  
  First, we note that by Proposition~\ref{P_S123}(iii), 
  $S_3\cap U$ is countable, and so has Hausdorff and packing dimension 0.
  
  From~\cite{LG10}  we have that $S$
  is the image of the cut-points (or skeleton) of the CRT, 
  ${\rm Sk}\subset \Te$, under the projection $\Pi:\Te\to\m$.
  Moreover, $\Pi$ is H{\"o}lder continuous
  with exponent $1/2-\eps$
  for any $\eps>0$, and restricted to ${\rm Sk}$,
  $\Pi$ is a homeomorphism from ${\rm Sk}$ onto $S$. 
  
  Note that ${\rm Sk}$ is of packing dimension 1, being
  the countable union of sets which are isometric to line
  segments (recall that the packing dimension of a 
  countable union of sets is the supremum of
  the dimension of the sets). Hence, by the H{\" o}lder 
  continuity of $\Pi$, it follows that $\pack \mul\le2$
  (see, for instance, \cite[Exercise 6, p.\ 108]{M95})
  and so in particular, we find that $\pack(S\cap U)\le2$.
  
  On the other hand, by the density of $S$ in $\m$
  and since $\Pi$ is a homeomorphism from ${\rm Sk}$
  to $S$, 
  we see that there is a geodesic segment in ${\rm Sk}$ that is projected
  to a path in $S\cap U$.
  In the proof of~\cite[Proposition 3.3]{LG10} 
  it is shown that the Hausdorff dimension 
  of any such path 
  is at least 2. Hence $\H(S\cap U)\ge2$.  
  
  Altogether, by the general dimension inequality 
  $\H A\le \pack A$, 
  we find that $S\cap U$ has
  Hausdorff and packing dimension 2. 
  
  Therefore, since $S_3\cap U$ has
  Hausdorff and packing dimension 0 and $S=S_2\cup S_3$, 
  it follows that
  $S_2\cap U$ has Hausdorff and packing dimension 2.
  Moreover, since by Proposition~\ref{P_Dim-U}, $U$
  has Hausdorff and packing dimension 4 
  and $M-\{x\}=S_1\cup S$,
  we find that 
  $S_1\cap U$ has Hausdorff and packing dimension 4.
\end{proof}

In closing, we note that 
Propositions~\ref{P_S123},\ref{P_ScapU} 
imply the following result. 

\begin{pro}\label{P_Dim-S}
  Almost surely, for $\lambda$-almost every $x\in\m$,
  $S(x)$ is dense, $\H S(x) = \pack S(x) = 2$, and 
  $\lam(S(x))=0$.
\end{pro}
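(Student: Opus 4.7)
The statement claims three facts about $S(x)$ for $\lambda$-almost every $x\in\m$: density, the equality $\H S(x)=\pack S(x)=2$, and $\lam(S(x))=0$. All three will follow by combining the decomposition $S(x)=S_2(x)\cup S_3(x)$ from Proposition~\ref{P_S123}(i) with the dimension information from Proposition~\ref{P_ScapU} applied to $U=\m$, plus the elementary properties of Hausdorff and packing dimension under unions. Work almost surely on the full-measure event where both referenced propositions hold, and fix such an $x$.

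The plan is as follows. First, density is immediate: since $S_2(x)\subset S(x)$ and $S_2(x)$ is dense in $\m$ by Proposition~\ref{P_S123}(ii), so is $S(x)$. Second, for the measure statement, Proposition~\ref{P_S123}(ii) gives $\lam(S_2(x))=0$, and $S_3(x)$ is countable by Proposition~\ref{P_S123}(iii); since the volume measure $\lam$ on $\m$ is (almost surely) non-atomic (it is the push-forward of Lebesgue measure on $[0,1]$ via $\PP$, and the fibers of $\PP$ are almost surely $\lam$-null), we conclude $\lam(S_3(x))=0$, hence $\lam(S(x))\le \lam(S_2(x))+\lam(S_3(x))=0$.

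For the dimension statement, I would argue both inequalities separately. For the lower bound, Proposition~\ref{P_S123}(ii) gives $\H S(x)\ge \H S_2(x)=2$. For the upper bound, apply Proposition~\ref{P_ScapU} with $U=\m$: this yields $\pack S_2(x)=2$ and $\pack S_3(x)=2(3-3)=0$. Since packing dimension is stable under finite (indeed countable) unions, taking the maximum gives $\pack S(x)=\pack(S_2(x)\cup S_3(x))=2$. Combining with the general inequality $\H S(x)\le \pack S(x)$ recalled in Section~\ref{S_dims}, we obtain $\H S(x)\le 2$, and together with the lower bound, $\H S(x)=\pack S(x)=2$.

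There is no real obstacle here; the statement is essentially a bookkeeping corollary of the two preceding propositions. The only mild subtlety is ensuring $\lam$ is non-atomic when handling the countable set $S_3(x)$, which is standard for the Brownian map volume measure, and verifying that the cited propositions hold simultaneously on a single full-measure event, which is automatic since we only take a finite intersection.
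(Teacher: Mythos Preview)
Your proof is correct and is precisely the unpacking of what the paper intends: the paper gives no explicit argument, merely stating that Propositions~\ref{P_S123} and~\ref{P_ScapU} imply the result, and your decomposition via $S(x)=S_2(x)\cup S_3(x)$ together with the finite-union stability of packing dimension is exactly how those two propositions combine. The only remark is that the non-atomicity of $\lambda$ used for $\lambda(S_3(x))=0$ is even more elementary than you indicate (each fibre $\PP^{-1}(y)$ is a finite set, hence Lebesgue-null), so there is no subtlety at all.
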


\section{Confluence near the root}\label{S_near}

We show that a confluence of geodesics is observed near 
the root of the Brownian map, strengthening the results 
discussed in Section~\ref{S_at}. Specifically, we establish 
the following result.

\begin{lem}\label{L_near}
  Almost surely, for $\lam $-almost every $x\in\m$, the following holds. 
  For any $y\in\m$ and neighbourhoods $N_x$ of $x$ and $N_y$ of $y$, 
  there are sub-neighbourhoods $N_x'$ and $N_y'$ so that if $x'\in N_x'$ 
  and $y'\in N_y'$, then any geodesic segment
  from $x'$ to $y'$ coincides with some
  geodesic segment
  from $x$ to $y$ outside of $N_x\cup N_y$.
\end{lem}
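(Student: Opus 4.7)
The plan is to argue by contradiction, combining the compactness of the space of geodesic segments (Lemma~\ref{L_AA}) with the stability of geodesics from typical points (Proposition~\ref{P_key}). Let $x\in\m$ be typical in the sense of Proposition~\ref{P_key} and let $y\in\m$. The case $x=y$ reduces to a standard compactness argument showing that geodesics between points close to $x$ have small diameter, so assume $x\neq y$. Suppose for contradiction that the conclusion fails for some neighbourhoods $N_x$ of $x$ and $N_y$ of $y$. Fix $\eps>0$ small enough that $B(x,2\eps)\subset N_x$, $B(y,2\eps)\subset N_y$, and $2\eps<d(x,y)$. The negation produces sequences $x_n\to x$ and $y_n\to y$ together with geodesic segments $\g_n$ from $x_n$ to $y_n$ such that $\g_n-(N_x\cup N_y)$ does not coincide with $\g'-(N_x\cup N_y)$ for any geodesic segment $\g'$ from $x$ to $y$.

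After passing to a subsequence, Lemma~\ref{L_AA} yields $\g_n\to\g$ in the Hausdorff topology for some geodesic segment $\g$, whose endpoints must be $x$ and $y$. Proposition~\ref{P_key}, applied to $\g$ as a sub-segment of itself, ensures that $\g$ is stable, and hence $\g_n\tto\g$. Since $d(x,y)>2\eps$, we may pick points $x',y'$ on $\g$ with $d(x,x')=\eps$ and $d(y',y)=\eps$; this gives a non-trivial sub-segment $[x',y']\subset (x,y)$ of the relative interior of $\g$. By the definition of strong convergence, $[x',y']\subset\g_n$ for all sufficiently large $n$.

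Writing $\g_n=[x_n,x']\cup[x',y']\cup[y',y_n]$, for large $n$ we have $d(x_n,x')\le d(x_n,x)+\eps<2\eps$, so $[x_n,x']\subset B(x,2\eps)\subset N_x$; similarly $[y',y_n]\subset N_y$. Hence $\g_n-(N_x\cup N_y)=[x',y']-(N_x\cup N_y)$. The analogous decomposition $\g=[x,x']\cup[x',y']\cup[y',y]$, together with $[x,x']\subset B(x,\eps)\subset N_x$ and $[y',y]\subset N_y$, gives $\g-(N_x\cup N_y)=[x',y']-(N_x\cup N_y)$. Thus $\g_n-(N_x\cup N_y)=\g-(N_x\cup N_y)$ for all sufficiently large $n$, contradicting the choice of $\g_n$. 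The main point where one must be careful is the invocation of Proposition~\ref{P_key} in the form that $\g$ itself (not merely its proper sub-segments) is stable; this is what converts Hausdorff convergence $\g_n\to\g$ into the strong convergence needed to force the $\g_n$ to coincide with $\g$ outside $N_x\cup N_y$.
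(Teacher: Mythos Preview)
Your proof is correct and follows essentially the same route as the paper's: assume failure, extract a subsequence of geodesics $\g_n$ converging (via Lemma~\ref{L_AA}) to some geodesic $\g$ from $x$ to $y$, then use Proposition~\ref{P_key} to upgrade this to strong convergence and derive a contradiction. Your write-up is in fact slightly more explicit than the paper's, which only records the containment $\g_n-(N_x\cup N_y)\subset\g$ rather than the full equality; your decomposition $\g_n=[x_n,x']\cup[x',y']\cup[y',y_n]$ and the parallel one for $\g$ make the ``coincides'' conclusion transparent.
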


We note that Lemma~\ref{L_near} strengthens Lemma~\ref{L_at} in that 
it allows for perturbations of both endpoints of a geodesic.

Once Lemma~\ref{L_near} is established, our key result follows easily 
by Lemma~\ref{L_LG} and the fact that the Brownian map is almost surely 
compact. 

\begin{proof}[Proof of Proposition~\ref{P_conf-point}]
  By invariance under re-rooting, it suffices to prove the claim when $x=\rho$
  is the root of the map.  Let an (open) neighbourhood $N$ of $x$ be given.
  By Lemma~\ref{L_LG}, there is a point $x_0\in N-\{x\}$ which is
  contained in all geodesic segments between $x$ and points $y\in
  N^c$. Hence, by Lemma~\ref{L_near}, for each $y\in N^c$ there is an
  $\eta_y>0$ so that $x_0$ is contained in all geodesic segments between
  points $x'\in B(x,\eta_y)$ and $y'\in B(y,\eta_y)$.  Since $N^c$ is
  compact, it can be covered by finitely many balls $B(y,\eta_y)$, say with
  $y\in Y$.  Put $N'=B(x,\min_{y\in Y}\eta_y)$. If $y_0\in N^c$, then
  $y_0\in B(y,\eta_y)$ for some $y\in Y$, and thus all geodesics from
  points $x'\in N'\subset B(x,\eta_y)$ to $y_0$ pass through $x_0$.
\end{proof}

The rest of this section contains the proof of Lemma~\ref{L_near}.  By
invariance under re-rooting, we may and will assume that $x$ is in
fact the root of the Brownian map.  In rough terms, we must rule out
the existence of a sequence of geodesic segments $[x_n,y_n]$
converging to a geodesic segment $[x,y]$, but not converging
strongly in the sense given in Section \ref{S_conf-point}.

For the remainder of this section we fix a realization of the Brownian map 
exhibiting the almost sure properties of the random metric space
$(\m,\d)$ that will be required below, notably the fact that $\m$ is
homeomorphic to the $2$-dimensional sphere.  
Slightly abusing notation, let us refer to this realization as $(\m,\d)$. 
We also fix a point $y\neq x \in\m$ and a geodesic segment
$\g=[x,y]$ between $x$ and $y$.

We utilize a dense subset $T\subset\m$ of points, which we
refer to as \emph{typical points,} containing the root $x$, and such that
\begin{enumerate}[nolistsep,label={(\roman*)}]
\item the claims of Proposition~\ref{P_reg} 
and Lemma~\ref{L_at} hold for all $u\in T$;
\item for each $u,v\in T$, there is a unique geodesic
  	from $u$ to $v$.
\end{enumerate}

Such a set exists almost surely.  For example, the set of equivalence
classes containing rational points almost surely works.  
We may assume that
$T$ exists for the particular realization of $(\m,\d)$ we have selected.
It is in fact possible to choose $T$ to have full $\lambda$-measure,
but for now, we only need it to be dense in $M$.

In what follows, we will at times shift our attention to the homeomorphic 
image of a neighbourhood of $\g$ in which our arguments are more 
transparent. Whenever doing so, we will appeal only to topological 
properties of the map. We let $d_{\rm E}$ be the 
Euclidean distance on $\CC$, and for $w\in 
\CC$ and $r>0$, we let $B_{\rm E}(w,r)$ be the open Euclidean ball 
centered at $w$ with radius $r$.

Fix a homeomorphism $\tau$ from $\m$ to $\hat\CC$.
The image of $\g$ under $\tau$ is a simple arc in $\hat\CC$. Let
$\phi$ be a homeomorphism from this arc onto the unit interval
$I=[0,1]\subset\RR\subset\CC$, with $\phi(\tau(x))=0$ and thus
$\phi(\tau(y))=1$.  By a variation of the Jordan-Sch\"{o}nflies
Theorem (see Mohar and Thomassen~\cite[Theorem 2.2.6]{MT01}), $\phi$
can be extended to a homeomorphism from
$\hat\CC$ onto $\hat\CC$.  Hence $\phi\circ\tau|_{\gamma}$ can be
extended to a homeomorphism from $\m$ to $\hat\CC$ sending $\g$ onto
$I$.  We fix such a homeomorphism, and denote it by $\psi$.

Since $M$ is homeomorphic to $\hat\CC$, 
once the geodesic $\gamma$ is fixed
we can think of the Brownian map as just 
$\hat\CC$ with a random metric (for
which $[0,1]$ is a geodesic).  The
reader may well do this, and then $\psi$ becomes the identity.  
We do not
take this route, since that would require showing that $\psi$ can be
constructed in a measurable way, which we prefer to avoid.

\begin{defn}
  Let $\HH_+=\{w\in\CC:{\rm Im}\, w>0\}$ 
  (resp.\ $\HH_-=\{w\in\CC:\mathrm{Im}\,w<0\}$) denote the open 
  upper (resp.\ lower) half-plane of $\CC$. We refer to 
  $L=\psi^{-1}(\HH_+)$ 
  (resp.\ $R = \psi^{-1}(\HH_-)$) 
  as the \emph{left} (resp.\ \emph{right}) \emph{side of $\g$}. 
\end{defn}

\begin{lem}\label{L_u}
  Let $u,v\in\g$. For all $\delta>0$, there are typical points 
  $u_\ell\in B(u,\delta)\cap L\cap T$ and 
  $v_\ell\in B(v,\delta)\cap L\cap T$ so that 
  $[u_\ell,v_\ell]-\g$ is contained in $(B(u,\delta)\cup B(v,\delta))\cap L$.
  (See Figure~\ref{F_u}.)
  An analogous statement holds replacing $L$ with $R$. 
\end{lem}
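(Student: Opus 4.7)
The existence of typical points $u_\ell \in T \cap L \cap B(u,\delta)$ and $v_\ell \in T \cap L \cap B(v,\delta)$ is immediate from the density of $T$ in $\m$ together with the fact that, under the homeomorphism $\psi$, both $u$ and $v$ lie in the closure of $L = \psi^{-1}(\HH_+)$. The substantive content of the lemma is the containment $[u_\ell,v_\ell] - \g \subset (B(u,\delta) \cup B(v,\delta)) \cap L$, which the plan establishes in two stages.

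First, I would show that for appropriately chosen sequences $u_n \in T \cap L$ and $v_n \in T \cap L$ converging to $u$ and $v$, the unique geodesics $[u_n,v_n]$ converge to $\g|_{[u,v]}$ in the Hausdorff topology. Subsequential Hausdorff limits exist by Lemma \ref{L_AA}, and every such limit is a geodesic from $u$ to $v$. Once Hausdorff convergence to $\g|_{[u,v]}$ is in place, Proposition \ref{P_key} applied to the root $x$ --- which is typical by invariance under re-rooting --- makes $\g|_{[u,v]}$ a stable geodesic, and therefore upgrades the convergence to strong convergence $[u_n,v_n] \tto \g|_{[u,v]}$. By definition, strong convergence means that outside any fixed neighbourhood of the endpoints, $[u_n,v_n]$ eventually coincides with $\g|_{[u,v]}$, which delivers $[u_n,v_n] - \g \subset B(u,\delta) \cup B(v,\delta)$ for all large $n$.

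Second, I would verify the side condition $[u_n,v_n] - \g \subset L$. By strong convergence, the deviations from $\g$ reduce to two short end arcs confined to tiny neighbourhoods of $u$ and $v$. Taking $\delta$ small enough, such a neighbourhood of $u$ is homeomorphic via $\psi$ to a Euclidean disk centred at $\psi(u) \in I$ in which $\g$ locally separates the disk into a $\HH_+$-side lying in $L$ and a $\HH_-$-side lying in $R$. Each end arc begins at $u_n \in L$ (or at $v_n \in L$) and terminates on $\g$; for it to lie in $L$ save its terminal point, it must not cross $\g$ at any intermediate point. By combining the strong convergence with a careful choice of $u_n,v_n$ among the dense collection of candidates --- and further shrinking the enclosing neighbourhoods --- I would ensure this crossing-free behaviour.

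The hard part will be identifying the Hausdorff limit with $\g|_{[u,v]}$. Multiple geodesics from $u$ to $v$ can exist in the Brownian map, since $u$ and $v$ need not be typical, and a priori $[u_n,v_n]$ could accumulate on any of them. My approach is to apply Lemma \ref{L_at} at the typical points $u_n$ and $v_n$: outside small neighbourhoods of $u$ and $v$, $[u_n,v_n]$ is controlled by geodesic trunks emanating from $u_n$ toward $v$ and from $v_n$ toward $u$. Using the description of simple geodesics after re-rooting at $u_n$ (Section \ref{S_simple}) together with the coalescence furnished by Lemma \ref{L_M}, these trunks can be matched with sub-arcs of $\g$, thereby forcing the Hausdorff limit to be $\g|_{[u,v]}$.
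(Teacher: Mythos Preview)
Your proposal has a fatal circularity: you invoke Proposition~\ref{P_key} to upgrade Hausdorff convergence of $[u_n,v_n]$ to strong convergence, but in the paper Proposition~\ref{P_key} is deduced from Lemma~\ref{L_parallel}, whose proof in turn relies on Lemma~\ref{L_u}. Thus Lemma~\ref{L_u} must be established without any appeal to stability of sub-segments of $\g$; it is precisely the seed from which that stability is grown. Your fallback in the last paragraph --- applying Lemma~\ref{L_at} at the moving typical points $u_n,v_n$ --- does not rescue the argument either, since the sub-neighbourhood $N'$ furnished by Lemma~\ref{L_at} depends on the base point (cf.\ the remark following its proof), so you get no uniform control along the sequence, and in any case this would only identify the limit, not replace the use of stability.

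The paper's route is more direct and uses neither convergence nor stability. It is a two-step bootstrap anchored at the fixed typical point $x$ (the root). First apply Lemma~\ref{L_at} at $x$ with target $v$: for $v_\ell$ close enough to $v$, the geodesic $[x,v_\ell]$ agrees with some geodesic $[x,v]$ outside a small neighbourhood $N$ of $v$, and since $(x,y)$ is regular, Lemma~\ref{sec:geodesic-networks} forces $[x,v_\ell]$ to coalesce with $\g$ from its first meeting point $w$ onward; hence $[x,v_\ell]-\g\subset N\subset B(v,\delta)$. Taking $N$ to be the $\psi$-preimage of a small Euclidean disk around $\psi(v)$, the arc $\psi([v_\ell,w))$ starts in $\HH_+$, stays in the disk, and avoids $I$, hence remains in $\HH_+$, giving $[x,v_\ell]-\g\subset L$. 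Second, since $v_\ell\in T$ is itself typical and $u$ lies in the relative interior of the unique geodesic $[x,v_\ell]$, repeat the argument with $v_\ell$ in the role of $x$ and $u$ in the role of $v$ to produce $u_\ell$.
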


\begin{figure}[h]
  \centering
  \includegraphics[scale=0.8]{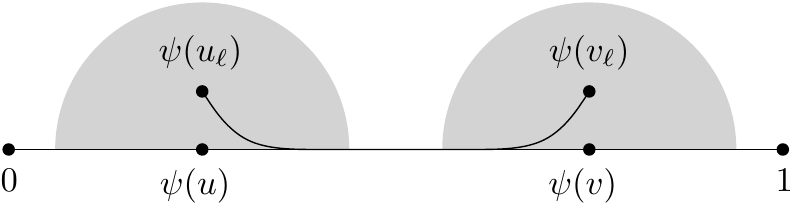}
  \caption{Lemma \ref{L_u}: $[u_\ell,v_\ell]-\g$ is contained in
    $(B(u,\delta)\cup B(v,\delta)) \cap L$ (as viewed
    through the homeomorphism $\psi$).}
  \label{F_u}
\end{figure}

\begin{proof}
  Let $\delta>0$ and $u,v\in\g$ be given.  We only discuss the argument for
  the left side of $\g$, since the two cases are symmetrical.  Moreover, we
  may assume that $u,v,x,y$ are all distinct.  Indeed, suppose the lemma
  holds with distinct $u,v,x,y$.  If we shift $u,v$ along $\gamma$ by at
  most $\eta>0$ and apply the lemma with $\delta'=\delta-\eta$, the
  resulting $u_\ell,v_\ell$ will satisfy the requirements of the lemma
  for $u,v$ and $\delta$.  
  Without loss of generality, we 
  further assume $x,u,v,y$
  appear on $\gamma$ in that order.

  We may and will assume that $\delta < d(u,x) \wedge d(v,y)$. In
  particular, $B(u,\delta)$ and $B(v,\delta)$ do not contain the
  extremities $x,y$ of $\g$.  Let $\delta'>0$ be small enough so that
  $B_{\rm E}(\psi(v),\delta')\subset \psi(B(v,\delta))$.
  Note that the
  Euclidean ball $B_{\rm E}(\psi(v),\delta')$ does not contain 
  $0,1\in \CC$, and so $N=\psi^{-1}(B_{\rm E}(\psi(v),\delta'))$ does not
  intersect the extremities $x,y$ of $\g$.

  Let us apply Lemma \ref{L_at} to the points $x,v$ (using the fact that
  $x$ is typical) and the neighbourhood 
  $N=\psi^{-1}(B_{\rm E}(\psi(v),\delta'))$ of $v$ defined above.  
  According to this lemma,
  there exists a neighbourhood $N'\subset N$ of $v$ such that any geodesic
  segment $\g'$ between a point $v'\in N'$ and $x$ coincides with some
  geodesic between $v$ and $x$ outside $N$.  Since $x,y\notin N$, $\gamma'$
  must first encounter $\g$ (if we see $\g'$ as parameterized from $v'$ to
  $x$) at a point $w$ in the relative interior of $\g$.  Since $(x,y)$ is
  regular, we apply Lemma~\ref{sec:geodesic-networks} to conclude that $\g$
  and $\g'$ coincide between $w$ and $x$ and are disjoint elsewhere.

  If we further assume that $v'\in N' \cap L$ is in the left side
  of $\g$, then we claim that the sub-arc $[v',w)\subset\g'$ is
  contained in $L$.  Indeed, $\psi([v',w))$ is contained in the Euclidean
  ball $B_{\rm E}(\psi(v),\delta')$, starts in $\HH_+$, and is disjoint of $I$, 
  and so, it is contained in the upper half of the ball.

  Since $T$ is dense in $M$, we can take some typical $v_\ell \in N'\cap
  L\cap T$.  For this choice, the geodesic segment $[x,v_\ell]$ is
  unique, and $[x,v_\ell]-\gamma$ is included in $B(v,\delta) \cap L$.

  Assume also $\delta < \frac12 d(u,v)$.  By a similar argument, in which
  $v_\ell$ assumes the role of $x$ (which is a valid assumption since
  $v_\ell\in T$), for any $u'$ close enough to $u$, any 
  geodesic
  $[u',v_\ell]$ coalesces with $[x,v_\ell]$ within $B(u,\delta)$.  
  Taking
  such a $u'=u_\ell$ in $T\cap L$, we get that
  $[v_\ell,u_\ell]- [v_\ell,x]\subset B(u,\delta)\cap L$, and hence
  $[u_\ell,v_\ell]- \g\subset (B(u,\delta)\cup B(v,\delta))\cap L$, as required.
\end{proof}

In the next lemma, recall the two notions of
convergence (standard and strong) of geodesic segments given in Section
\ref{S_conf-point}. 

\begin{lem}\label{L_parallel}
  Suppose that $[x',y']\subset\g$ and $[x_n,y_n]\to[x',y']$ 
  as $n\to\infty$. Then
  we have the strong convergence
  $[x_n,y_n] \tto [x',y']$. 
\end{lem}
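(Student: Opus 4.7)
My plan is to prove Lemma~\ref{L_parallel} by contradiction, using Lemma~\ref{L_u} to build a topological obstruction around a hypothetical ``bad'' point. Suppose strong convergence fails: there exist $[x'', y''] \subset (x', y')$ and a subsequence (still denoted by $n$) along which $[x'', y''] \not\subset [x_n, y_n]$. Pick $z_n \in [x'', y''] \setminus [x_n, y_n]$; by compactness of $[x'', y'']$ I may extract a sub-subsequence with $z_n \to z \in [x'', y'']$. Since $[x'', y''] \subset (x', y')$, the point $z$ is distinct from $x'$ and $y'$.

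I would then apply Lemma~\ref{L_u} to construct a Jordan disk around $z$. Pick $u, v \in \g$ straddling $z$, with $u$ between $x'$ and $z$ on $\g$ and $v$ between $z$ and $y'$, and a $\delta > 0$ smaller than $\min(d(u,z), d(v,z))$. Applying Lemma~\ref{L_u} on each side of $\g$ yields typical points $u_\ell, v_\ell \in T \cap L$ and $u_r, v_r \in T \cap R$ such that the unique geodesics $[u_\ell, v_\ell]$ and $[u_r, v_r]$ each decompose as a detour within $B(u,\delta) \cap L$ (resp.\ $\cap R$), a common sub-segment of $\g$ running from a point of $B(u,\delta) \cap \g$ to a point of $B(v,\delta) \cap \g$ that passes through $z$, and a detour within $B(v,\delta) \cap L$ (resp.\ $\cap R$). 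Through the homeomorphism $\psi$ of Section~\ref{S_near}, the detour pieces lie in opposite half-planes, so concatenating them with short arcs near $u$ and $v$ produces a Jordan curve $\Gamma \subset \m$ enclosing a Jordan domain $D$ with $z \in \Int D$ and with $x', y' \notin \overline D$.

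For $n$ large, the endpoints $x_n, y_n$ lie outside $\overline D$, but Hausdorff convergence $[x_n, y_n] \to [x', y']$ forces $[x_n, y_n]$ to contain points arbitrarily close to $z \in \Int D$, hence to enter $D$ and cross $\Gamma$. The crucial step is to argue that, for $n$ large, every crossing of $\Gamma$ by $[x_n, y_n]$ occurs on one of the short $\g$-sub-arcs of $\Gamma$ near $u$ and $v$, and not on any detour portion of $[u_\ell, v_\ell]$ or $[u_r, v_r]$. Suppose instead that $[x_n, y_n]$ crossed the left detour $[u_\ell, a_\ell]$ at an interior point $w$. By regularity of $(u_\ell, v_\ell)$ (Lemma~\ref{sec:geodesic-networks}), the sub-geodesic $[u_\ell, w]$ is the unique geodesic from $u_\ell$ to $w$ and lies entirely in $B(u,\delta) \cap L$. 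Combined with the confluence of geodesics at the typical point $u_\ell$ (Lemma~\ref{L_LG} applied via re-rooting), this would force the sub-segment of $[x_n, y_n]$ emanating from $w$ toward the far endpoint $y_n$ to pass through a fixed bottleneck near $u_\ell$ before diverging, which is incompatible with Hausdorff proximity of $[x_n, y_n]$ to $[x', y']$ (since $[x', y']$ stays bounded away from $u_\ell$). Symmetric arguments rule out crossings on each of the other three detour pieces.

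Once crossings are restricted to the $\g$-sub-arcs of $\Gamma$, any sub-segment of $[x_n, y_n]$ trapped between an entry on the $u$-side and an exit on the $v$-side is a geodesic between two points of $\g$ in the relative interior of $(x, y)$. By Lemma~\ref{sec:geodesic-networks} applied to the regular pair $(x, y)$, the unique such geodesic is the corresponding sub-segment of $\g$, which passes through $z$. A Hausdorff-proximity argument also shows that $[x_n, y_n]$ must have at least one entry/exit pair of this form, since otherwise all excursions of $[x_n, y_n]$ into $D$ would stay near $u$ or near $v$ and fail to approach $z$. Consequently $[x_n, y_n]$ contains the $\g$-sub-segment through $z$, and since $z_n \to z \in \Int D$ we have $z_n$ on this sub-segment for $n$ large, contradicting $z_n \notin [x_n, y_n]$. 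The hardest step will be making the crossing classification rigorous: cleanly ruling out detour crossings requires carefully combining the local structural information from Lemma~\ref{sec:geodesic-networks} and Lemma~\ref{L_LG} with the global Hausdorff proximity of $[x_n, y_n]$ to $[x', y']$.
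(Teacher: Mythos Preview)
Your overall strategy---use Lemma~\ref{L_u} on both sides of $\gamma$ to trap $\gamma_n$, then invoke the uniqueness of $[u_\ell,v_\ell]$ and $[u_r,v_r]$---is essentially the same as the paper's. The paper works directly (showing $\gamma'_\eps\subset\gamma_n$ for large $n$) rather than by contradiction around a limit point $z$, and it builds a single ``tube'' $V_\eta$ around all of $\gamma'$ rather than a local Jordan domain around $z$, but these are cosmetic differences.

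The genuine gap is in your detour-crossing argument. You claim that if $\gamma_n$ meets the detour $[u_\ell,a_\ell]$ at an interior point $w$, then confluence of geodesics at $u_\ell$ (Lemma~\ref{L_LG}) forces the sub-segment $[w,y_n]\subset\gamma_n$ through a bottleneck near $u_\ell$. But Lemma~\ref{L_LG} concerns geodesics \emph{emanating from} $u_\ell$; the segment $[w,y_n]$ is a geodesic from $w$, not from $u_\ell$, and there is no reason the concatenation $[u_\ell,w]\cup[w,y_n]$ should be a geodesic. So the confluence lemma gives you nothing here.

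The correct mechanism is the one you already have in hand but do not use: uniqueness of sub-geodesics of $[u_\ell,v_\ell]$ (Lemma~\ref{sec:geodesic-networks}). The paper first observes that Hausdorff convergence forces $\psi(\gamma_n)$ into a thin strip $V_\eta$ around $I$ for large $n$, with $\eta$ chosen small enough that $\psi(u_\ell),\psi(v_\ell),\psi(u_r),\psi(v_r)$ lie \emph{outside} $V_\eta$. Then the detour portions of $\psi([u_\ell,v_\ell])$ cut off a region $H_\ell\subset V_\eta\cap\HH_+$ whose boundary within $V_\eta$ is entirely contained in $\psi([u_\ell,v_\ell])$. If $\psi(\gamma_n)$ entered $H_\ell$, the Jordan Curve Theorem would force it to meet $\psi([u_\ell,v_\ell])$ at two distinct points $\psi(u_0),\psi(v_0)$ with the intervening arc of $\gamma_n$ disjoint from $[u_\ell,v_\ell]$---giving a second geodesic between $u_0$ and $v_0$ and contradicting uniqueness. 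This is the argument your proposal needs in place of the appeal to Lemma~\ref{L_LG}; once you substitute it, the rest of your outline goes through.
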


The proof is somewhat involved.  The idea of the proof is to use
Lemma~\ref{L_u} to obtain geodesic segments $\gamma_\ell=[u_\ell,v_\ell]$
and $\gamma_r = [u_r,v_r]$ between typical points in the left and right
sides of $\g$, whose intersection $\gamma_\ell\cap\gamma_r$ contains a
large segment from $\gamma$.  Since $\g_\ell$ and $\g_r$ are the unique
geodesics between their (typical) endpoints, we deduce that $\g_n$ contains
$\g_\ell\cap\g_r$ for all large $n$.  See Figure~\ref{F_parallel}.
   
\begin{proof} 
  Let $\g_n=[x_n,y_n]$ and $\g'=[x',y']$, such that 
  $\g_n\to\g'$, as in the lemma be given. 
  
  Let $\eps>0$ and put $\g'_\eps= \g'-(B(x',\eps)\cup B(y',\eps))$. 
  We show that $\g_n$ contains 
  $\g'_\eps$ for all large $n$. Since $\g_n\to\g'$
  (and hence $x_n\to x'$ and $y_n\to y'$)
  this implies that $\g_n\tto\g'$, as required. 
  
  We may assume that $\eps<2^{-1}d(x',y')$.
  Let $u$ (resp.\ $v$)
  denote the point in $\g'$ at distance $\eps/2$ from $x'$ (resp.\
  $y'$).  By Lemma~\ref{L_u}, there are points $u_\ell\in
  B(u,\eps/4)\cap L\cap T$ and $v_\ell\in B(v,\eps/4)\cap L\cap
  T$ such that $[u_\ell,v_\ell]-\g$ is contained in
  $(B(u,\eps/4)\cup B(v,\eps/4))\cap L$.  We also let $u_r,v_r$ be
  defined similarly, replacing $L$ by $R$ everywhere. 
  Note that the
  geodesic segments $[u_\ell,v_\ell]$ and $[u_r,v_r]$ are unique since
  the extremities are all in $T$. Moreover, by our choice of
  $\eps,u,v$, the segments $[u_\ell,v_\ell]$ and $[u_r,v_r]$ intersect
  $\g$ and are disjoint from $\{x',y'\}$.  
  Put
  \[
  \delta=\frac{1}{2}\min\{d(u_\ell,\g),d(v_\ell,\g) ,d(u_r,\g) ,d(v_r,\g)\}
  \]
  and note that $\delta>0$. Let
  $[\g]_\delta=\{z\in M:d(z,\g)< \delta\}$ be the
  $\delta$-neighbourhood of $\g$ in $M$. 

  For $\eta>0$, let us write $V_\eta=\{w\in \CC:d_{\rm E}(w,I)<
  \eta\}$ for the $\eta$-neighbourhood of $I$ in $\CC$. Let $\eta_1>0$ be
  such that
  $V_{\eta_1}\subset \psi([\g]_\delta)$. Such an $\eta_1$ exists
  since, otherwise, we could find a sequence $(z_n)$ of points in $M$
  such that $d(z_n,\g)\geq \delta$ but $d_{\rm
    E}(\psi(z_n),I)\to 0$ as $n\to\infty$, a clear
  contradiction since $\psi(\g)=I$ and $(z_n)$ has convergent
  subsequences.

  Note that
  $\psi(u_\ell),\psi(v_\ell),\psi(u_r),\psi(v_r)\notin V_{\eta_1}$ by the
  definition of $\delta$. Put $I_\ell=\psi([u_\ell,v_\ell])$, and fix
  $\eta_2>0$ such that
  \[
    \eta_2<d_{\rm E}(\psi(x'),I_\ell)\wedge d_{\rm E}(\psi(y'),I_\ell)\,
    ,
  \]
  which is possible since $[u_\ell,v_\ell]$ does not intersect
  $\{x',y'\}$. Finally, we let $\eta_\ell=\eta_1\wedge \eta_2$, and
  similarly define $\eta_r$, and set $\eta = \eta_\ell \wedge \eta_r$.

  Consider $I_\ell$ as a parametrized simple path from $\psi(u_\ell)$ to
  $\psi(v_\ell)$.  This path contains a single segment of $I$, since
  the geodesic 
  $[u_\ell,v_\ell]$ is unique.  Let $u''_\ell,v''_\ell$ be
  defined by $I_\ell \cap I = [\psi(u''_\ell),\psi(v''_\ell)]$,
  with $u_\ell''$ the endpoint closer to $x$.  
  Let the
  last point at which $I_\ell$ enters (the closure of) $V_\eta$ before
  hitting $I$ be $\psi(u'_\ell)$.  Let the first point it exits $V_\eta$
  after separating from $I$ be $\psi(v'_\ell)$.  See
  Figure~\ref{F_parallel}.  Let $H_\ell$ denote the connected component of
  $V_\eta-\psi([u'_\ell,v'_\ell])$ that is contained in $\HH_+$.  Replacing
  $u_\ell,v_\ell$ with $u_r,v_r$ in the arguments above, we obtain $u_r''$,
  $v_r''$, $H_r$.
  Note that our choice of $\eta$ implies that $\psi(x')$ and $\psi(y')$ are
  farther than $\eta$ away (with respect to $d_{\rm E}$) from $H_\ell,H_r$.

  \begin{figure}
    \centering
    \includegraphics[scale=0.8]{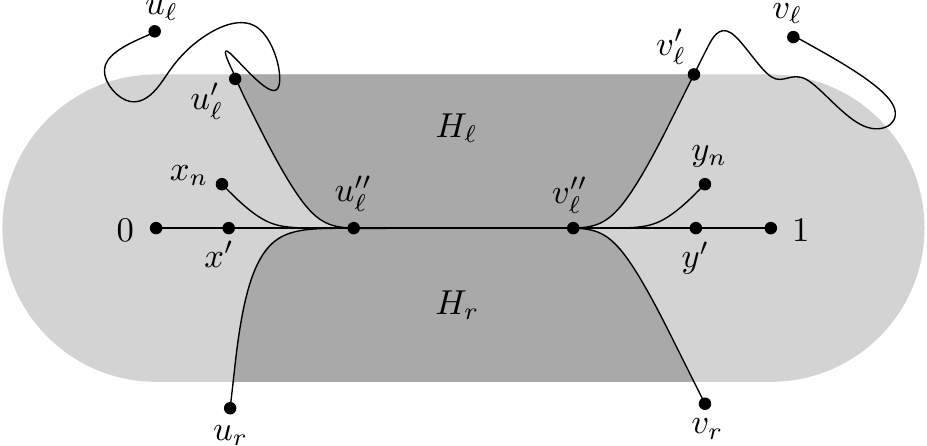}
    \caption{Given $[x',y']\subset \gamma$ we find a geodesic
      $\gamma_\ell =[u_\ell,v_\ell]$ which intersects $\gamma$ in
      $[u''_\ell,v''_\ell]$, which is almost all of $[x',y']$, and
      similarly $[u_r,v_r]$.  These are used to define the sets $V_\eta$
      (shaded), and subsets $H_\ell$ and $H_r$ (dark gray).  For large $n$,
      the geodesics $\gamma_n$ are included in $V_\eta$ and cannot enter
      $H_\ell\cup H_r$, leading to strong convergence.
      The points $u,v,u'_r,u''_r,v'_r,v''_r,u'',v''$ are not shown.  For clarity,
      we omitted $\psi(\cdot)$ from all points (besides
      $\psi(x)=0$ and $\psi(y)=1$) named in the figure.
      }
    \label{F_parallel}
  \end{figure}

  Since $\g_n\to\g'$, we have that for every $n$ large enough,
  $\psi(\g_n)\subset V_\eta$, $\psi(x_n)\in B_{\rm E}(\psi(x'),\eta)$, and
  $\psi(y_n)\in B_{\rm E}(\psi(y'),\eta)$.  By our choice of $\eta$, for
  such an $n$, the extremities $\psi(x_n),\psi(y_n)$ of $\psi(\g_n)$ do not
  belong to $H_\ell\cup H_r$.

  We claim that, for all such $n$, 
  $\psi(\g_n) \cap H_\ell = \emptyset$.  Indeed, 
  if $\psi(\g_n)$ were to intersect
  $H_\ell$, then 
  by the Jordan Curve Theorem 
  it would intersect $\psi([u_\ell',v_\ell'])$ at two points
  $\psi(u_0),\psi(v_0)$ such that the segment 
  $\psi((u_0,v_0))\subset\psi(\g_n)$ is contained in $H_\ell$.
  Since $H_\ell\cap \psi([u_\ell',v_\ell'])=\emptyset$, 
  it would then follow that there are distinct geodesics between 
  $u_0,v_0\in [u_\ell,u_r]$, contradicting the uniqueness $[u_\ell,u_r]$. 
  Similarly, for all such $n$, 
  $\psi(\g_n) \cap H_r = \emptyset$. 
    
  Let $[u'',v'']=[u_\ell'',v_\ell'']\cap[u_r'',v_r'']$, with $u''$ the
  endpoint closer to $x$.  
  Recalling (from the third paragraph of
  the proof) that $d(x',u)=\eps/2$, $d(y',v)=\eps/2$,
  $u_\ell\in B(u,\eps/4)$, $v_\ell\in B(v,\eps/4)$, and
  $[u_\ell,v_\ell]-\g=[u_\ell,u_\ell'') \cup(v_\ell'',v_\ell]$ is contained in
  $B(u,\eps/4)\cup B(v,\eps/4)$, it follows
  that $d(u_\ell'',x'),d(v_\ell'',y')<\eps$. Similarly, 
  since $u_r\in B(u,\eps/4)$, $v_r\in B(v,\eps/4)$, and
  $[u_r,v_r]-\g=[u_r,u_r'') \cup(v_r'',v_r]$ is contained in
  $B(u,\eps/4)\cup B(v,\eps/4)$, 
  we have that 
  $d(u_r'',x'),d(v_r'',y')<\eps$.
  Hence   
  $d(u'',x'),d(v'',y')<\eps$, and so
  $\g'_\eps\subset [u'',v'']$.
  
  To conclude recall that, for all large $n$, we have that 
  $\psi(\g_n)\subset V_\eta$, $\psi(x_n)\in B_{\rm E}(\psi(x'),\eta)$,
  $\psi(y_n)\in B_{\rm E}(\psi(y'),\eta)$, and 
  $\psi(\g_n)\cap (H_\ell\cup H_r)=\emptyset$. 
  By the Jordan Curve Theorem, it moreover follows that
  $[u'',v''] \subset \g_n$, and hence $\g'_\eps \subset \g_n$,
  completing the proof. 
\end{proof}

\begin{proof}[Proof of Proposition~\ref{P_key}]
Since $\g=[x,y]$ is a general geodesic segment from the root of 
the map, we obtain Proposition~\ref{P_key} immediately by 
Lemma~\ref{L_parallel} and invariance under re-rooting.
\end{proof}

With Proposition~\ref{P_key} at hand, Lemma~\ref{L_near} 
follows easily.

\begin{proof}[Proof of Lemma~\ref{L_near}]
  By invariance under re-rooting,  we may restrict to the case that $x$ is the root
  of $\m$.  Let $y\in\m$ and neighbourhoods $N_x$ of $x$ and $N_y$ of $y$
  be given.  Almost surely, there are at most $3$ geodesics from $x$ to
  $y$, which we call $\gamma_i$, for $i=1,\dots, k$ with $k\leq 3$.
  Suppose that $[x_n,y_n]$ is a sequence of geodesic segments with
  $x_n\to x$ and $y_n\to y$ in $(\m,\d)$.  If $[x_{n_k},y_{n_k}]$ is a
  convergent subsequence of $[x_n,y_n]$, then by Lemma~\ref{L_AA},
  $[x_{n_k},y_{n_k}]$ converges to some $\gamma_i$.  By
  Proposition~\ref{P_key}, it follows that 
  $[x_{n_k},y_{n_k}]-(N_x\cup N_y)$ is contained in $\gamma_i$
  for all large $k$. 
  We
  conclude that for any sequence $[x_n,y_n]$ as above,
  for all sufficiently large $n$ we have that $[x_n,y_n]-(N_x\cup N_y)$ is
  contained in some geodesic segment from $x$ to $y$.
  Hence sub-neighbourhoods $N_x'$ and $N_y'$
  as in the lemma exist. 
\end{proof}

\section{Proof of main results}\label{S_proofs}

In this section, we use Proposition~\ref{P_conf-point} to establish our
main results.

\subsection{Typical points}

To simplify the proofs below, we make use of a set
of {\it typical points} $T\subset\m$ (we slightly abuse notation
by keeping the same notation as in Section~\ref{S_near}).  The set $T$
will satisfy the following.
\begin{enumerate}[nolistsep,label={(\roman*)}]
  \item $\lam(T^c)=0$;
  \item Proposition~\ref{P_key} (and weaker
  	results such as Proposition~\ref{P_conf-point}
  	and Lemmas~\ref{L_LG},\ref{L_at},\ref{L_near})
  	holds for all $x\in T$;
  \item Proposition~\ref{P_reg} holds for all $x\in T$;
  \item Proposition~\ref{P_S123} holds for all $x\in T$;
  \item Proposition~\ref{P_ScapU} holds for all $x\in T$;
  \item For each $x,y\in T$, there is a unique geodesic
  	from $x$ to $y$.
\end{enumerate}
To be precise, when we say above that a proposition holds for all $x\in T$, 
we mean that the property in the proposition, known to hold for
$\lambda$-almost every point, holds for every point of $T$.

The almost sure existence of a set $T$ satisfying (i)--(v) 
follows by invariance under re-rooting (and results
cited or proved thus far).  
We note that property (vi) follows by (iii), 
since as mentioned in Section~\ref{S_networks}, if
$(x,y)$ and $(y,x)$ are regular then there is a unique 
geodesic from $x$ to $y$. 

Hence, in the sections which follow, 
to show that various properties hold 
almost surely for $\lambda$-almost every $x\in\m$,
it suffices to confirm that they hold for points in $T$.

\subsection{Geodesic nets}\label{S_net-pf}

Theorems~\ref{T_net-nwd},\ref{T_net-cts}
follow by Proposition~\ref{P_conf-point}.

\begin{proof}[Proof of Theorem~\ref{T_net-nwd}]
  Let $x,y\in\m$ and $u\in T-\{x,y\}$ be given.
  Proposition~\ref{P_conf-point} provides an (open) neighbourhood $U_u$ of
  $u$ and a point $u_0$ outside $U_u$ so that all geodesics from any
  $v\in U_u$ to either $x$ or $y$ pass through $u_0$.  In particular any
  geodesic $[v,x]$, with $v\in U_u$,  can be written as $[v,u_0] \cup [u_0,x]$.  By the
  choice of $u_0$, replacing the
  second segment by some $[u_0,y]$ gives a geodesic from $v$ to $y$.  The same
  holds with $x,y$ reversed.  Consequently, $G(x)\cap U_u = G(y) \cap U_u$.

  Thus $G(x)$ and $G(y)$ coincide in $\bigcup_{T-\{x,y\}} U_u$.
  Since $T$ is dense and has full measure, the theorem follows.
  \end{proof}

\begin{proof}[Proof of Theorem~\ref{T_net-cts}]
  Let $x\in T$ and a neighbourhood $N$ of $x$ be given. 
  Select $\eps>0$ so that $B(x,2\eps)\subset N$. 
  Let $N'\subset B(x,\eps)$ and $x_0\in B(x,\eps)-N'$
  be as in Proposition~\ref{P_conf-point}. By the choice of $x_0$, 
  for any $y_0\in N^c$ 
  and $x'\in N'$, observe that $y_0\in \geo(x')$ if and only if there 
  is some $y\in B(x,\eps)^c$ and geodesic $[x_0,y]$ so that 
  $y_0\in[x_0,y)$. 
  This condition is independent of $x'$.   
  Hence all $\geo(x')$, $x'\in N'$, coincide on $N^c$.
\end{proof}
  
In support of our conjecture in 
Section~\ref{S_conf-point}, we show
that the union of most geodesic nets is of Hausdorff dimension 1.

\begin{pro}\label{P_netLam}
  Almost surely, there is a subset $\Lambda\subset\m$
  of full volume, $\lam(\Lambda^c)=0$, satisfying 
  $\H\bigcup_{x\in \Lambda}\geo(x)=1$.
\end{pro}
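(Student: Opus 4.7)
The plan is to take $\Lambda$ equal to the typical set $T$ of Section~\ref{S_proofs}, augmented (by a $\lam$-null set) so that in addition $\H\geo(x)=1$ holds for every $x\in\Lambda$. This augmentation is legitimate: Le Gall's description in Section~\ref{S_simple} gives $\H\geo(\rho)=1$, and invariance under re-rooting transfers this property to $\lam$-a.e.\ $x\in\m$. Then $\lam(\Lambda^c)=0$ and, since $\lam$ has full support by Proposition~\ref{P_Dim-U}, $\Lambda$ is dense in $\m$. The lower bound $\H\bigcup_{x\in\Lambda}\geo(x)\ge 1$ is then immediate from $\geo(x_0)\subseteq\bigcup_{x\in\Lambda}\geo(x)$ for any $x_0\in\Lambda$.

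The key idea for the matching upper bound is to reduce the uncountable union to a countable one using the local constancy provided by Theorem~\ref{T_net-cts}. I would fix a countable dense subset $T_0\subset\Lambda$ and set
\[
F_0=\bigcup_{x_0\in T_0}\geo(x_0),
\]
so that $\H F_0=1$ by countable stability of Hausdorff dimension. The claim is that $\geo(x)\subseteq F_0$ for every $x\in\Lambda$. To prove it, fix $x\in\Lambda$ and $n\ge 1$, and apply Theorem~\ref{T_net-cts} to $N=B(x,1/n)$ to obtain a sub-neighbourhood $N_n'\subset B(x,1/n)$ on which $\geo(\cdot)-B(x,1/n)$ is constant, equal to $\geo(x)-B(x,1/n)$. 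Density of $T_0$ yields some $x_n'\in T_0\cap N_n'$, and hence
\[
\geo(x)-B(x,1/n)\;=\;\geo(x_n')-B(x,1/n)\;\subseteq\;\geo(x_n')\;\subseteq\;F_0.
\]
Since $x\notin\geo(x)$, taking the union over $n\ge 1$ collapses the left-hand side to $\geo(x)$, proving the claim. This yields $\bigcup_{x\in\Lambda}\geo(x)\subseteq F_0$ and the desired upper bound.

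There is no serious obstacle here: the proof is essentially a direct packaging of Theorem~\ref{T_net-cts} via a countable exhaustion. The reason Theorem~\ref{T_net-nwd} alone is insufficient is that it only controls $\geo(x)$ and $\geo(y)$ up to a nowhere-dense null set that may depend on the pair, which is too weak to bound an uncountable union. Theorem~\ref{T_net-cts}, by contrast, asserts a genuinely local statement --- the ``far field'' of $\geo(x)$ stabilizes under perturbations of $x$ --- which is precisely the input that allows a countable dense sample to exhaust the entire union over $\Lambda$. This bookkeeping is also what prevents the argument from proving the stronger conjecture $\H F=1$ from Section~\ref{S_conf-point}: the conjecture would require an analogous continuity statement for \emph{all} $x\in\m$, i.e.\ ruling out ghost geodesics.
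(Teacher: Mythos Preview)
Your proof is correct and follows the same overall strategy as the paper: take $\Lambda=T$, invoke Theorem~\ref{T_net-cts} to reduce the uncountable union to a countable one, and conclude via countable stability of Hausdorff dimension. The packaging differs slightly. The paper fixes $\eps>0$, sets $G_\eps(x)=G(x)-B(x,\eps)$, and for each $x\in T$ obtains a ball $B(x,\eta_x)$ on which $G_{2\eps}(\cdot)\subset G_\eps(x)$; it then invokes the strong Lindel\"of property of $(\m,d)$ to extract a countable $T_\eps\subset T$ whose balls still cover $T$, yielding $\bigcup_{x\in T}G_{2\eps}(x)\subset\bigcup_{x\in T_\eps}G_\eps(x)$, and finishes by taking $\eps=1/n$. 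Your version is more elementary: by fixing a single countable dense $T_0\subset\Lambda$ at the outset and arguing pointwise that $G(x)\subset F_0=\bigcup_{T_0}G(x_0)$ for every $x\in\Lambda$, you sidestep the Lindel\"of covering step entirely. Both arguments use identical ingredients; yours just organizes the exhaustion a bit more directly.
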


\begin{proof}
  We prove the claim with $\Lambda=T$, which has full measure.
  
  By property (ii) of points in $T$, 
  there is a confluence of geodesics to all points $x\in T$
  (that is, the statement of Lemma~\ref{L_LG} holds).
  As discussed in Section~\ref{S_net}, 
  we thus have that
  $\H G(x)=1$ for all $x\in T$.
  
  Let $\eps>0$ be given. For each $x\in T$, put
  $\geo_\eps(x)=\geo(x)-B(x,\eps)$.  By Theorem~\ref{T_net-cts}, for each
  $x\in T$ there is an $\eta_{x}\in(0,\eps)$ such that
  $\geo_{2\eps}(x')\subset \geo_{\eps}(x)$ for all $x'\in
  B(x,\eta_x)$. Since $(\m,d)$ is a separable metric space and hence
  strongly Lindel{\"o}f (that is, all open subspaces of $(\m,\d)$ are
  Lindel{\"o}f) there is a countable subset $T_\eps\subset T$ such that
  $\bigcup_{x\in T_\eps} B(x,\eta_x)$ is equal to
  $\bigcup_{x\in T} B(x,\eta_x)$, and in particular, contains $T$. Hence,
  by the choice of $T_\eps$, 
  $\bigcup_{x\in T}\geo_{2\eps}(x)$ is contained in 
  $\bigcup_{x\in T_\eps}\geo_{\eps}(x)$, 
  a countable union of 1-dimensional sets, and so is 1-dimensional.
  
  Taking a countable union over $\eps=1/n$, we see that
  $\H\bigcup_{x\in T}\geo(x)=1$, which yields the claim.
\end{proof}

\subsection{Cut loci}\label{S_cut-pf} 

As discussed in Section~\ref{S_cut}, Le Gall's study of geodesics reveals a
correspondence between cut-points of the CRT and points with multiple
geodesics to the root of the Brownian map.  Hence, Le Gall~\cite{LG10}
states that $S(\rho)$ ``exactly corresponds to the cut locus of [the
Brownian map] relative to the root.''

\subsubsection{Weak cut loci}

The main way in which the weak cut locus is badly behaved is that there is
a dense set of points for which the weak cut locus has positive volume and
full dimension (whereas typically it is much smaller, 
see Proposition~\ref{P_Dim-S}).

\begin{pro}\label{P_mul}
  Almost surely, for $\lam $-almost every $x\in\m$, for any neighbourhood
  $N$ of $x$, there is a set $D$ with $\H D=2$, dense in some neighbourhood
  $N'\subset N$ of $x$, such that $N^c \subset S(x')$ for all $x'\in D$.
\end{pro}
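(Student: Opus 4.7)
The plan is to use Proposition~\ref{P_conf-point} to reduce the problem of controlling $\mul(x')$ for a dense collection of $x'$ near $x$ to the simpler problem of understanding multiple geodesics to a single confluence point. By shrinking $N$ if necessary, I may assume without loss of generality that $N$ is a small open ball, so that $\lam(N^c)>0$. For a typical $x\in T$, Proposition~\ref{P_conf-point} will furnish a sub-neighbourhood $N'\subset N$ and a point $x_0\in N-N'$ through which every geodesic between any $x'\in N'$ and any $y\in N^c$ must pass.

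The first step is to derive a multiplicative formula for the number of geodesics. For $x'\in N'$ and $y\in N^c$, the confluence at $x_0$ gives $\d(x',y)=\d(x',x_0)+\d(x_0,y)$, and hence any geodesic from $x'$ to $y$ decomposes uniquely as the concatenation of a geodesic $[x',x_0]$ with a geodesic $[x_0,y]$; conversely, any such concatenation has length $\d(x',y)$ and is therefore a geodesic. This yields
\[
\#\{\text{geodesics } x'\to y\} \;=\; \#\{\text{geodesics } x'\to x_0\}\cdot\#\{\text{geodesics } x_0\to y\}.
\]
Setting $D=\mul(x_0)\cap N'$, the first factor is at least $2$ for every $x'\in D$, forcing $y\in \mul(x')$ for every $y\in N^c$, i.e., $N^c\subset\mul(x')$.

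It remains to show that $D$ is dense in $N'$ and has Hausdorff dimension $2$. The key idea is to introduce an auxiliary typical point $w\in T\cap N^c$ (which exists since $T$ has full volume and $\lam(N^c)>0$ by Proposition~\ref{P_Dim-U}) and to apply the multiplicative formula with $y=w$. If $\#\{\text{geodesics } x_0\to w\}\ge 2$, then every $x'\in N'$ would have at least two geodesics to $w$, giving $N'\subset\mul(w)$; but then $\lam(\mul(w))\ge\lam(N')>0$, contradicting Proposition~\ref{P_Dim-S} applied to the typical point $w$. Hence $\#\{\text{geodesics } x_0\to w\}=1$, and consequently
\[
\#\{\text{geodesics } x'\to w\} \;=\; \#\{\text{geodesics } x'\to x_0\}\qquad\text{for all } x'\in N',
\]
which gives $\mul(w)\cap N' = \mul(x_0)\cap N' = D$. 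Since $w\in T$, $\mul_2(w)$ is dense in $\m$ by Proposition~\ref{P_S123}(ii), so $D$ is dense in $N'$; and $\H(\mul(w)\cap N')=2$ by Proposition~\ref{P_ScapU}, so $\H D=2$. Both conclusions for $D$ follow.

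The main obstacle is that a priori we have no handle on the cut locus $\mul(x_0)$ of the specific, non-typical confluence point $x_0$. The comparison with the cut locus of a typical $w\in N^c$, combined with the multiplicative formula, is the essential manoeuvre: it simultaneously forces $w\notin\mul(x_0)$ (by the volume contradiction) and identifies $\mul(x_0)\cap N'$ with the well-understood set $\mul(w)\cap N'$.
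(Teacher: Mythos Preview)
Your proof is correct and follows essentially the same route as the paper: both arguments use Proposition~\ref{P_conf-point} to obtain $N'$ and $x_0$, introduce an auxiliary typical point in $N^c\cap T$, and identify $D$ with the intersection of that point's weak cut locus with $N'$. The only minor difference is in how uniqueness of the geodesic from the auxiliary point to $x_0$ is shown---the paper uses property~(vi) directly (the unique geodesic between the two typical points $u$ and $x$ passes through $x_0$, forcing uniqueness of $[u,x_0]$), whereas you deduce it via a volume contradiction; both are valid and yield the same conclusion.
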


\begin{proof}
  Let $x\in T$ and a neighbourhood $N$ of $x$ be given. Let $N'\subset N$
  and $x_0\in N-N'$ be as in Proposition~\ref{P_conf-point}.  Fix some
  $u\in N^c\cap T$, and put $D=N'\cap S(u)$ so that by properties (iv),(v)
  of points in $T$, we have that $D$ is dense in $N'$ and satisfies
  $\H D=2$.  By property (vi) of points in $T$, there is a unique geodesic
  from $u$ to $x$. Since this geodesic passes through $x_0$, it follows
  that there is a unique geodesic from $u$ to $x_0$. Hence, by the choice
  of $D$ and $x_0$, we see that there are multiple geodesics from each
  point $x'\in D$ to $x_0$. We conclude, by the choice of $x_0$,
   that $N^c\subset S(x')$, for all
  $x'\in D$.
\end{proof}

Since the weak cut locus relation is symmetric --- that is,
$y\in S(x)$ if and only if $x\in\mul(y)$ --- we note 
that it follows immediately by Proposition~\ref{P_mul} that 
almost surely, for \emph{all} $x\in\m$, $S(x)$ is dense in $\m$ 
(as mentioned in Section~\ref{S_cut}) and $\H S(x)\ge 2$.

By the proof of Proposition~\ref{P_mul}, we find that 
$S(x)$ does not effectively capture the essence of a 
cut locus of a general point $x\in\m$. Therein, observe that 
although all points $y\in N^c$ are in $S(x')$, $x'\in D$, 
this is due to the structure of the map near $x'$ (namely the multiple
geodesics to the confluence point $x_0$) 
and does not reflect on the map near $y$.  
For this reason, we also define a strong cut locus for the 
Brownian map, see Section~\ref{S_cut}.

\subsubsection{Strong cut loci}

By Le Gall's description of geodesics to the root and
invariance under re-rooting, and in particular
Proposition~\ref{P_reg}, 
we immediately obtain the following:

\begin{pro}\label{P_S=C}
   Almost surely, for $\lam$-almost every $x\in\m$, 
   $\mul(x)=\cut(x)$, that is, the weak and strong cut loci coincide.
\end{pro}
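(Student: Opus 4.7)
The plan is to exploit the fact that Proposition~\ref{P_reg} holds for every $x\in T$ (property (iii) in the definition of typical points) and observe that regularity of all pairs $(x,y)$ already encodes the essence of the strong cut locus property. Since $\cut(x)\subset\mul(x)$ is immediate (two geodesics that are disjoint in a neighbourhood of $y$ are in particular distinct), it suffices to establish the reverse inclusion for $x\in T$.

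So fix $x\in T$ and let $y\in\mul(x)$. By definition there exist two distinct geodesic segments $\g,\g'$ between $x$ and $y$. Since $x\in T$, Proposition~\ref{P_reg} guarantees that $(x,y)$ is regular, which by definition furnishes some $r\in(0,d(x,y))$ such that
\[
\g\cap\g'\cap B(y,r)=\{y\}\quad\text{and}\quad \g-B(y,r)=\g'-B(y,r).
\]
The first identity says that on the neighbourhood $B(y,r)$ of $y$ the two geodesics meet only at the common endpoint $y$, i.e., they are disjoint in a neighbourhood of $y$ in the sense of the definition of the strong cut locus. Hence $y\in \cut(x)$, which gives $\mul(x)\subset \cut(x)$ and completes the proof.

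The argument has no real obstacle: all the work has been done in establishing Proposition~\ref{P_reg} and in packaging typical points so that they inherit the regularity property for every second coordinate. The main conceptual point to highlight is that regularity at $(x,y)$ is precisely what upgrades a pair of distinct geodesics ending at $y$ (witnessing $y\in\mul(x)$) into a pair of geodesics that separate in a neighbourhood of $y$ (witnessing $y\in\cut(x)$); without regularity the two notions genuinely differ, which is what makes the exceptional dense set appearing in Proposition~\ref{P_mul} possible.
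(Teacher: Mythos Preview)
Your proof is correct and follows exactly the approach the paper takes: the paper simply notes that Proposition~\ref{P_S=C} is an immediate consequence of Proposition~\ref{P_reg} (via invariance under re-rooting), and you have spelled out precisely why regularity of $(x,y)$ upgrades $y\in\mul(x)$ to $y\in\cut(x)$. There is nothing to add.
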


We remark that the strong cut locus relation, unlike the 
weak cut locus, is not symmetric in $x$ and $y$, that is, 
$y\in\cut(x)$ does not imply that $x\in\cut(y)$. See Figure~\ref{F_cut}.

\begin{figure}
  \centering
  \includegraphics[scale=0.8]{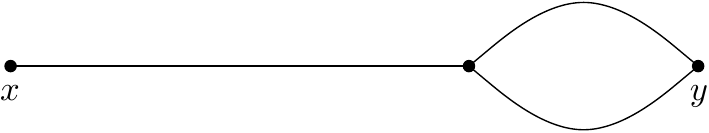}
  \caption{ Asymmetry of the strong cut locus relation: 
  For a regular pair $(x,y)$ joined by two geodesics, we have
  $y\in\cut(x)$, however $x\notin\cut(y)$, since all
    geodesics from $y$ to $x$ coincide near $x$.}
  \label{F_cut}
\end{figure}

Although more in tune with the singular geometry of 
the Brownian map, not all properties of cut loci in smooth manifolds
apply for the Brownian map.
For instance, $\cut(x)$ is much smaller than the \emph{closure} 
of all points with multiple geodesics to $x$ 
(as is the case with the cut locus 
of a smooth surface, see Klingenberg~\cite[Section 2.1.14]{K95})
since the set of such points is dense in $\m$ 
(as noted after the proof of Proposition~\ref{P_mul}). 
Moreover, it is not necessarily the case that all points $y\in\cut(x)$ 
are endpoints relative to $x$ (that is, extremities $y$ 
of a geodesic $[x,y]$ which cannot be extended
to a geodesic $[x,y']\supset[x,y]$ for any $y'\neq y$; 
in other words, $y\notin G(x)$). 
For instance,
if $\gamma,\gamma'$ are distinct 
geodesics from the root of the map $\rho$
to some
point $x$, 
with a common initial segment $[\rho,y]=\gamma\cap\gamma'$,
then note that $y$ is in $\cut(x)$ (by Proposition~\ref{P_reg}), 
however not an endpoint relative to 
$x$, being  in the relative interior of $\g$. 

Despite such differences, we propose that the set $\cut(x)$ 
is a more interesting notion of cut locus in our setting than 
$S(x)$ or, say, the set of \emph{all} endpoints relative to $x$
(that is, $\geo(x)^c-\{x\}$), which by Theorem~\ref{T_frame} is a 
residual subset of the map.

As stated in Section~\ref{S_cut}, analogues of
Theorems~\ref{T_net-nwd},\ref{T_net-cts} hold for the strong cut locus.
The proofs are very similar to those 
of Theorems~\ref{T_net-nwd},\ref{T_net-cts}.

\begin{proof}[Proof of Theorem~\ref{T_cut-nwd}]
  Let $x,y\in\m$ and $u\in T-\{x,y\}$ be given.
  Proposition~\ref{P_conf-point} provides an (open) neighbourhood $U_u$ of
  $u$ and a point $u_0$ outside $U_u$ so that all geodesics from any
  $v\in U_u$ to either $x$ or $y$ pass through $u_0$.  In particular any
  geodesic $[v,u_0]$ can be extended to each of $x,y$.  

  Since $v\in\cut(x)$ is determined by the structure of geodesics $[v,x]$
  near $v$, a point $v\in U_u$ is in $\cut(x)$ if and only if
  $v\in\cut(y)$.  Thus $\cut(x)$ and $\cut(y)$ agree in
  $\bigcup_{u\in T-\{x,y\}} U_u$.  The result follows, since $T$ is
  dense and has full measure.
\end{proof}

\begin{proof}[Proof of Theorem~\ref{T_cut-cts}] 
  Let $x\in T$ and a neighbourhood $N$ of $x$ 
  be given. Let $N'\subset N$ and $x_0\in N-N'$ be as in 
  Proposition~\ref{P_conf-point}. For any $x'\in N'$ and 
  $y\in N^c$, $y\in\cut(x')$ if and only if there are multiple 
  geodesics from $x_0$ to $y$ which are distinct near $y$.
  Since this condition is independent of $x'$, 
  we conclude that all $\cut(x')$, $x'\in N'$, 
  coincide on $N^c$.
\end{proof}

Analogously to Proposition~\ref{P_netLam}, 
we find that  
the union over most strong cut loci is of Hausdorff 
dimension 2.

\begin{pro}\label{P_cutLam}
  Almost surely, there is a subset $\Lambda\subset\m$
  of full volume, $\lam(\Lambda^c)=0$, satisfying 
  $\H\bigcup_{x\in \Lambda}\cut(x)=2$.
\end{pro}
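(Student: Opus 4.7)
The plan is to follow the template of Proposition~\ref{P_netLam} essentially verbatim, taking $\Lambda=T$ (which has full $\lambda$-measure by property~(i)) and replacing the geodesic net $\geo$ by the strong cut locus $\cut$ everywhere, using Theorem~\ref{T_cut-cts} in place of Theorem~\ref{T_net-cts}.

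For the lower bound $\H\bigcup_{x\in T}\cut(x)\ge 2$, I would fix any single $x\in T$: property~(iii) combined with Proposition~\ref{P_S=C} gives $\cut(x)=\mul(x)$, and property~(v) together with Proposition~\ref{P_Dim-S} gives $\H\mul(x)=2$. Hence even a single term of the union already has Hausdorff dimension $2$.

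For the upper bound, fix $\eps>0$ and set $\cut_\eps(x)=\cut(x)-B(x,\eps)$. For each $x\in T$, Theorem~\ref{T_cut-cts} (available by property~(ii)) applied to the neighbourhood $B(x,\eps)$ produces $\eta_x\in(0,\eps)$ such that $\cut(x')-B(x,\eps)$ is the same for all $x'\in B(x,\eta_x)$; taking $x'=x$ identifies this common set with $\cut_\eps(x)$. Since $\eta_x<\eps$, a one-line triangle-inequality check yields the inclusion $\cut_{2\eps}(x')\subset\cut_\eps(x)$ for every $x'\in B(x,\eta_x)$. Because $(\m,d)$ is separable and hence strongly Lindel{\"o}f, I can extract a countable subset $T_\eps\subset T$ with $\bigcup_{x\in T_\eps}B(x,\eta_x)\supset T$, giving
\[
  \bigcup_{x\in T}\cut_{2\eps}(x)\ \subset\ \bigcup_{x\in T_\eps}\cut_\eps(x),
\]
a countable union of sets each of Hausdorff dimension at most $2$ (bounded by $\H\cut(x)=2$). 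Taking a further countable union over $\eps=1/n$, and noting that $\bigcup_n\cut_{2/n}(x)=\cut(x)$ since $x\notin\cut(x)$, yields $\H\bigcup_{x\in T}\cut(x)\le 2$.

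No serious obstacle is anticipated, since Theorem~\ref{T_cut-cts} is precisely parallel to Theorem~\ref{T_net-cts} and the countable extraction argument transfers without change. The only subtle point, compared with the geodesic-net case, is the asymmetry of the strong cut locus relation (see Figure~\ref{F_cut}), which a priori complicates the comparison of $\cut(x')$ and $\cut(x)$; but Theorem~\ref{T_cut-cts} is exactly the statement needed to bypass this issue, making the argument essentially identical to that for Proposition~\ref{P_netLam}.
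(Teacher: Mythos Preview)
Your proposal is correct and follows essentially the same approach as the paper, which simply states that the proof of Proposition~\ref{P_netLam} carries over verbatim upon replacing Theorem~\ref{T_net-cts} by Theorem~\ref{T_cut-cts} and noting $\H\cut(x)=2$ for $x\in T$. You have spelled out precisely the details the paper omits, including the triangle-inequality inclusion $\cut_{2\eps}(x')\subset\cut_\eps(x)$ and the Lindel\"of extraction; the only cosmetic difference is that the paper cites property~(iv) directly for $\H\cut(x)=2$, whereas you route through properties~(iii),~(v) and Proposition~\ref{P_Dim-S}, which amounts to the same thing.
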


\begin{proof}
  The proposition follows by the proof of 
  Proposition~\ref{P_netLam}, but replacing its use of 
  Theorem~\ref{T_net-cts} with that of Theorem~\ref{T_cut-cts},
  and noting, by property (iv) of points in $T$, that
  $\H C(x)=2$
  for all $x\in T$.
  We omit the details.
\end{proof}

It would be interesting to know if almost surely 
$\bigcup_{x\in\m}\cut(x)$ is of Hausdorff dimension 2.

\subsection{Geodesic stars}\label{S_stars} 

A \emph{geodesic star} is a formation of geodesic
segments which share a common endpoint and are 
otherwise pairwise disjoint. Geodesic stars play a 
important role in~\cite{M13}.  While every point is the centre of a
geodesic star with a single ray, almost every point is not the centre of a
star with any more rays.

\begin{defn}
  For $\eps>0$, let $\star(\eps)$ denote the 
  set of points $x\in\m$ such that for some 
  $y,y'\in B(x,\eps)^c$ and geodesic segments $[x,y]$ and 
  $[x,y']$, we have that $(x,y]\cap(x,y']=\emptyset$.
  We call a point in $\star(\eps)$ the \emph{centre of a geodesic $\eps$-star
  with two rays}.
\end{defn}

Note that any point in the interior of a geodesic is in
$\star(\eps)$ for some $\eps>0$, but the converse need not hold.

\begin{pro}\label{P_stars}
  Almost surely, for any $\eps>0$, 
  $\star(\eps)$ is nowhere dense in $\m$.
\end{pro}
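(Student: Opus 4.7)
The plan is to deduce the result directly from the confluence of geodesics at typical points (Proposition~\ref{P_conf-point}). Since the set $T$ of typical points from Section~\ref{S_proofs} has $\lam(T^c)=0$, and $\lam$ has full support by Proposition~\ref{P_Dim-U}, $T$ is dense in $\m$. It therefore suffices to exhibit, for each $\eps>0$ and each $x\in T$, arbitrarily small open neighbourhoods of $x$ disjoint from $\star(\eps)$.

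Given $\eps>0$, $x\in T$, and an open neighbourhood $U$ of $x$, I would choose an open neighbourhood $N$ of $x$ with $N\subset U\cap B(x,\eps/3)$, and apply Proposition~\ref{P_conf-point} to produce a sub-neighbourhood $N'\subset N$ together with a confluence point $x_0\in N-N'$ through which every geodesic segment from any $x'\in N'$ to any $y\in N^c$ must pass. The key observation is then that for any $x'\in N'$ we have $N\subset B(x,\eps/3)\subset B(x',2\eps/3)$, and hence $B(x',\eps)^c\subset N^c$. Consequently, any pair of geodesic segments $[x',y],[x',y']$ with $y,y'\in B(x',\eps)^c$ both pass through $x_0$. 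Since $x'\in N'$ and $x_0\in N-N'$, we have $x_0\neq x'$, so $x_0\in(x',y]\cap(x',y']$, and this intersection is non-empty. Thus $x'\notin\star(\eps)$, which gives $N'\cap\star(\eps)=\emptyset$.

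Combined with the density of $T$, this shows that every non-empty open subset of $\m$ contains a non-empty open set disjoint from $\star(\eps)$, which is precisely the statement that $\star(\eps)$ is nowhere dense. The whole argument takes place on the almost sure event on which $T$ exists with its properties, and applies uniformly in $\eps>0$, so the conclusion holds almost surely for every $\eps>0$ simultaneously. No step presents a serious obstacle; the only care needed is to choose the radius of $N$ small enough (here, $\eps/3$) so that $B(x',\eps)^c\subset N^c$ for every $x'\in N'$, which guarantees that Proposition~\ref{P_conf-point} applies to both endpoints $y,y'$ in the definition of $\star(\eps)$.
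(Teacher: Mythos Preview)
Your proof is correct and takes essentially the same approach as the paper: apply Proposition~\ref{P_conf-point} at a typical point $x\in T$ with $N$ a small ball (the paper uses radius $\eps/2$, you use $\eps/3$), observe that any geodesic from $x'\in N'$ to a point outside $B(x',\eps)$ must pass through the confluence point $x_0\neq x'$, and conclude by density of $T$. Your version is slightly more explicit in verifying $x_0\in(x',y]\cap(x',y']$, but the argument is the same.
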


\begin{proof}
  Let $\eps>0$ and $x\in T$ be given. 
  Put $N=B(x,\eps/2)$. Let $N'\subset N$ and $x_0\in N-N'$ 
  be as in Proposition~\ref{P_conf-point}. Since 
  $N\subset B(x',\eps)$ for all $x'\in N'$, 
  $x_0$ 
  is contained in all geodesic segments of length $\eps$ 
  from points $x'\in N'$. 
  Hence $\star(\eps)\cap N'=\emptyset$. 
  The result thus follows by the density of $T$.
\end{proof}

\begin{proof}[Proof of Theorems~\ref{T_frame},\ref{T_locus}]
  Note that if a point is either in the relative interior of 
  a geodesic or in the strong cut locus
  of a point, then it is the centre of a 
  geodesic $\eps$-star with two rays, for some $\eps>0$.
  Therefore 
  $\bigcup_{x\in\m}\geo(x)$ and
  $\bigcup_{x\in\m}\cut(x)$
  are contained in
  $\bigcup_{n\ge1}\star(n^{-1})$, a set of first Baire 
  category by Proposition~\ref{P_stars}.
  The theorems follow.
\end{proof}

\subsection{Geodesic networks}\label{S_networks-pf} 

In this section, we classify the types of geodesic 
networks which are dense in the Brownian map
and calculate the dimension of the set of pairs
with each type of network.

\begin{proof}[Proof of Theorem~\ref{T_normal}]
  Let $u\neq v\in T$ be given. 
  By property (vi) of points in $T$, there is a unique
  geodesic $[u,v]$.  Put $\eps=\frac13 d(u,v)$.
  By property (ii) of points in $T$, we have
  by Lemma~\ref{L_near} that there is an $\eta>0$ so 
  that if $U=B(u,\eta)$ and $V=B(v,\eta)$, then for any 
  $u'\in U$ and $v'\in V$, any geodesic segment $[u',v']$ 
  coincides with $[u,v]$ outside of 
  $B(u,\eps)\cup B(v,\eps)$.
  
  Let $z$ denote the midpoint of $[u,v]$.
  By the choice of $\eta$ and since $u\in T$, 
  we have by properties (iii),(iv) for points in $T$
  that for all $v'\in V$, the pair $(z,v')$ is 
  regular and joined by at most three geodesics.
  Hence we split $V = V_1\cup V_2\cup V_3$, 
  where $V_k$ consists of $v'\in V$ for which 
  $(z,v')\in N(1,k)$. Similarly, we decompose 
  $U=U_1\cup U_2 \cup U_3$ according to the 
  number of geodesics between $z$ and $u'\in U$. 
  Since $u,v\in T$, 
  we see by property (iv) of points in $T$ that
  all $U_j,V_k$ are dense in $U,V$. 
  
  Finally, by the choice of 
  $\eta$, observe that $U_j\times V_k\subset\net(j,k)$, 
  for all $j,k\in\{1,2,3\}$. Hence, parts (i),(ii) 
  of the theorem follow by the density of $T$.
\end{proof}

For the proof of Theorem~\ref{T_normal-dim},
we require the following result concerning the dimension 
of cartesian products in arbitrary metric spaces.

\begin{lem}[{Howroyd~\cite{H95,H96}}]\label{L_dimx}
  For any metric spaces $X,Y$ we have that
  \begin{enumerate}[nolistsep,label={(\roman*)}]
  \item $(\H X) + (\H Y) \le \H (X\times Y)$;
  \item $\pack (X\times Y) \le (\pack X) + (\pack Y)$,
  \end{enumerate}
  where the metric on $X\times Y$ is the $L^1$ metric on the product.
\end{lem}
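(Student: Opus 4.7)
The plan is to prove the two inequalities separately using Frostman-type mass distribution arguments adapted to general metric spaces.

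For part (i), the strategy is to invoke a Frostman lemma in the metric space setting (which is the core of Howroyd's contribution) to produce measures certifying the dimensions of $X$ and $Y$, and then to take a product. Fix $s < \H X$ and $t < \H Y$, and obtain Borel probability measures $\mu$ on (a compact subset of) $X$ and $\nu$ on $Y$ together with constants $C_1,C_2,r_0 > 0$ such that
\[
\mu(B(x,r)) \le C_1 r^s, \quad \nu(B(y,r)) \le C_2 r^t
\]
for all $x \in X$, $y \in Y$ and $r \in (0,r_0)$. On $X \times Y$ equipped with the $L^1$ product metric, one has $B((x,y),r) \subset B(x,r) \times B(y,r)$, so the product measure $\mu\otimes\nu$ satisfies $(\mu\otimes\nu)(B((x,y),r)) \le C_1 C_2\, r^{s+t}$. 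The mass distribution principle then yields $\H(X\times Y) \ge s+t$, and letting $s \uparrow \H X$ and $t \uparrow \H Y$ gives (i).

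For part (ii), I would use the characterization
\[
\pack A = \inf\Bigl\{ \sup_i \uM A_i : A = \bigcup_{i\ge 1} A_i \Bigr\}
\]
together with the elementary sub-additivity of upper Minkowski dimension on products: if $X$ admits a cover by $N_X(r)$ balls of radius $r$ and $Y$ by $N_Y(r)$ such balls, then the cartesian products of these balls cover $X \times Y$ by $N_X(r)N_Y(r)$ sets of $L^1$-diameter at most $2r$, whence $\uM(X\times Y) \le \uM X + \uM Y$. Given $\eta>0$, choose countable decompositions $X = \bigcup_i X_i$ and $Y = \bigcup_j Y_j$ with $\uM X_i \le \pack X + \eta$ and $\uM Y_j \le \pack Y + \eta$. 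The decomposition $X \times Y = \bigcup_{i,j} X_i \times Y_j$ together with the sub-additivity above shows that $\pack(X\times Y) \le \pack X + \pack Y + 2\eta$, and letting $\eta \downarrow 0$ concludes.

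The main obstacle is part (i). In Euclidean space the required Frostman lemma is classical and follows from a dyadic cube construction, but neither dyadic cubes nor the usual geometric tools are available in an arbitrary metric space. Overcoming this is precisely Howroyd's insight: Frostman measures on a general compact metric space of positive $\mathcal{H}^s$-measure must be produced by a delicate inductive construction on covering radii. Once that result is taken as a black box, the product-measure argument for (i) is routine, and part (ii) is elementary given the decomposition characterization of packing dimension.
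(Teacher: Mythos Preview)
The paper does not give its own proof of this lemma: it is stated with attribution to Howroyd~\cite{H95,H96} and used as an external black box in the proof of Theorem~\ref{T_normal-dim}. There is therefore nothing in the paper to compare your argument against.

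That said, your sketch is a correct outline of the standard route, and in particular of Howroyd's own approach. For (i) you correctly identify that the entire difficulty lies in the general metric-space Frostman lemma; once that is granted, the product-measure plus mass-distribution argument is routine, exactly as you describe. For (ii), your use of the modified upper-box-dimension characterisation $\pack A=\inf\{\sup_i\uM A_i:A=\bigcup_i A_i\}$ together with the elementary covering-number inequality $N_{X\times Y}(r)\le N_X(r)N_Y(r)$ is the standard and correct argument. One small point worth making explicit in (ii): the inequality $\uM(X_i\times Y_j)\le\uM X_i+\uM Y_j$ as you state it requires the pieces $X_i,Y_j$ to be totally bounded (otherwise the covering numbers are infinite); this is harmless since one may always refine any countable decomposition into bounded pieces without increasing the supremum of upper Minkowski dimensions.
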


\begin{proof}[Proof of Theorem~\ref{T_normal-dim}]
  Let $u\neq v\in T$ and $U_j,V_k$, $j,k\in\{1,2,3\}$, be as in the proof
  of Theorem~\ref{T_normal}.  Since $u,v\in T$, we have by properties
  (iv),(v) of points in $T$ that for all $j,k\in\{1,2,3\}$,
  $\H U_j=\pack U_j=2(3-j)$, $\H V_k=\pack V_k=2(3-k)$, and moreover, 
  the sets $U_3,V_3$ are countable.
  
  Recall that in the proof of Theorem~\ref{T_normal}, it is shown that for
  all $j,k\in\{1,2,3\}$, $U_j\times V_k\subset N(j,k)$.  We thus obtain the
  lower bounds $\H N(j,k)\ge 2(6-j-k)$ by 
  Lemma~\ref{L_dimx}(i).  In
  particular, since $\H A \le \pack A$, we obtain
  $8\leq \H N(1,1)\leq \pack N(1,1)\leq \pack M^2\leq 8$, 
  where the  last
  inequality follows by Proposition~\ref{P_Dim-U} and
  Lemma~\ref{L_dimx}(ii).  Hence, we find that
  $\dim N(1,1)=\pack N(1,1)=8$.

  It remains to give an upper bound on the dimensions 
  of $N(j,k)$ when
  $j,k$ are not both $1$, in which case 
  the complement of the geodesic
  network $G(x,y)$ is disconnected.  
  By symmetry, we assume $j\neq 1$, so
  that there are multiple geodesics leaving $x$.  
  Let $[x',y']$ be the
  intersection of all geodesics $[x,y]$.  
  (If $k=1$, then we have that $y'=y$.)

  Fix a countable, dense subset $T_0\subset T$.  Take some $x_0\in T_0$ in
  a component $U_x$ of $G(x,y)^c$ whose closure contains
  $x$ but not $[x',y']$. (See
  Figure~\ref{F_normal-dim}.)  By
  the Jordan Curve Theorem and the choice of $[x',y']$, 
  for any geodesic $[x_0,y]$
  we have that $[x_0,y]-U_x$ is contained in some geodesic
  from $x$ to $y$, and in particular, contains $[x',y']$.  
  Since $x_0$ is
  typical, by property (ii) of points in $T$, 
  we have that all sub-segments of all geodesics
  $[x_0,y]$ are stable.
  Let $z$ denote the midpoint of $[x',y']$.
   Note that, in particular, $[x',z]\subset[x',y']$ and
  $[z,y']\subset[x',y']$ are stable. 

  \begin{figure}
    \centering
    \includegraphics[scale=0.8]{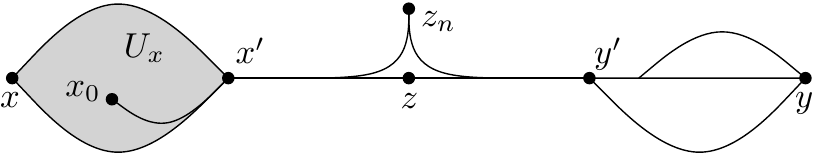}
    \caption{Theorem~\ref{T_normal-dim}:
    As depicted, $(x,y)\in N(2,3)$.  A typical point
      $x_0\in U_x$ 
      gives normal geodesics $[x_0,y]$. For 
      some $z_n\in T_0$ sufficiently close
      to $z$, we have that $(z_n,x)\in N(1,2)$ and
      $(z_n,y)\in N(1,3)$, and hence
      $(x,y)\in S_2(z_n)\times S_3(z_n)$.}
    \label{F_normal-dim}
  \end{figure}

  Take a sequence of points $z_n\in T_0$ converging to $z$.  
  Any subsequential limit of geodesics $[x,z_n]$ converges to
  some geodesic $[x,z]$, which, by the choice of $[x',y']$,
   contains $[x',z]$. Since
  $[x',z]$ is stable,
  for large enough $n$ the geodesics
  $[x,z_n]$ intersect $[x',z]$, and therefore (viewing $[x,z_n]$
  as parametrized from $x$ to $z_n$) necessarily  
  coincide with one of the
  geodesics $[x,x']$,
   and then continue along $[x',y']$ before branching off
  towards $z_n$.  It follows that 
  for such $n$, we have that $(x,z_n) \in N(j,1)$.
  Similarly, since $[z,y']$ is stable, for large enough $n$ the geodesics
  $[z_n,y]$ all go through $y'$, and hence $(z_n,y) \in N(1,k)$.

  By property (iii) of points in $T$, we note that for 
  any $u\in T$ and $i\in\{1,2,3\}$, $S_i(u)$ (as defined in 
  Section~\ref{S_dims}) is equal to $\{v : (u,v) \in N(1,i)\}$.
  Furthermore, by properties (iv),(v) of points in $T$, 
  we have that 
  $\pack S_i(u) = 6-2i$, and moreover, $S_3(u)$ is countable.
  
  The above argument
  shows that for every $(x,y) \in N(j,k)$ we have 
  that $(z_n,x)\in N(1,j)$ and
  $(z_n,y)\in N(1,k)$ for some $z_n\in T_0$.  Thus
  \[
    N(j,k) \subset \bigcup_{u\in T_0} S_i(u)\times S_j(u).
  \]
  Therefore, since $T_0$ is countable, we see by 
  Lemma~\ref{L_dimx}(ii) that
  $\pack N(j,k) \leq (6-2j)+(6-2k)$, 
  giving the requisite upper bound. 
  Moreover, we find that 
  $N(3,3)$ is countable. 
  
  Altogether, since $\H A\le \pack A$, we conclude that 
  $N(j,k)$ has Hausdorff and packing dimension
$2(6-j-k)$.
\end{proof}

\begin{proof}[Proof of Corollaries~\ref{T_pairs},\ref{T_pairs-dim}]
  Noting that $ N(j,k)\subset\pair(jk)$, 
  for all $j,k\in\NN$, we observe that 
  Theorems~\ref{T_normal},\ref{T_normal-dim} 
  immediately yield 
  Corollaries~\ref{T_pairs},\ref{T_pairs-dim}.
\end{proof}

\section{Related models}\label{S_models}

Our results have implications for the geodesic structure
of models related to the Brownian map.

An infinite volume version of the Brownian map, 
the \emph{Brownian plane} $(P,D)$, has been introduced 
by Curien and Le Gall~\cite{CLG13}. The random metric 
space $(P,D)$ is homeomorphic to the plane $\RR^2$ and 
arises as the local Gromov-Hausdorff scaling limit of the 
UIPQ (discussed in Section~\ref{S_net}). 
The Brownian plane has an additional scale invariance 
property which makes it more amenable to analysis, see 
the recent works of Curien and Le Gall~\cite{CLG14a,CLG14b}. 
As discussed in~\cite{LG14}, almost surely there are 
isometric neighbourhoods of the roots of $(\m,\d)$ and 
$(P,D)$. Using this fact and scale invariance, properties 
of the Brownian plane can be deduced from 
those of the Brownian map.

In a series of works, Bettinelli~\cite{B10,B12,B16} 
investigates \emph{Brownian surfaces} of positive genus. 
In~\cite{B10} subsequential Gromov-Hausdorff 
convergence of uniform random bipartite quadrangulations 
of the $g$-torus ${\mathbb T}_g$ is established (also 
general orientable surfaces with a boundary are analyzed 
in~\cite{B16}), and it is an ongoing work of 
Bettinelli and Miermont~\cite{BM15a,BM15b} to confirm
that a unique scaling limit exists. Some properties hold 
independently of which 
subsequence is extracted. For instance, a scaling limit of 
bipartite quadrangulations of ${\mathbb T}_g$ is homeomorphic 
to ${\mathbb T}_g$ (see~\cite{B12}) and has Hausdorff dimension 
4 (see~\cite{B10}). Also, a confluence of geodesics is observed 
at typical points of the surface (see~\cite{B16}). Our results imply 
further properties of geodesics in such surfaces, 
although in these settings there are additional technicalities 
to be addressed.

\section*{Acknowledgements}

OA and BK thank GM and UMPA for support and hospitality during a visit to
ENS Lyon, when this project was initiated.  We thank the Isaac Newton
Institute for Mathematical Sciences, Cambridge, where this project was
completed during the program ``Random Geometry'', supported by EPSRC Grant
Number EP/K032208/1.  We also thank the referee for a careful reading and
suggestions for improvements.  OA was supported by NSERC of Canada and the Simons
Foundation.  BK was supported by NSERC of Canada, Killam Trusts and a Michael Smith
Foreign Study Supplement.  GM was supported by 
 the Grant ANR-14-CE25-0014 (ANR GRAAL).

\providecommand{\bysame}{\leavevmode\hbox to3em{\hrulefill}\thinspace}
\providecommand{\MR}{\relax\ifhmode\unskip\space\fi MR }
\providecommand{\MRhref}[2]{%
  \href{http://www.ams.org/mathscinet-getitem?mr=#1}{#2}
}
\providecommand{\href}[2]{#2}

\end{document}